\theoremstyle{plain}
\newtheorem{theorem}{Theorem}[section]
\newtheorem{satz}[theorem]{Proposition}
\newtheorem{lemma}[theorem]{Lemma}
\newtheorem{mainlemma}[theorem]{Main Lemma}
\newtheorem{corollary}[theorem]{Corollary}
\theoremstyle{definition}
\newtheorem{remark}[theorem]{Remark}
\newtheorem{definition}[theorem]{Definition}
\newtheorem{notation}[theorem]{Convention}
\newtheorem{remcor}[theorem]{Remark and Corollary}
\crefname{section}{Section}{Sections}
\crefname{subsection}{Section}{Sections}
\crefname{theorem}{Theorem}{Theorems}
\crefname{satz}{Proposition}{Propositions}
\crefname{lemma}{Lemma}{Lemmata}
\crefname{mainlemma}{Main Lemma}{Main Lemmata}
\crefname{corollary}{Corollary}{Corollaries}
\crefname{remark}{Remark}{Remarks}
\crefname{definition}{Definition}{Definitions}
\crefname{conjecture}{Conjecture}{Conjectures}
\crefname{figure}{Figure}{Figures}
\crefname{remcor}{Remark and Corollary}{Remarks and Corollaries}
\crefname{notation}{Convention}{Conventions} 
\newcommand{\convexpath}[2]{
[   
    create hullnodes/.code={
        \global\edef\namelist{#1}
        \foreach [count=\counter] \nodename in \namelist {
            \global\edef\numberofnodes{\counter}
            \node at (\nodename) [draw=none,name=hullnode\counter] {};
        }
        \node at (hullnode\numberofnodes) [name=hullnode0,draw=none] {};
        \pgfmathtruncatemacro\lastnumber{\numberofnodes+1}
        \node at (hullnode1) [name=hullnode\lastnumber,draw=none] {};
    },
    create hullnodes
]
($(hullnode1)!#2!-90:(hullnode0)$)
\foreach [
    evaluate=\currentnode as \previousnode using \currentnode-1,
    evaluate=\currentnode as \nextnode using \currentnode+1
    ] \currentnode in {1,...,\numberofnodes} {
-- ($(hullnode\currentnode)!#2!-90:(hullnode\previousnode)$)
  let \p1 = ($(hullnode\currentnode)!#2!-90:(hullnode\previousnode) - (hullnode\currentnode)$),
    \n1 = {atan2(\x1,\y1)},
    \p2 = ($(hullnode\currentnode)!#2!90:(hullnode\nextnode) - (hullnode\currentnode)$),
    \n2 = {atan2(\x2,\y2)},
    \n{delta} = {-Mod(\n1-\n2,360)}
  in 
    {arc [start angle=\n1, delta angle=\n{delta}, radius=#2]}
}
-- cycle
}
\tikzset{
  biggerArrow/.style={
    decoration={markings,mark=at position 1 with {\arrow[scale={1.5}]{>}}},
    postaction={decorate},
    shorten >=0.4pt},
  } 
\newcommand{\prd}[1]{\Pi_{#1}}
\newcommand{\sm}[1]{\Sigma \left(#1\right).\,}
\newcommand{\smonlybig}[1]{\Sigma \big(#1\big).\,}
\newcommand{\lam}[1]{\lambda #1 .}
\newcommand{\jdeq}{\equiv}
\newcommand{\defeq}{\vcentcolon\equiv}
\newcommand{\eqvsym}{\simeq}
\newcommand{\eqv}[2]{\ensuremath{#1 \eqvsym #2}\xspace}
\newcommand{\eqvsymspace}{\enspace \eqvsym \enspace}
\newcommand{\eqvspace}[2]{\ensuremath{#1 \eqvsymspace #2}\xspace}
\newcommand{\id}[3][]{\ensuremath{#2 =_{#1} #3}\xspace}
\newcommand{\idtype}[3][]{\ensuremath{\mathsf{Id}_{#1}(#2,#3)}\xspace}
\newcommand{\refl}[1]{\ensuremath{\mathsf{refl}_{#1}}\xspace}
\newcommand{\ct}{
  \mathchoice{\mathbin{\raisebox{0.5ex}{$\displaystyle\centerdot$}}}
             {\mathbin{\raisebox{0.5ex}{$\centerdot$}}}
             {\mathbin{\raisebox{0.25ex}{$\scriptstyle\,\centerdot\,$}}}
             {\mathbin{\raisebox{0.1ex}{$\scriptscriptstyle\,\centerdot\,$}}}
}
\newcommand{\isequiv}{\ensuremath{\mathsf{isequiv}}}
\newcommand{\UU}{\ensuremath{\mathcal{U}}\xspace}
\newcommand{\N}{\ensuremath{\mathbb{N}}\xspace}
\newcommand{\emptyt}{\ensuremath{\mathbf{0}}\xspace}
\newcommand{\unit}{\ensuremath{\mathbf{1}}\xspace}
\newcommand{\bool}{\ensuremath{\mathbf{2}}\xspace}
\newcommand{\iscontr}{\ensuremath{\mathsf{isContr}}}
\newcommand{\isprop}{\ensuremath{\mathsf{isProp}}}
\newcommand{\isset}{\ensuremath{\mathsf{isSet}}}
\def\compare#1#2#3#4{\if#1#3\if#2#41\else0\fi\else0\fi}
\newcommand{\istype}[1]{
  \edef\a{\compare-2#1\empty\empty}
  \if\a1 \iscontr \else
  \edef\b{\compare-1#1\empty\empty}
  \if\b1 \isprop \else
  \edef\c{#1}
  \if0\c \isset \else
  \mathsf{is}\mbox{-}{#1}\mbox{-}\mathsf{type} \fi\fi\fi
}
\newcommand{\Sn}{\mathbb{S}}
\newcommand{\trunc}[2]{\mathopen{}\left\Vert #2\right\Vert_{#1}\mathclose{}}
\newcommand{\tproj}[3][]{\mathopen{}\left|#3\right|_{#2}^{#1}\mathclose{}}
\newcommand{\bproj}[1]{\tproj{}{#1}}
\newcommand{\proptrunc}[1]{\trunc{}{#1}}
\newcommand{\ordinal}[1]{[\mathsf{#1}]} 
\newcommand{\incrmapplain}{\mathrel{\mathrlap{\hspace{2pt}\raisebox{4pt}{\scalebox{0.7}{$+$}}}\mathord{\rightarrow}}}
\newcommand{\incrmap}[2]{\ordinal{#1} \incrmapplain \ordinal{#2}}
\newcommand{\fst}{\mathsf{fst}}
\newcommand{\snd}{\mathsf{snd}}
\newcommand{\choice}[1]{\ensuremath{\mathsf{AC}_{#1}}\xspace}
\newcommand{\infchoice}{\choice \infty}
\newcommand{\const}{\mathsf{const}\xspace}
\newcommand{\cohcond}{\mathsf{coh}}
\newcommand{\implref}[2]{\ensuremath{(\ref{#1}) \! \Rightarrow \! (\ref{#2})}}
\newcommand{\deltop}{\ensuremath{\Delta_+^\mathrm{op}}}
\newcommand{\deltplus}{\ensuremath{\Delta_+}}
\renewcommand{\lim}{\mathsf{lim}\xspace}
\newcommand{\ttfc}{\mathfrak C}
\newcommand{\ccat}{\widehat{\Delta}_+}
\newcommand{\opccat}{\widehat{\Delta}_+^\mathrm{op}}
\newcommand{\cob}[2]{c^{#2}_{\ordinal {#1}}}
\newcommand{\fibslice}[2]{\left( {#1} \slash {#2} \right)_{\mathsf f} }
\newcommand{\proj}{\mathsf{proj}}
\newcommand{\apoint}{\mathfrak {a_0}}
\newcommand{\Aa}[1]{\mathcal T\mkern-6mu{#1}}
\renewcommand{\AA}[1]{\widehat{\mathcal T}\mkern-6mu{#1}}
\newcommand{\Ee}[1]{\mathcal E\mkern-4mu{#1}}
\newcommand{\EE}[1]{\widehat{\mathcal E}\mkern-4mu{#1}}
\newcommand{\Nn}{\mathcal N}
\newcommand{\NN}{\widehat\Nn}
\newcommand{\Pp}{\mathcal P}
\newcommand{\coskel}[3]{\mathsf{coskel}^{\ordinal{#1},{#2}}_{#3}}
\newcommand{\oppo}[1]{#1^\mathrm{op}}
\newcommand{\toomega}{\xrightarrow{\raisebox{-1pt}{$\scriptstyle \omega$}}} 
\newcommand{\ton}[1]{\xrightarrow{\raisebox{-1pt}{$\scriptstyle {\ordinal {#1}}$}}} 
\newcommand{\fib}{\twoheadrightarrow}
\newcommand{\fibre}{\twoheadleftarrow}
\newcommand{\trivfib}{\mathrel{\mathrlap{\hspace{2.5pt}\raisebox{4pt}{$\scriptstyle{\sim}$}}\mathord{\twoheadrightarrow}}}
\newcommand{\trivfibre}{\mathrel{\mathrlap{\hspace{5pt}\raisebox{4pt}{$\scriptstyle{\sim}$}}\mathord{\twoheadleftarrow}}}
\newcommand{\trivcofib}{\mathrel{\mathrlap{\hspace{2.5pt}\raisebox{4pt}{$\scriptstyle{\sim}$}}\mathord{\rightarrowtail}}}
\newcommand{\arrowsim}{\mathrel{\mathrlap{\hspace{2.5pt}\raisebox{4pt}{$\scriptstyle{\sim}$}}\mathord{\rightarrow}}}
\newcommand{\idmorph}[1]{\mathsf{id}_{#1}}
\newcommand{\truncCat}[1]{ \llparenthesis {#1} \rrparenthesis } 
\newcommand{\I}{\mathfrak{I}}
\newcommand{\repr}[1]{\mathsf{Rep}(#1)}
\title{The General Universal Property of the Propositional Truncation}
\author{Nicolai Kraus}
\thanks{This work was supported by the Engineering and Physical Sciences Research Council (EPSRC), grant reference EP/M016994/1.}
\begin{document}

\begin{abstract}
In a type-theoretic fibration category in the sense of Shulman (representing a dependent type theory with at least $\unit$, $\Sigma$, $\Pi$, and identity types), we define the type of \emph{coherently constant} functions $A \toomega B$.
This involves an infinite tower of coherence conditions, and we therefore need the category to have Reedy limits of diagrams over $\oppo\omega$.
Our main result is that, if the category further has propositional truncations and satisfies function extensionality, the type of coherently constant function is equivalent to the type $\proptrunc  A \to B$. 

If $B$ is an $n$-type for a given finite $n$, the tower of coherence conditions becomes finite and the requirement of nontrivial Reedy limits vanishes. 
The whole construction can then be carried out in (standard syntactical) homotopy type theory and generalises the universal property of the truncation. 
This provides a way to define functions $\proptrunc  A \to B$ if $B$ is not known to be propositional, and it streamlines the common approach of finding a propositional type $Q$ with $A \to Q$ and $Q \to B$.
\end{abstract}

\maketitle

\section{Introduction}

In homotopy type theory (HoTT), we can \emph{truncate} (\emph{propositionally} or \emph{(-1)-truncate}, to be precise) a type $A$ to get a type $\proptrunc A$ witnessing that $A$ is inhabited without revealing an inhabitant \cite[Chapter 3.7]{HoTTbook}.
This operation roughly corresponds to the \emph{bracket types}~\cite{awodeyBauer_bracketTypes} of extensional Martin-L\"of Type Theory, and to the \emph{squash types}~\cite{nuprl} of NuPRL.

The type $\proptrunc A$ is always propositional, meaning that any two of its inhabitants are equal, and its universal property states that functions $\proptrunc A \to B$ correspond to functions $A \to B$, provided that $B$ is propositional. In particular, we always have a canonical map ${\bproj -}_A : A \to \proptrunc  A$.
This definition is natural and elegant, essentially making the truncation operation a reflector of the subcategory of propositions.
Unfortunately, it can be rather tricky to define a function $\proptrunc A \to B$ if $B$ is not known to be propositional.

One possible way to understand the propositional truncation is to think of elements of $\proptrunc A$ as \emph{anonymous} inhabitants of $A$, with the function ${\bproj -}_A $ hiding the information which concrete element of $A$ one actually has.
With this in mind, let us have a closer look at the mentioned universal property of the propositional truncation, or equivalently, at its elimination principles. 
If we want to find an inhabitant of $\proptrunc A \to B$ and $B$ is propositional, then a function $f : A \to B$ is enough.
A possible interpretation of this fact is that $f$ cannot take different values for different inputs, because $B$ is propositional, justifying that $f$ does (in a certain sense) not have to ``look at'' its argument, such that an anonymous argument is enough.
Note that, we only think of internal properties here. When it comes to computation, the \emph{term} $f$ can certainly behave differently if applied on different \emph{terms} of type $A$.

This thought suggests that, in order to construct and inhabitant of $\proptrunc A \to B$ if $B$ is not necessarily propositional, we need to put a condition on the function $f$ to make sure that it does not distinguish between different inputs.
In other words, we expect that $f$ is required to satisfy some form of \emph{constancy}.
The obvious first try would be to ask for an inhabitant of 
\begin{equation} \label{eq:const-def}
 \const_f \defeq \prd{a^1 a^2 : A} \id{f(a^1)}{f(a^2)}, 
\end{equation}
where we write $=$ for the identity type as it has become standard in HoTT.
The assumption~\eqref{eq:const-def} suffices to derive a function $\proptrunc A \to B$ if we in addition know that $B$ is a set (also called \emph{h-set}, or said to  have \emph{unique identity proofs}). 
As a central concept of HoTT is that the identity type is not always propositional, 
it is not surprising that~\eqref{eq:const-def} 
generally only solves the problem if this additional requirement on $B$ is fulfilled.
If we have a proof that two elements of a type are equal, it will very often matter in which way they are equal. 
Thus, the naive statement that $f$ maps any two points to equal values is usually too weak to construct a map out of the propositional truncation. 
This problem has been studied before~\cite{krausEscardoEtAll_existence,nicolai:thesis}.

Given a function $f:A \to B$ and a proof $c : \const_f$ of weak constancy, we can ask whether the paths (identity proofs) that $c$ gives are well-behaved in the sense that they fit together. Essentially, if we use $c$ to construct two inhabitants of $\id{f(a_1)}{f(a_2)}$, then those inhabitants should be equal.
If we know this, we can weaken the condition that $B$ is a set to the condition that $B$ is a \emph{groupoid} (i.e.\ $1$-truncated), and still construct a function $\proptrunc A \to B$.
This, and the (simpler) case that $B$ is a set as described above, are presented as \cref{satz:special-case-0,satz:special-case-1} in \cref{sec:motivation}. In principle, we could go on and prove the corresponding statement for the case that $B$ is $2$, $3$, \ldots-truncated, each step requiring one additional coherence assumption. 
Unfortunately, handling long sequences of coherence conditions in the direct syntactic way becomes rather unpleasant very quickly.

A setting in which we can deal nicely with such towers of conditions was given by Shulman~\cite{shulman_inversediagrams}, who makes precise the idea that type-theoretic contexts (or ``nested $\Sigma$-types'') correspond to diagrams over inverse categories of a certain shape.
Although we do not require the main result (the construction of univalent models and several applications) of~\cite{shulman_inversediagrams}, we make use of the framework and technical results.
Working in a type-theoretic fibration category in the sense of Shulman, we can further consider the case that this category has Reedy $\oppo\omega$-limits, 
that is, limits of infinite sequences $A_1 \fibre A_2 \fibre A_3 \fibre \ldots$, where every map is a fibration (projection). 
We can think of those limits as ``infinite contexts'' or ``$\Sigma$-types with infinitely many $\Sigma$-components''. 
If these Reedy limits exist, we can formulate the type of \emph{coherently constant} functions from $A$ to $B$, for which we write $A \toomega B$.
We show that such a coherently constant function allows us to define a function $\proptrunc A \to B$, even if $B$ is not known to be $n$-truncated for any finite $n$. 
Even stronger, the type $A \toomega B$ is homotopy equivalent to the type $\proptrunc A \to B$, in the same way as $A \to B$ is equivalent to $\proptrunc A \to B$ under the very strict assumption that $B$ is propositional.

The syntactical version of HoTT as presented in the standard reference~\cite[Appendix A.2]{HoTTbook} does not have (or is at least not expected to have) Reedy $\oppo \omega$-limits.
However, if we consider an $n$-truncated type $B$ for some finite fixed number $n$, then all but finitely many of the coherence conditions captured by $A \toomega B$ become trivial, and that type can be simplified to a finitely nested $\Sigma$-type for which we will write $A \ton {n+1} B$. 
It can be formulated in the syntax of HoTT where we can then prove that, for any $A$ and any $n$-truncated $B$, the type $A \ton {n+1} B$ is equivalent to $\proptrunc  A \to B$.
We thereby generalise the usual universal property of the propositional truncation~\cite[Lemma 7.3.3]{HoTTbook}, because if $B$ is not only $n$-truncated, but propositional, then $A \ton {n+1} B$ can be reduced to $A \to B$ simply by removing contractible $\Sigma$-components.
From the point of view of the standard syntactical version of HoTT, an application of our construction is therefore be the construction of functions $\proptrunc A \to B$ for the case that $B$ is not propositional. 
The usual approach for this problem is to construct a propositional type $Q$ such that $A \to Q$ and $Q \to B$ (see \cite[Chapter 3.9]{HoTTbook}). 
Our construction can be seen as a uniform construction of such a $Q$, since the equivalence $(A \ton {n+1} B) \eqvsym (\proptrunc A \to B)$ 
is proved by constructing a suitable ``contractible extension'' of $A \ton {n+1} B$; the general strategy is to ``expand and contract'' type-theoretic expressions, as we strive to explain with the help of the examples in \cref{sec:motivation}.

Nevertheless, we want to stress that we consider the correspondence between $A \toomega B$ and $\proptrunc A \to B$ in a type-theoretic fibration category with Reedy $\oppo\omega$-limits our main result, and the finite special cases described in the previous paragraph essentially fall out as a corollary.
In fact, we think that Reedy $\oppo \omega$-limits are a somewhat reasonable assumption.
Recently, it has been discussed regularly how these or similar concepts can be introduced into syntactical type theory (for example, see the blog posts by Shulman~\cite{shulman:eating} and Oliveri~\cite{oliveri:formalisedInterpreter} with the comments sections, and the discussion on the HoTT mailinglist~\cite{hott:mailinglist} titled ``Infinitary type theory''). 
Motivations are the question whether HoTT can serve as its own meta-theory, whether we can write an interpreter for HoTT in HoTT, and related questions problems such as the definition of semi-simplicial types~\cite{herbelin_semisimpl}. 
Moreover, a concept that is somewhat similar has been suggested earlier as ``very dependent types''~\cite{Hickey96formalobjects}, even though this suggestion was made in the setting of NuPRL~\cite{nuprl}.

As one anonymous reviewer has pointed out, 
our main result (\cref{thm:result}) can be seen as a type-theoretic, constructive version of Proposition 6.2.3.4 in Lurie's \emph{Higher Topos Theory}~\cite{lurie_topos}. 
This seems to suggest once more that many connections between type theory and homotopy and topos theory are unexplored until now.
The current author has yet to understand the results by Lurie and the precise relationship.

\vspace*{0.2cm}

\subparagraph*{\textbf{Contents.}}
We first discuss the cases that the codomain $B$ is a set or a groupoid, as described in the introduction, in \cref{sec:motivation}. This provides some intuition for our general strategy of proving a correspondence between coherently constant functions and maps out of propositional truncations. In particular, we describe how the method of ``adding and removing contractible $\Sigma$-components'' for proving equivalences can be applied.
In \cref{sec:ttfcs}, we briefly review the notion of a type-theoretic fibration category, of an inverse category, and, most importantly, constructions related to Reedy fibrant diagrams, as described by Shulman~\cite{shulman_inversediagrams}.
Some simple observations about the restriction of diagrams to subsets of the index categories are recorded in \cref{sec:subdiagrams}.
We proceed by defining the \emph{equality diagram} over a given type for a given inverse category in \cref{sec:equality}. The special case where the inverse category is $\deltop$ (the category of nonempty finite sets and strictly increasing functions) gives rise to the \emph{equality semi-simplicial type}, which is discussed in \cref{sec:equality-sst}. We show that the projection of a full n-dimensional tetrahedron to any of its horns is a homotopy equivalence.
Then, in \cref{sec:nat-trans}, we construct a fibrant diagram that represents the exponential of a fibrant and a non-fibrant diagram, with the limit taken at each level.
We extend the category $\deltop$ in \cref{sec:extending-sst}, which allows us to make precise how contractible $\Sigma$-components can be ``added and removed'' in general. Our main result, namely that the types $A \toomega B$ and $\proptrunc A \to B$ are homotopy equivalent, is shown in \cref{sec:maintheorem}.
The finite special cases which can be done without the assumption of Reedy $\oppo\omega$-limits are proved in \cref{sec:infconstancy-finite}, while \cref{sec:conclusions}
is reserved for concluding remarks.

\vspace*{0.2cm}

\subparagraph*{\textbf{Notation.}}
We use type-theoretic notation and we assume familiarity with HoTT, in particular with the book~\cite{HoTTbook} and its terminology.
If $A$ is a type and $B$ depends on $A$, it is standard to write $\prd{a:A}B(a)$ or $\prd A B$ for the type of dependent functions.
For the dependent pair type, we write $\sm{a:A}B(a)$ or $\sm{A}{B}$.
The reason for this apparent mismatch is that we sometimes have to consider nested $\Sigma$-types, and it would seem unreasonable to write all $\Sigma$-components apart from the very last one as subscripts. 
It is sometimes useful to give the last component of a (nested) $\Sigma$-type a name, in which case we allow ourselves to write expressions like $\sm{a:A}\sm{b:B(a)}(c : C(a,b))$.

Regarding notation, one potentially dangerous issue is that there are many different notions of equality-like concepts, such as the identity type of type theory, internal equivalence of types, judgmental equality of type-theoretic expressions, isomorphism of objects in a category, isomorphism or equivalence of categories, and strict equality of morphisms.
For this article, we use the convention that \emph{internal} concepts are written using ``two-line'' symbols, coinciding with the notation of~\cite{HoTTbook}: we write $\id a b$ for the identity type $\idtype a b$, and $A \eqvsym B$ for the type of equivalences between $A$ and $B$.
Other concepts are denoted (if at all) using ``three-line'' symbols: we write $a \jdeq b$ if $a$ and $b$ denote two judgmentally equal expressions, and we use $\jdeq$ for other cases of \emph{strict} equality in the meta-theory. 
By writing $x \cong y$, we express that $x$ and $y$ are isomorphic objects of a category. 
Equality of morphisms (of a category) is sometimes expressed with $\jdeq$, but usually by saying that some diagram commutes, and if we say that some diagram commutes, we always mean that it commutes \emph{strictly}, not only up to homotopy. Other notions of equality are written out.

If $C$ is some category and $x \in C$ an object, we write (as it is standard) $x \slash C$ for the co-slice category of arrows $x \to y$. We do many constructions involving subcategories, but we want to stress that we always and exclusively work with \emph{full} subcategories (apart from the subcategory of fibrations in \cref{def:ttfc}).
Thus, we write $C - x$ for the full subcategory of $C$ that we get by removing the object $x$. Further, if $D$ is a full subcategory of $C$ (we write $D \subset C$) which does not contain $x$, we write $D + x$ for the full subcategory of $C$ that has all the objects of $D$ and the object $x$.

Not exactly notation, but in a similar direction, are the following two remarks: 
First, when we refer to the \emph{distributivity law} of $\Pi$ and $\Sigma$, 
we mean the equivalence
\begin{equation} \label{eq:distributivity}
\prd{a:A}\sm{b:B(a)}C(a,b) \eqvsymspace \sm{f:\prd{a:A}B(a)}\prd{a:A}C(a,f(a))
\end{equation}
which is sometimes called the \emph{type-theoretic axiom of choice} or \emph{$\infchoice$} (see~\cite{HoTTbook}).
Second, if we talk about a \emph{singleton}, we mean a type expression of the form $\sm{a:A}\id a x$ or $\sm{a:A} \id x a$ for a fixed $x$.
The term \emph{singleton} therefore refers to a syntactical shape in which some types can be represented, and it is well-known that those types are contractible.

\section{A First Few Special Cases} \label{sec:motivation}

In this section, we want to discuss some simple examples and aim to build up intuition for the general case.
For now, we work entirely in standard (syntactical) homotopy type theory as specified in~\cite[Appendix A.2]{HoTTbook}, together with function extensionality (see \cite[Appendix A.3.1]{HoTTbook}) and propositional truncation. 
To clarify the latter, we assume that, for any type $A$, there is a propositional type $\proptrunc  A$ with a function ${\bproj -}_A : A \to \proptrunc A$.
Composition with ${\bproj -}_A$ is moreover assumed to induce an equivalence $(\proptrunc A \to B) \eqvsym (A \to B)$.
Due to the ``equivalence reasoning style'' nature of our proofs, we can avoid the necessity of any ``unpleasant manual computation''. 
Thus, we would not benefit from the judgmental computation rule that is usually imposed on the propositional truncation 
(other than not having to assume function extensionality explicitly~\cite{krausEscardoEtAll_existence}).
We think it is worth mentioning that we actually do not require much of the power of homotopy type theory: we only use $1$, $\Sigma$, $\Pi$, identity types, propositional truncations, and assume function extensionality. This will in later sections turn out to be a key feature which enables us to perform the construction in the infinite case (assuming the existence of certain Reedy limits).

Assume we want to construct an inhabitant of $\proptrunc  A \to B$ and $B$ is an $n$-type, for a fixed given $n$. 
The case $n \jdeq -2$ is trivial. For $n \jdeq -1$, the universal property (or the elimination principle) can be applied directly.
In this section, we explain the cases $n \jdeq 0$ and $n \jdeq 1$. 
These results have been formalised in Agda by Danielsson \cite[file \href{http://www.cse.chalmers.se/~nad/listings/equality/H-level.Truncation.html}{\nolinkurl{H-level.Truncation}}]{nisse_library}. 

To begin, we formulate and prove the following auxiliary statement.
Note that the variable $m$ is fixed externally, i.e.\ we do not quantify internally over $m$, as doing so would make it hard to quantify over the type families $C_i$:
\begin{lemma} \label{lem:ignore-trunc-A}
Let $m$ be a fixed natural number. Assume that $A$ is a type and $C_1, C_2, \ldots, C_m$ are type families, where 
$C_j$ may (only) depend on $A$ and on $\prd A C_i$ for all $i < j$, making the expression 
\begin{equation} \label{eq:valid-type}
 \sm{\prd A C_1} \sm{\prd A C_2} \ldots \sm{\prd A C_{m-1}} \prd A C_m
\end{equation}
a well-formed type. 
If we write $C$ for the type \eqref{eq:valid-type}, then the types 
$C$ and $\proptrunc A \to C$ are equivalent.
\end{lemma}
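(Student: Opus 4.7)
The plan is to prove the equivalence $C \eqvsym (\proptrunc A \to C)$ by induction on $m$, so that the equivalence is (homotopic to) the canonical ``constant function'' map $c \mapsto \lambda\_.\,c$.

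For the base case $m = 1$, where $C$ is simply $\prd A C_1$, I first swap the two $\Pi$-quantifiers to obtain
\[
  (\proptrunc A \to \prd{a:A} C_1(a)) \eqvsymspace \prd{a:A} (\proptrunc A \to C_1(a)).
\]
For each fixed $a : A$, the element $\bproj{a}_A$ inhabits the proposition $\proptrunc A$ and thereby witnesses that $\proptrunc A$ is contractible, so $(\proptrunc A \to C_1(a)) \eqvsym C_1(a)$ by removing a contractible domain (here function extensionality, assumed throughout this section, is used). Composing these two equivalences settles the base case.

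For the inductive step, write $C \jdeq \sm{f_1 : \prd A C_1} C'(f_1)$, where $C'(f_1)$ is the nested $\Sigma$-type of depth $m-1$ formed by $C_2, \ldots, C_m$ after substituting $f_1$. The distributivity law of $\Pi$ over $\Sigma$ (cf.\ equation~\eqref{eq:distributivity}) yields
\[
  (\proptrunc A \to C) \eqvsymspace \sm{F : \proptrunc A \to \prd A C_1} \prd{p : \proptrunc A} C'(F(p)).
\]
The already established base case (with $m = 1$) provides an equivalence $\prd A C_1 \eqvsym (\proptrunc A \to \prd A C_1)$ sending $f_1$ to $\lambda p.\,f_1$. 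Transporting the first $\Sigma$-component across it turns the right-hand side into $\sm{f_1 : \prd A C_1} \prd{p : \proptrunc A} C'(f_1)$, where the second factor no longer depends on $p$ and is simply $\proptrunc A \to C'(f_1)$. Since $C'(f_1)$ is itself a nested $\Sigma$-type of the form covered by the lemma (with $m-1$ many $\Sigma$-components), the induction hypothesis gives $\proptrunc A \to C'(f_1) \eqvsym C'(f_1)$, and recombining yields $\sm{f_1 : \prd A C_1} C'(f_1) \jdeq C$.

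The only slightly delicate point is that rewriting the first $\Sigma$-component via the base-case equivalence implicitly rewrites the family in the second component as well; the essential observation is that the transported inhabitant $\lambda p.\,f_1$ is literally constant in $p$, so that $C'(F(p))$ becomes $p$-independent and the induction hypothesis is applicable. All remaining steps are routine applications of type-theoretic equivalences (swapping two $\Pi$'s, removing a contractible domain, and the axiom of choice), which are available under the hypotheses of the lemma.
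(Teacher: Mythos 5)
Your proof is correct and takes essentially the same route as the paper's: distributivity of $\Pi$ over $\Sigma$, the equivalence $\prd A C_1 \eqvsym (\proptrunc A \to \prd A C_1)$ given by the constant-function map (so that the remaining component becomes independent of the argument in $\proptrunc A$), and induction on $m$. The only difference is presentational — the paper isolates the one-step equivalence $\proptrunc A \to \sm{g:\prd A B}D(g) \eqvsym \sm{g:\prd A B}(\proptrunc A \to D(g))$ before invoking the induction, while you interleave it with the inductive step.
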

\begin{proof}[Proof of \cref{lem:ignore-trunc-A}]
 This holds essentially by the usual distributivity law~\eqref{eq:distributivity} of $\Pi$ (or $\to$) and $\Sigma$, together with the equivalence $\proptrunc  A \times A \eqvsym A$.
 In detail, for a type $A$, a type family $B$ indexed over $A$, and a second family $D$ indexed over $\prd A B$, we have the following chain of equivalences:
\begin{alignat*}{3}
  &&& \proptrunc A \to \left(\sm{g : \prd{A} B} D(g)\right) \\
  \intertext{\hspace{1.5cm}(by distributivity)} 
   \quad & \eqvsym & \quad & \sm{g : \proptrunc A \to \prd{A} B} \prd{x : \proptrunc A} D(g(x))     \\
  \intertext{\hspace{1.5cm}(as $\prd{A}B$ and $\proptrunc A \to \prd{A} B$ are equivalent via $g \mapsto \lam y g$)}
  & \eqvsym && \sm{g : \prd{A}B} \prd{x:\proptrunc{A}}D(\lam y g(x))  \\
  \intertext{\hspace{1.5cm}(as $x = y$ for $x,y : \proptrunc A$, thus $g(x) = g(y)$)}
  & \eqvsym && \sm{g : \prd{A}B} \prd{x:\proptrunc{A}}D(\lam y g(y))  \\
  & \eqvsym && \sm{g : \prd{A}B} \left(\proptrunc{A} \to D(g)\right)  
\end{alignat*}
We do induction on $m$. The case $m \jdeq 1$ (as well as the degenerated case $m \jdeq 0$) is immediate.
For $m \geq 2$, the type $C$ as considered in the lemma can be written as $\sm{\prd A B}D$, and by the above construction, $\proptrunc A \to C$ it is thus equivalent to 
$\sm{g : \prd A B} \proptrunc A \to D(g)$.  
\end{proof}

\subsection{Constant Functions into Sets}

We consider the case $n \jdeq 0$ first; that is, we assume that $B$ is a set.
Recall the definition of $\const$ given in \eqref{eq:const-def}.

\begin{satz}[case $n \jdeq 0$] \label{satz:special-case-0}
 Let $B$ be a set and $A$ be any type. Then, we have the equivalence
 \begin{equation} \label{eq:special-case-0}
 \eqvspace{(\proptrunc  A \to B)}{\sm{f : A \to B} \const_f}.
 \end{equation}
\end{satz}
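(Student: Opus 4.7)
My plan is to use \cref{lem:ignore-trunc-A} to chain equivalences. The right-hand side of \eqref{eq:special-case-0} is already in the form required by that lemma with $m \jdeq 2$: taking $C_1(a) \defeq B$ (constant in $a$) and $C_2(a, f) \defeq \prd{a' : A}\id{f(a)}{f(a')}$, we have $\prd{A}C_1 \jdeq A \to B$ and $\prd{A}C_2 \jdeq \const_f$. The lemma then yields
\[
 \sm{f : A \to B}\const_f \eqvsymspace \proptrunc A \to \sm{f : A \to B}\const_f,
\]
so the task reduces to constructing an equivalence $\left(\proptrunc A \to \sm{f : A \to B}\const_f\right) \eqvsym \left(\proptrunc A \to B\right)$, which by $\Pi$-congruence will follow from a pointwise family $\prd{x : \proptrunc A}\left(\sm{f : A \to B}\const_f \eqvsym B\right)$.

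To exhibit this pointwise equivalence, I would fix $x_0 : \proptrunc A$ and write $F \defeq \sm{f : A \to B}\const_f$. The backward direction $B \to F$ is $b \mapsto \left(\lam a b,\; \lam{a^1 a^2}\refl{b}\right)$. For the forward direction $F \to B$, I would observe that the type $\sm{b:B}\prd{a:A}\id{f(a)}{b}$ is a proposition whenever $B$ is a set and $\proptrunc A$ is inhabited: two witnesses $(b_1,e_1)$ and $(b_2,e_2)$ have equal first components since $b_1 = f(a) = b_2$ for any $a : A$ obtained by eliminating $x_0$ into the propositional identity type $\id{b_1}{b_2}$ of the set $B$, and equal second components since $B$ is a set. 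The map $a_0 \mapsto \left(f(a_0),\; \lam a c(a, a_0)\right)$ from $A$ into this proposition therefore extends to $\proptrunc A$; evaluation at $x_0$ followed by the first projection yields the desired $b : B$. This is the standard image construction of a weakly constant function into a set, as discussed in~\cite{krausEscardoEtAll_existence}.

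Checking that the two composites are identities uses that $F$ is itself a set (since $\const_f$ is a proposition over the set $A \to B$), so it suffices to verify agreement on first $\Sigma$-components. The round trip starting at $b : B$ recovers $b$ by inspection of the image construction. Starting at $(f, c) : F$, the extracted $b$ satisfies $\id{f(a)}{b}$ for every $a$, hence $\lam a b = f$ by function extensionality. The main, albeit minor, obstacle is the careful handling of the factorization through $\proptrunc A$; the image-construction viewpoint makes this step essentially routine, and it foreshadows the general ``expand and contract'' strategy that the paper develops in later sections.
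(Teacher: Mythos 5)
Your proposal is correct, but the heart of it---the equivalence $B \eqvsym \sm{f:A\to B}\const_f$ in the presence of an inhabitant of $\proptrunc A$---is established by a genuinely different route from the paper's. The paper assumes a point $\apoint : A$ and proceeds by ``expanding and contracting'': it inflates $B$ to a nested $\Sigma$-type by adjoining a family of singletons and (using that $B$ is a set) further contractible components, reorders them, and contracts again, reading off that the underlying map $b \mapsto \left(\lam a b ,\, \lam {a^1a^2} \refl b\right)$ is an equivalence; since this map does not mention $\apoint$ and $\isequiv$ is propositional, the hypothesis $A$ is then weakened to $\proptrunc A$. You instead fix $x_0 : \proptrunc A$ and build an explicit quasi-inverse $F \to B$ via the image construction for weakly constant maps into sets (factoring through the proposition $\sm{b:B}\prd{a:A}\id{f(a)}{b}$), verifying the round trips by hand; both arguments then invoke \cref{lem:ignore-trunc-A} in exactly the same way to strip the outer $\proptrunc A \to {}$. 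Your route is shorter and more self-contained for this special case, but---contrary to your closing remark---it is the paper's singleton-juggling argument, not the image construction, that foreshadows and scales to \cref{satz:special-case-1} and ultimately \cref{thm:result}: your method hinges on $\sm{b:B}\prd{a:A}\id{f(a)}{b}$ being a proposition, which already fails when $B$ is merely a $1$-type. One small imprecision: for the round-trip checks the relevant fact is that $\const_f$ is propositional (so equality in $F$ reduces to equality of first components), rather than the stronger (true but unneeded) observation that $F$ is a set.
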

Note that, if $B$ is not only a set but even a propositional type, the condition $\const_f$ is not only automatically satisfied, but it is actually contractible as a type. By the usual equivalence lemmata, the type on the right-hand side of \eqref{eq:special-case-0} then simplifies to $(A \to B)$, which exactly is the universal property.
Thus, we view \eqref{eq:special-case-0} as a first generalisation.

\begin{proof}[Proof of \cref{satz:special-case-0}]
 Assume $\apoint : A$ is some point in $A$.
 In the following, we construct a chain of equivalences. The variable names for certain $\Sigma$-components might seem somewhat odd: for example, we introduce a point $f_1 : B$. 
 The reason for this choice will become clear later. 
 For now, we simply emphasise that $f_1$ is ``on the same level'' as $f : A \to B$ in the sense that they both give points, rather than for example paths (like, for example, an inhabitant of $\const_f$).
\begin{equation}
 \begin{alignedat}{4} 
 && &&& B \\
  \text{(S1)} && \quad & \eqvsym & \quad & \sm{f_1 : B} \big(A \to \sm{b : B} \id b {f_1} \big)  \\
  \text{(S2)} &&& \eqvsym && \sm{f_1 : B} \sm{f : A \to B} \prd {a:A} \id{f(a)}{f_1} \\
  \text{(S3)} &&& \eqvsym && \sm{f_1 : B} \sm{f : A \to B} (\prd {a:A} \id{f(a)}{f_1}) \times (\const_f) \times (\id{f(\apoint)}{f_1}) \\
  \text{(S4)} &&& \eqvsym && \sm{f : A \to B} (\const_f) \times \sm{f_1 : B} (\id{f(\apoint)}{f_1}) \times (\prd {a:A} \id{f(a)}{f_1}) \\
  \text{(S5)} &&& \eqvsym && \sm{f : A \to B} (\const_f) \times \left(\sm{f_1 : B} \id{f(\apoint)}{f_1}\right) \\
  \text{(S6)} &&& \eqvsym && \sm{f : A \to B} \const_f 
 \end{alignedat}
\end{equation}
 Let us explain the validity of the single steps. In the first step, we add a family of singletons. In the second step, we apply the distributivity law \eqref{eq:distributivity}. In the third step, we add two $\Sigma$-components, and $B$ being a set ensures that both of them are propositional. 
 But it is very easy to derive both of them from $\prd{a:A}\id{f(a)}{f_1}$, showing that both of them are contractible.
 In the fourth step, we simply reorder some $\Sigma$-components, and in the fifth step, we use that $\prd{a:A}\id{f(a)}{f_1}$ is contractible by an argument analogous to that of the third step. Finally, we can remove two $\Sigma$-components which form a contractible singleton.
 
 If we carefully trace the equivalences, we see that the function part 
 \begin{equation}
  e : B \; \to \; \sm{f : A \to B} \const_f
 \end{equation}
 is given by
 \begin{equation}
  e(b) \jdeq \left(\lam a b \, , \, \lam {a^1a^2} \refl b \right),
 \end{equation}
 not depending on the assumed $\apoint:A$. But as $e$ is an equivalence assuming $A$, it is also an equivalence assuming $\proptrunc  A$. 
 
 As $\proptrunc  A \to \big( B \eqvsym \left(\sm{f : A \to B} \const_f \right)\big)$ implies that the two types $(\proptrunc  A \to B)$ and $\big(\proptrunc  A \to \left(\sm{f : A \to B} \const_f \right)\big)$ are equivalent, the statement follows from \cref{lem:ignore-trunc-A}.
\end{proof}

The core strategy of the steps (S1) to (S6) is to add and remove contractible $\Sigma$-components, and to reorder and regroup them. This principle of expanding and contracting a type expression can be generalised and, as we will see, even works for the infinite case when $B$ is not known to be of any finite truncation level. 
Generally speaking, we use two ways of showing that components of $\Sigma$-types are contractible. The first is to group two of them together such that they form a singleton, as we did in (S1) and (S6). 
The second is to use the fact that $B$ is truncated, as we did in (S3). 
We consider the first to be the key technique, and in the general (infinite) case of an untruncated $B$, the second can not be applied at all.
We thus view the second method as a tool to deal with single $\Sigma$-components that lack a ``partner'' only because the case that we consider is finite, and which is unneeded in the infinite case.

\subsection{Constant Functions into Groupoids}

The next special case is $n \jdeq 1$. Assume that $B$ is a $1$-type (sometimes called a \emph{groupoid}).
Let us first clarify which kind of constancy we expect for a map $f : A \to B$ to be necessary.
Not only do we require $c : \const_f$, we also want this constancy proof (which is in general not propositional any more) to be \emph{coherent}:
given $a^1$ and $a^2 : A$, we expect that $c$ only allows us to construct essentially \emph{one} proof of $\id{f(a^1)}{f(a^2)}$.
The reason is that we want the data (which includes $f$ and $c$) together to be just as powerful as a map $\proptrunc  A \to B$, and from such a map, we only get trivial loops in $B$.

We claim that the required coherence condition is
\begin{equation} \label{eq:coh-cond}
 \cohcond_{f,c} \defeq \prd{a^1 a^2 a^3 : A} c(a^1,a^2) \cdot c(a^2,a^3) = c(a^1,a^3).
\end{equation}
A first sanity check is to see whether from $d: \cohcond_{f,c}$ we can now prove that $c(a,a)$ is equal to $\refl{a}$, something that should definitely be the case if we do not want to be able to construct possibly different parallel paths in $B$.
To give a positive answer, we only need to see what $d(a,a,a)$ tells us.

\begin{satz}[case $n \jdeq 1$] \label{satz:special-case-1}
 Let $B$ be a groupoid ($1$-type) and $A$ be any type. Then, we have 
 \begin{equation}
  \eqvspace {(\proptrunc  A \to B)} {\big( \sm{f:A \to B} \sm{c : \const_f} \cohcond_{f,c} \big)}.
 \end{equation}
\end{satz}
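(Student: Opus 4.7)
The plan is to follow the template of the proof of \cref{satz:special-case-0} closely: under the assumption of an auxiliary point $\apoint : A$, I would construct an explicit chain of equivalences $B \eqvsym \sm{f:A \to B}\sm{c:\const_f}\cohcond_{f,c}$ by expanding with contractible singletons, applying the distributivity law~\eqref{eq:distributivity}, and then stripping out the extra components. Once such a chain is in place, the same ``the forward map does not mention $\apoint$'' argument as in \cref{satz:special-case-0} will promote the equivalence from $A$ to $\proptrunc A$, and \cref{lem:ignore-trunc-A} will finish the job.

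Concretely, starting from $B$ I would first introduce a basepoint $f_1 : B$ and then a function from $A$ into an iterated singleton of the form $\sm{b:B}\sm{p:\id b {f_1}}\sm{q:\id b {f_1}}\id p q$, whose contractibility is witnessed layer by layer. Applying distributivity turns the $A \to$-layer into honest $\Sigma$-components, so that after the rounds of distribution I obtain a nested $\Sigma$-type in which $f : A \to B$ and a ``pointed constancy'' $p : \prd{a:A} \id{f(a)}{f_1}$ appear explicitly, together with some 2-path bookkeeping. The crucial rearrangement then passes from the pointed presentation $(f_1, p)$ to the binary presentation $(c, d)$ via $c(a,a') \defeq p(a) \ct p(a')^{-1}$ (so that $d$ is automatic by simple path algebra) and, conversely, $p(a) \defeq c(a, \apoint)$; the discrepancy between $p$ and $c(-,\apoint)$, and similarly between the induced and the postulated coherence, lives in 2-path types over $B$, which are propositional precisely because $B$ is a $1$-type, so that the partner components can be shown to be contractible and removed. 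After these steps one arrives at $\sm{f:A\to B}\sm{c:\const_f}\cohcond_{f,c}$.

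I expect the main obstacle to be the careful bookkeeping in this rearrangement step: one has to choose the added singletons so that every ``partner'' that gets stripped at the end is genuinely contractible and not merely propositional, so that the chain goes through without ad-hoc 2-dimensional computations. Once the chain is in place, inspection shows that the resulting map $e : B \to \sm{f:A\to B}\sm{c:\const_f}\cohcond_{f,c}$ is the obvious constant one, sending $b$ to the triple $(\lam a b, \lam{a^1 a^2} \refl b, \ldots)$ whose coherence component is a constant reflexivity; in particular it does not depend on $\apoint$. Hence, as in \cref{satz:special-case-0}, $e$ is an equivalence already under the assumption $\proptrunc A$, and \cref{lem:ignore-trunc-A} applied with $m \jdeq 3$ and $C_1(a) \defeq B$, $C_2(a^1, f) \defeq \prd{a^2:A}\id{f(a^1)}{f(a^2)}$, $C_3(a^1, f, c) \defeq \prd{a^2, a^3 : A} c(a^1,a^2) \ct c(a^2,a^3) = c(a^1,a^3)$ yields the desired equivalence $(\proptrunc A \to B) \eqvsym \sm{f:A\to B}\sm{c:\const_f}\cohcond_{f,c}$.
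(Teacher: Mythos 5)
Your proposal is correct and follows essentially the same route as the paper's proof: expand $B$ (given $\apoint : A$) by contractible singletons, apply distributivity, regroup so that the leftover components are either singletons or propositional-and-inhabited thanks to $B$ being a $1$-type, observe that the resulting map $e : B \to \sm{f:A\to B}\sm{c:\const_f}\cohcond_{f,c}$ is the constant one and hence independent of $\apoint$, and finish with \cref{lem:ignore-trunc-A}. The paper merely makes a slightly different (but equivalent) choice of intermediate $\Sigma$-components when executing the bookkeeping you flag as the main obstacle; in particular your identification $c(a,a') \defeq p(a)\ct p(a')^{-1}$ appears there as the singleton pairing of $c$ with a component $d_1 : \prd{a^1 a^2:A}\id{c(a^1,a^2)\ct c_1(a^2)}{c_1(a^1)}$.
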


Note that \cref{satz:special-case-1} generalises \cref{satz:special-case-0}: if $B$ is a set (as in \cref{satz:special-case-0}), it is also a groupoid and the type $\cohcond_{f,c}$ becomes contractible, as it talks about equality of equalities.

\begin{proof}
 Although not conceptually harder, it is already significantly more tedious to write down the chain of equivalences.
 We therefore choose a slightly different representation. Assume $\apoint : A$ as before. We then have:
\begin{equation}
 \begin{alignedat}{4} 
 && &&& \phantom{\Sigma (}  B \\
  \text{(S1)} && \quad & \eqvsym & \quad & \\
  \text{} && \quad &  & \quad & \sm{f_1 : B}   \\ 
  \text{} && \quad &  & \quad & \sm{f : A \to B} \sm{c_1 : \prd{a:A} \id{f(a)}{f_1}} \\
  \text{} && \quad &  & \quad & \sm{c : \const_f} \sm{d_1 : \prd{a^1 a^2:A} \id{c(a^1,a^2) \cdot c_1(a^2)}{c_1(a^1)}} \\
  \text{} && \quad &  & \quad & \sm{c_2 : \id{f(\apoint)}{f_1}} \sm{d_3 : \id{c(\apoint,\apoint) \cdot c_1(\apoint)}{c_2}} \\
  \text{} && \quad &  & \quad & \sm{d : \cohcond_{f,c}} \\
  \text{} && \quad &  & \quad & \phantom{\Sigma }  \left(d_2 : \prd{a:A} c(\apoint,a) \cdot c_1(a) = c_2\right) \\
  \text{(S2)} && \quad & \eqvsym & \quad & \\
  \text{} && \quad &  & \quad & \sm{f : A \to B} \sm{c : \const_f} \sm{d : \cohcond_{f,c}} \\
  \text{} && \quad &  & \quad & \sm{f_1 : B} \sm{c_2 : \id{f(\apoint)}{f_1}} \\
  \text{} && \quad &  & \quad & \sm{c_1 : \prd{a:A} \id{f(a)}{f_1}} \sm{d_2 : \prd{a:A} c(\apoint,a) \cdot c_1(a) = c_2} \\
  \text{} && \quad &  & \quad & \sm{d_1 : \prd{a^1 a^2:A} \id{c(a^1,a^2) \cdot c_1(a^2)}{c_1(a^1)}} \\
  \text{} && \quad &  & \quad & \phantom{\Sigma }  \left(d_3 : \id{c(\apoint,\apoint) \cdot c_1(\apoint)}{c_2}\right) \\
  \text{(S3)} && \quad & \eqvsym & \quad & \\
  \text{} && \quad &  & \quad & \sm{f : A \to B} \sm{c : \const_f} (d : \cohcond_{f,c}) 
 \end{alignedat}
\end{equation}
 In the first step (S1), we 
 expand the single type $B$ to a nested $\Sigma$-type with in total nine $\Sigma$-components.
 We write them in six lines, and each line apart from the first is a contractible part of this nested $\Sigma$-type, implying that the whole type is equivalent to $B$.
 In the lines two and three, we can apply the distributivity law, i.e.\ the equivalence~\eqref{eq:distributivity}, to give them the shape of singletons, while the fourth line is already a singleton.
 As $B$ is $1$-truncated, the lines five and six represent propositional types, but those types are easily seen to be inhabited using the other $\Sigma$-components. 

 In the second step, we simply re-order some $\Sigma$-components. Then, in step (S3), we remove the $\Sigma$-components in the lines two to five which is justified as, again, each line represents a contractible part of the nested $\Sigma$-type.
 
 We trace the canonical equivalences to see that the function-part of the constructed equivalence is
 \begin{align}
  &e : B \to \sm{f : A \to B} \sm{c : \const_f} (d : \cohcond_{f,c}) \\
  &e(b) \jdeq (\lam a b \, , \, \lam {a^1 a^2} \refl{b} \, , \, \lam {a^1 a^2 a^3} \refl{\refl b}).
 \end{align}
 In particular, $e$ is independent from the assumed $\apoint : A$. 
 As before, this means that $e$ is an equivalence assuming $\proptrunc  A$, and, with the help of \cref{lem:ignore-trunc-A}, we derive the claimed equivalence.
\end{proof}

\subsection{Outline of the General Idea}  \label{subsec:outline-of-inf}

At this point, it seems plausible that what we have done for the special cases of $n \jdeq 0$ and $n \jdeq 1$ can be done for any (fixed) $n < \infty$. 
Nevertheless, we have seen that the case of groupoids is already significantly more involved than the case of sets.
To prove a generalisation, we have to be able to state what it means for a function to be ``coherently constant'' on $n$ levels, rather than just the first one or two.

Let us try to specify what ``coherently constant'' should mean in general.
If we have a function $f : A \to B$, we get a point in $B$ for any $a:A$.
A constancy proof $c : \const_f$ gives us, for any pair of points in $A$, a path between the corresponding points in $B$.
Given three points, $c$ gives us three paths which form a ``triangle'', and an inhabitant of $\cohcond_{f,c}$ does nothing else than providing a filler for such a triangle.
It does not take much imagination to assume that, on the next level, the appropriate coherence condition should state that the ``boundary'' of a tetrahedron, consisting of four filled triangles, can be filled.

To gain some intuition, let us look at the following diagram:
\begin{figure}[H]
\begin{center}
\begin{tikzpicture}[align=left, node distance=2.5cm]

  \node [color=black](A1) {$A$}; 
  \node [above of=A1](A2) {$A \times A$}; 
  \node [above of=A2](A3) {$A \times A \times A$}; 
  
  \node [right of=A1, node distance=\textwidth/1.6] (B1) {$B$}; 
  \node [above of=B1] (B2) {$\sm{b_1,b_2:B} \id{b_1}{b_2}$}; 
  \node [above of=B2] (B3) {$\sm{b_1,b_2,b_3 : B}$ \\ $\sm{p_{12}: \id{b_1}{b_2}}$ \\ $\sm{p_{23}: \id{b_2}{b_3}}$ \\ $\sm{p_{13}: \id{b_1}{b_3}}$ \\ \phantom{$\Sigma$} $p_{12}\cdot p_{23} = p_{13}$}; 
  
  \draw[->, transform canvas={xshift=-0.25ex}] (A2) to node {} (A1);
  \draw[->, transform canvas={xshift=0.25ex}] (A2) to node {} (A1);

  \draw[->, transform canvas={xshift=-0.5ex}] (A3) to node {} (A2);
  \draw[->, transform canvas={xshift=0ex}] (A3) to node {} (A2);
  \draw[->, transform canvas={xshift=0.5ex}] (A3) to node {} (A2);
  
  \draw[->, transform canvas={xshift=-0.25ex}] (B2) to node {} (B1);
  \draw[->, transform canvas={xshift=0.25ex}] (B2) to node {} (B1);

  \draw[->, transform canvas={xshift=-0.5ex}] (B3) to node {} (B2);
  \draw[->, transform canvas={xshift=0ex}] (B3) to node {} (B2);
  \draw[->, transform canvas={xshift=0.5ex}] (B3) to node {} (B2);
  
  \node [right of=A1, node distance=2.5cm] (A1a) {}; 
  \node [right of=A2, node distance=2.5cm] (A2a) {}; 
  \node [right of=A3, node distance=2.5cm] (A3a) {}; 
  \node [left of=B1, node distance=2.5cm] (B1b) {}; 
  \node [left of=B2, node distance=2.5cm] (B2b) {}; 
  \node [left of=B3, node distance=2.5cm] (B3b) {};

  \draw[->, dashed] (A1a) to node [above] {$t_{\ordinal{0}}$} (B1b);
  \draw[->, dashed] (A2a) to node [above] {$t_{\ordinal{1}} : \const_{t_{\ordinal{0}}}$} (B2b);
  \draw[->, dashed] (A3a) to node [above] {$\cohcond_{t_{\ordinal{0}}, t_{\ordinal{1}}}$} (B3b);
\end{tikzpicture}
\caption{Constancy as a natural transformation} \label{fig:const-as-nat-trans}
\end{center}
\end{figure}
All vertical arrows are given by projections. 
Consider the category $D$ with objects the finite ordinals $\ordinal 0$, $\ordinal 1$ and $\ordinal 2$ (with $1$, $2$, and $3$ objects, respectively), and arrows the strictly monotonous maps. Then, the left-hand side and the right-hand side can both be seen as a diagram over $\oppo D$. 
The data that we need for a ``coherently constant function'' from $A$ into $B$, if $B$ is a groupoid, can now be viewed as a natural transformation $t$ from the left to the right diagram (the dashed horizontal arrows). On the lowest level, such a natural transformation consists of a function $t_{\ordinal{0}} : A \to B$, which we called $f$ before. On the next level, 
we have $t_{\ordinal{1}} : A^2 \to \sm{b_1,b_2:B}b_1=b_2$, but in such a way that the diagram commutes (strictly, not up to homotopy), enforcing
\begin{equation}
 \fst(t_{\ordinal{1}}(a^1,a^2)) \jdeq (t_{\ordinal{0}}(a^1), t_{\ordinal{0}}(a^2))
\end{equation}
and thereby making $t_{\ordinal{1}}$ the condition that $t_{\ordinal{0}}$ is weakly constant. Finally, $t_{\ordinal{2}}$ yields the coherence condition $\cohcond$.

In the most general case, where we do not put any restriction on $B$, we certainly cannot expect that a finite number of coherence conditions can suffice.
Instead of the diagram over $\oppo D$, as pictured on the right-hand side of \cref{fig:const-as-nat-trans}, 
we will need a diagram over the the category of all non-zero finite ordinals.
This is what we call the \emph{equality semi-simplicial type} over $B$, written $\Ee{B}$. 
In the language of model categories, this is a fibrant replacement of the constant diagram.
It would be reasonable to expect that our $\Ee{B}$ extends the diagram shown in \cref{fig:const-as-nat-trans}, but this will only be true up to (levelwise) equivalence of types.
Defining $\Ee{B}$ as a strict extension of that diagram is tempting, but it seems to be combinatorically nontrivial to continue in the same style, as it would basically need Street's orientals~\cite{Street1987283}.
Our construction will be much simpler to write down and easier to work with, with the only potential disadvantage being that, compared to the diagram \cref{fig:const-as-nat-trans}, the lower levels will look rather bloated.
The other diagram in \cref{fig:const-as-nat-trans}, i.e.\ the left-hand side, is easy to extend, and we call it the \emph{trivial} diagram over $A$.
In the terminology of simplicial sets, it is the $\ordinal 0$-coskeleton of the constant diagram. 
Our main result is essentially an internalised version, stated as an equivalence of types, of the following slogan:
\begin{center}
\begin{minipage}[]{\textwidth *4/5} 
\emph{Functions $\proptrunc A \to B$ correspond to natural transformations from the trivial diagram over $A$ to the semi-simplicial equality type over $B$.}
\end{minipage} 
\end{center}
Our type of natural transformations is basically a Reedy limit of an exponential of diagrams. 
We will perform the \emph{expanding and contracting} principle that we have exemplified in the proofs of \cref{satz:special-case-0,satz:special-case-1} by modifying the index category of the diagram of which we take the limit, step by step, taking care that every single step preservers the Reedy limit in question up to homotopy equivalence.
As we will see, these steps correspond indeed to the steps that we took in the proofs of \cref{satz:special-case-0,satz:special-case-1}.

\section{Fibration Categories, Inverse Diagrams, and Reedy Limits} \label{sec:ttfcs}

In his work on \emph{Univalence for Inverse Diagrams and Homotopy Canonicity}, Shulman has proved several deep results~\cite{shulman_inversediagrams}.
Among other things, he shows that diagrams over inverse categories can be used to build new models of univalent type theory, and uses this to prove a partial solution to Voevodsky's homotopy-canonicity conjecture. 
We do not require those main results; in fact, we do not even assume that there is a universe, and consequently we also do not use univalence! 
At the same time, what we want to do can be explained nicely in terms of diagrams over inverse diagrams, and we therefore choose to work in the same setting. 
Luckily, it is possible to do this with only a very short introduction to type-theoretic fibration categories, inverse diagrams and Reedy limits,
and this is what the current section servers for.

\vspace*{0.2cm}

\subparagraph{\textbf{Type-theoretic fibration categories.}}
A \emph{type-theoretic fibration category} (as defined in~\cite[Definition~2.1]{shulman_inversediagrams} is a category with some structure that allows to model dependent type theory with identity types.
Let us recall the definition, where we use a lemma by Shulman to give an equivalent (more ``type-theoretic'') formulation:

\begin{definition}[{Type-theoretic fibration category,~\cite[Definition 2.1 combined with Lemma 2.4]{shulman_inversediagrams}}] \label{def:ttfc}
 A type-theoretic fibration category is a category $\ttfc$ which has the following structure.
 \begin{enumerate}[label={(\roman*)},ref={\roman*}]
  \item A terminal object $\unit$.
  \item A (not necessarily full) subcategory $\mathfrak{F} \subset \ttfc$ containing all the objects, all the isomorphisms, and all the morphisms with codomain $\unit$. A morphism in $\mathfrak F$ is called a \emph{fibration}, and written as $A \fib B$.
  Any morphism $i$ is called an \emph{acyclic cofibration} and written $i : X \trivcofib Y$ if it has the left lifting property with respect to all fibrations, meaning that every commutative square
  \begin{center}
  \begin{tikzpicture}[align=left, node distance=1.2cm]
  \node [](XX) {$X$}; 
  \node [below of=XX](XY) {$Y$}; 
  \node [right of=XX](XA) {$A$}; 
  \node [below of=XA](XB) {$B$}; 
  
  \draw[>->] (XX) to node {$i$\;\; $\sim$} (XY); % winner of ``ugliest hack'' prize
  \draw[->] (XX) to node {} (XA);
  \draw[->>] (XA) to node [right] {$f$} (XB);
  \draw[->] (XY) to node {} (XB);
  \end{tikzpicture}
  \end{center}
  has a (not necessarily unique) filler $h:Y \to A$ that makes both triangles commute.
  \item All pullbacks of fibrations exist and are fibrations.
  \item For every fibration $g:A \fib B$, the pullback functor $g^\star : \ttfc\slash B \to \ttfc \slash A$ has a partial right adjoint $\Pi_g$, defined at all fibrations over $A$, whose values are fibrations over $B$. %shulman remarks that this implies that acyclic cofibs are stable under pullback 
  \item For any fibration $A \fib B$, the diagonal morphism $A \to A \times_B A$ factors as $A \trivcofib P_BA \fib A \times_B A$, with the first map being an acyclic cofibration and the second being a fibration. \label{item:ttfc-def-factor}
  \item For any $A \fib B$, there exists a factorisation as in \eqref{item:ttfc-def-factor} such that in any diagram of the shape
  \begin{center}
  \begin{tikzpicture}[align=left, node distance=1.2cm]
  \node [](X) {$X$}; 
  \node [right of=X](Y) {$Y$}; 
  \node [right of=Y](Z) {$Z$}; 
  \node [below of=X](A) {$A$}; 
  \node [below of=Y](B) {$P_B A$}; 
  \node [below of=Z](C) {$B$}; 
  
  \draw[>->] (A) to node [above] {$\sim$} (B);
  \draw[->>] (B) to node {} (C);
  \draw[->] (X) to node {} (Y);
  \draw[->] (Y) to node {} (Z);
  \draw[->] (X) to node {} (A);
  \draw[->] (Y) to node {} (B);
  \draw[->] (Z) to node {} (C);
  \end{tikzpicture}
  \end{center}
  we have the following: if both squares are pullback squares (which implies that $Y \to Z$ and $X \to Z$ are fibrations), then $X \to Y$ is an acyclic cofibration.
 \end{enumerate}
\end{definition}
\begin{remark}
 From the above definition, it follows that \emph{every} morphism factors as an acyclic cofibration followed by a fibration.
 Shulman's proof~\cite[Lemma 2.4]{shulman_inversediagrams}, a translation of the proof by Gambino and Garner~\cite{gambinoGarner_ITwfs} into category theory, relies on the fact that every morphism $A \to \unit$ is a fibration (``all objects are fibrant'') by definition.
\end{remark}

The example of a type-theoretic fibration category that we mainly have in mind is~\cite[Example 2.9]{shulman_inversediagrams}, the category of contexts of a dependent type theory with a unit type, $\Sigma$- and $\Pi$-types, and identity types.
The unit, $\Sigma$- and $\Pi$-types are required to satisfy judgmental $\eta$-rules.
Because of these $\eta$-rules, we do not need to talk about contexts; we can view every object of the category as a nested $\Sigma$-type with some finite number of components. Of course, the terminal object is the unit type. The subset of fibrations is the closure of the projections under isomorphisms. One nice property is that the $\eta$-rules also imply that we can assume that all fibrations are a projection of the form $\big(\sm{x:X}Y(x)\big) \fib X$.
Pullbacks correspond to substitutions, and the partial functor $\Pi_g$ comes from dependent function types.
For any fibration $f : A \fib B$, the factorisation in item \eqref{item:ttfc-def-factor} can be obtained using the intensional identity type: if $B$ is the unit type, then the factorisation can be written as $A \trivcofib \big(\sm{(x,y):A\times A}\id{x}{y}\big) \fib A \times A$, and similar otherwise (see~\cite{gambinoGarner_ITwfs}). The acyclic cofibration is given by reflexivity. 

Note that the type theory specified in the standard reference on HoTT~\cite[Appendix A.2]{HoTTbook}
does not have judgmental $\eta$-rules for $\Sigma$ and $\unit$.
This does not constitute a problem when we want to apply our results to homotopy type theory.
First, it appears to be an arbitrary choice of~\cite{HoTTbook} to not include these judgmental $\eta$-rules in the theory.
There does not seem to be any fundamental difficulty with them, and the implementations Agda and Coq do indeed support them.
Second, as Shulman states, these judgmental $\eta$-rules are convenient but not really necessary~\cite[Example 2.9]{shulman_inversediagrams}.
This is certainly true for our constructions that we can do with finitely nested $\Sigma$-types, although it is likely that the assumption of $\oppo\omega$-limits (infinitely nested $\Sigma$-types) would have to be phrased more carefully in the absence of judgmental $\eta$-conversions (see our proof of \cref{thm:result}).

Given a type-theoretic fibration category $\ttfc$ with an object $A$, we can think of $A$ as a context.
Type theoretically, we can work in the theory over the fixed context $A$.
Categorically, this means we work in the slice over $A$.
The slice category $\ttfc \slash A$ is not necessarily a type-theoretic fibration category as not all morphisms $B \to A$ are fibrations, but we can simply restrict ourselves to those that are.
Shulman denotes this full subcategory of $\ttfc \slash A$ by $\fibslice \ttfc A$.
The observation that the (restricted) slice of a type-theoretic fibration category is again a type-theoretic fibration category allows us that, when we want to do an ``internally expressible'' construction for any general given fibration, we can without loss of generality assume that the codomain of the fibration is the unit type.
This corresponds to the fact that an ``internal'' construction in type theory still works if we add additional assumptions to the context (which are then simply ignored by the construction).

It is not exactly true that a type-theoretic fibration category has an intensional dependent type theory as its internal language due to the well-known issue that substitution in type theory is strictly functorial. 
Fortunately, coherence theorems (see e.g.~\cite{awodey_natural,lumsdaineWarren_overlooked}) can be applied to solve this problem, and we do not worry about it but simply refer to Shulman's explanation \cite[Chapter 4]{shulman_inversediagrams}. 
The crux is that, disregarding these coherence issues, the syntactic category of the dependent type theory with $\unit$, $\Sigma$, $\Pi$, and identity types is essentially the initial type-theoretic fibration category. 
A consequence we will exploit heavily is that, when reasoning about type-theoretic fibration categories, we can use type-theoretic constructions freely as long as they can be performed using $\unit$, $\Pi$, $\Sigma$, and identity types. 
For example, the same notion of function extensionality and type equivalence $A \eqvsym B$ can be defined.
This means, of course, that we have to be very careful with the terminology. 
We call a morphism that is an equivalence in the type-theoretic sense a \emph{homotopy equivalence}, written $A \arrowsim B$, while an \emph{isomorphism} is really an isomorphism in the usual categorical sense. 
Note that any isomorphism is not only a fibration by definition, but it is automatically an acyclic cofibration, and acyclic cofibrations are further automatically homotopy equivalences.
Further, it is natural to introduce the following terminology:
\begin{definition}[acyclic fibration]
 We say that a morphism is an \emph{acyclic fibration} if it is a fibration and a homotopy equivalence.
\end{definition}
An important property to record is that acyclic fibrations are stable under pullback~\cite[Corollary 3.12]{shulman_inversediagrams}.
In diagrams, we write $A \trivfib B$ for acyclic fibrations.

\vspace*{0.2cm}

\subparagraph{\textbf{Inverse categories and Reedy fibrant diagrams.}}
For objects $x$ and $y$ of a category, write $y \prec x$ if $y$ receives a nonidentity morphism from $x$ (and $y \preceq x$ if $y \prec x$ or $y \jdeq x$).
A category $\I$ is called an \emph{inverse category} (also sometimes called \emph{one-way category}) if the relation $\prec$ is well-founded.
In this case, the \emph{ordinal rank} of an object $x$ in $\I$ is defined by 
\begin{equation}
 \rho(x) \defeq \sup_{y \prec x}(\rho(y) + 1).
\end{equation}
As described by Shulman \cite[Section 11]{shulman_inversediagrams}, diagrams on $\I$ can be constructed by well-founded induction in the following way.
If $x$ is an object, write $x \sslash \I$ for the full subcategory of the co-slice category $x \slash \I$ which excludes only the identity morphism $\idmorph x$.
Consider the full subcategory $\Set{y | y \prec x} \subset \I$. There is the forgetful functor $U : x \sslash \I \to \Set{y | y \prec x}$, mapping any $x \xrightarrow f y$ to its codomain $y$.
If further $A$ is a diagram in a type-theoretic fibration category $\ttfc$ that is defined on this full subcategory, if the limit 
\begin{equation}
 M^A_x \defeq \lim_{x \sslash \I} (A \circ U).
\end{equation}
exists, it it called the corresponding \emph{matching object}.
To extend the diagram $A$ to the full subcategory $\Set{y | y \preceq x} \subset \I$, it is then sufficient to give an object $A_x$ and a morphism $A_x \to M^A_x$.
The diagram $A : \I \to \ttfc$ is \emph{Reedy fibrant} if all matching objects $M^A_x$ exist and all the maps $A_x \to M^A_x$ are fibrations.
We use the fact that fibrations can be regarded as ``one-type projections'' in the following way:

\begin{definition}[Decomposition in matching object and fibre] \label{notation:match-and-fibre}
 If $A : \I \to \ttfc$ is a Reedy fibrant diagram, we write (as said above) $M^A_x$ for its matching objects, and $F^A(x,m)$ for the fibre over $m$; that is, we have
 \begin{equation}
  A_x \; \cong \; \sm{m:M^A_x}F^A(x,m).
 \end{equation}
\end{definition}

There is the more general notion of a \emph{Reedy fibration} (a natural transformation between two diagrams over $\I$ with certain properties), so that a diagram is Reedy fibrant if and only if the unique transformation to the terminal diagram is a Reedy fibration.
Further, $\ttfc$ is said to have \emph{Reedy $\I$-limits} if any Reedy fibrant $A : \I \to \ttfc$ has a limit which behaves in the way one would expect; in particular, if a natural transformation between two Reedy fibrant diagrams is levelwise a homotopy equivalence, then the map between the limits is a homotopy equivalence.
We omit the exact definitions as our constructions do not require them and refer to \cite[Chapter 11]{shulman_inversediagrams} for the details instead.
For us, it is sufficient to record that a consequence of the definition of having Reedy $\oppo\omega$-limits is the following:
\begin{lemma} \label{lem:acyclic_limit}
 Let a type-theoretic fibration category $\ttfc$ that has Reedy $\oppo\omega$-limits be given.
 Suppose that
 \begin{equation}
  F \defeq F_{\mathsf 0} \trivfibre F_{\mathsf 1} \trivfibre F_{\mathsf 2} \trivfibre \ldots
 \end{equation}
 is a diagram $F : \oppo\omega \to \ttfc$, where all maps are acyclic fibrations. 
 For each $i$, the canonical map $\lim (F) \to F_i$ is a homotopy equivalence.
\end{lemma}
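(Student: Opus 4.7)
The plan is to compare $F$ with a suitable constant diagram through a levelwise homotopy equivalence, apply the Reedy-limits property to conclude that $\lim F$ is homotopy equivalent to $F_{\mathsf 0}$, and then deduce the general case via two-out-of-three.

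First I would verify that $F$ is Reedy fibrant as a diagram over $\oppo\omega$: for each $n \geq 1$, the slice category $n \sslash \oppo\omega$ has the arrow $n \to n-1$ as a terminal object, so the matching object $M^F_n$ is canonically isomorphic to $F_{n-1}$ and the matching map is the given acyclic fibration $F_n \trivfib F_{n-1}$. Then I would introduce the constant diagram $C : \oppo\omega \to \ttfc$ with $C_n \defeq F_{\mathsf 0}$ and all transition maps the identity; its matching objects are again $F_{\mathsf 0}$ with identity matching maps, so $C$ is Reedy fibrant, and since $\oppo\omega$ is connected, its limit is canonically isomorphic to $F_{\mathsf 0}$.

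Next I would construct a natural transformation $\tau : F \to C$ whose component at $n$ is the composite $F_n \trivfib F_{n-1} \trivfib \cdots \trivfib F_{\mathsf 0}$. This is natural by construction, and each component is a finite composite of acyclic fibrations and hence itself an acyclic fibration (fibrations and homotopy equivalences are each closed under composition), so $\tau$ is levelwise a homotopy equivalence between Reedy fibrant diagrams. Invoking the Reedy $\oppo\omega$-limits property of $\ttfc$ then yields that the induced map $\lim \tau : \lim F \to \lim C \cong F_{\mathsf 0}$ is a homotopy equivalence.

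Finally, for an arbitrary index $i$, the canonical projection $\lim F \to F_{\mathsf 0}$ factors as $(F_i \to F_{\mathsf 0}) \circ (\lim F \to F_i)$ by the defining compatibility of the limit cone. The second factor, being itself a finite composite of acyclic fibrations, is a homotopy equivalence; the whole composite is a homotopy equivalence by the previous step; two-out-of-three for homotopy equivalences in a type-theoretic fibration category then forces $\lim F \to F_i$ to be a homotopy equivalence, as desired. The main thing I would need to verify is that the Reedy framework from Shulman's work supplies exactly the levelwise-equivalence-to-limit-equivalence principle invoked in the key step; once that is granted, the rest of the argument is essentially formal.
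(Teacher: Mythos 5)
Your proof is correct and follows essentially the same route as the paper: both compare $F$ with an (eventually) constant Reedy fibrant diagram via a natural transformation that is a Reedy fibration and levelwise an acyclic fibration, and then invoke the Reedy $\oppo\omega$-limits property. The only cosmetic difference is that the paper replaces $F$ by the diagram that agrees with $F$ up to level $i$ and is constant at $F_i$ thereafter, so the conclusion for each $i$ is immediate, whereas you compare with the diagram constant at $F_{\mathsf 0}$ and transfer to general $i$ by two-out-of-three.
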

\begin{proof}
 Consider the diagram that is constantly $F_i$ apart from a finite part,
 \begin{equation}
  G := F_{\mathsf 0} \trivfibre F_{\mathsf 1} \trivfibre \ldots \trivfibre F_{i-1} \trivfibre F_{i} \trivfibre F_{i} \trivfibre F_{i} \ldots .
 \end{equation}
 There is a canonical natural transformation $F \to G$, induced by the arrows in $F$, which is a Reedy fibration and levelwise an acyclic fibration. 
 It follows directly from the precise definition of Reedy limits \cite[Definition 11.4]{shulman_inversediagrams} that the induced map between the limits $\lim(F) \to F_i$ is a fibration and a homotopy equivalence.
\end{proof}

For later, we further record the following two simple lemmata:
\begin{lemma} 
 If $A : \I \to \ttfc$ is Reedy fibrant, then so is $A \circ U : x \slash \I \to \ttfc$.
\end{lemma}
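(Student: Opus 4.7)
The plan is to check the two conditions defining Reedy fibrancy for $A \circ U$: that $x \slash \I$ is an inverse category, and that every matching object is the target of a fibration. Both will follow by reducing the situation at an object $(x \xrightarrow{f} y)$ of the slice to the already-established situation at $y$ in $\I$.

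First I would verify that $x \slash \I$ is an inverse category. A non-identity morphism $(x \to y) \to (x \to z)$ in $x \slash \I$ is a non-identity morphism $y \to z$ in $\I$ satisfying the triangle, so the relation $\prec$ on $x \slash \I$ is pulled back from $\prec$ on $\I$ via the forgetful functor $U$; well-foundedness therefore transfers. This lets us speak about matching objects of $A \circ U$.

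The key step is the following observation: for any object $\xi \defeq (x \xrightarrow{f} y)$ of $x \slash \I$, the forgetful functor induces an isomorphism of categories
\begin{equation}
V_\xi \; : \; \xi \sslash (x \slash \I) \; \xrightarrow{\cong} \; y \sslash \I,
\end{equation}
sending a non-identity morphism $\xi \to (x \xrightarrow{g \circ f} z)$ to its underlying morphism $g : y \to z$. An inverse is given by $g \mapsto (g, g \circ f)$, using the fact that the triangle is forced. Moreover, $V_\xi$ strictly commutes with the two forgetful functors to $\I$, so $(A \circ U) \circ U_\xi \jdeq A \circ U_y \circ V_\xi$ as diagrams, where $U_\xi$ and $U_y$ are the forgetful functors from the respective co-slices.

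Consequently, the matching object $M^{A \circ U}_\xi$, computed as the limit of $(A \circ U) \circ U_\xi$, exists and is canonically isomorphic to $M^A_y$, because the isomorphism $V_\xi$ of diagram shapes induces an isomorphism of the limits. Under this identification, the matching map $(A \circ U)_\xi \to M^{A \circ U}_\xi$ is literally the matching map $A_y \to M^A_y$, which is a fibration by the Reedy fibrancy of $A$. This holds for every $\xi$, so $A \circ U$ is Reedy fibrant. The only subtle point is the identification of the two co-slice categories; once that is written out, everything else is automatic, and I do not anticipate any serious obstacle.
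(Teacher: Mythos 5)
Your proof is correct and is essentially the paper's own argument: the paper's one-line proof rests precisely on the isomorphism $k \sslash (x \sslash \I) \cong y \sslash \I$ (attributed to Shulman's Lemma 11.8), which identifies matching objects and matching maps of $A \circ U$ with those of $A$. You have simply written out the details of that identification, including the transfer of well-foundedness.
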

\begin{proof}
 This is due to the fact that for a (nonidentity) morphism $k : x \to y$ in $\I$ the categories $k \sslash (x \sslash \I)$ and $y \sslash \I$ are isomorphic.
 This argument is already used by Shulman (\cite[Lemma 11.8]{shulman_inversediagrams}).
\end{proof}
\begin{lemma} \label{lem:change-notation-poset}
 If $\I$ is 
 a poset (a partially ordered set), $x$ an object, $A : \I \to \ttfc$ a diagram, and the limit $\lim_{x \sslash \I} (A \circ U)$ exists, then $\lim_{\Set{y | y \prec x}} A$ exists as well and both are isomorphic. \qed
\end{lemma}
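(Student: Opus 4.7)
The plan is to observe that when $\I$ is a poset, the forgetful functor $U : x \sslash \I \to \Set{y | y \prec x}$ is in fact an isomorphism of categories, and then to note that isomorphic diagrams have isomorphic limits (one exists if and only if the other does, and they agree). Under this identification, the diagram $A \circ U$ on $x \sslash \I$ is literally transported to the restriction of $A$ to $\Set{y | y \prec x}$, so the two limits coincide.

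To check that $U$ is an isomorphism, I would first verify the bijection on objects. An object of $x \sslash \I$ is a pair $(y, f)$ with $f : x \to y$ a nonidentity morphism. Since $\I$ is a poset, there is at most one morphism between any two objects of $\I$, so the datum of $f$ is redundant once $y$ is fixed; conversely, the existence of such a (necessarily unique) nonidentity $f$ is precisely the condition $y \prec x$. Hence $U$ restricts to a bijection between the objects of $x \sslash \I$ and those of $\Set{y | y \prec x}$.

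Next, on morphisms, a morphism in $x \sslash \I$ from $(y, f)$ to $(z, g)$ is a morphism $h : y \to z$ in $\I$ satisfying $h \circ f = g$. The commutativity is automatic in a poset, since both $h \circ f$ and $g$ are morphisms $x \to z$ and there is at most one such. Thus $U$ is also a bijection on hom-sets, so it is an isomorphism of categories. The claim follows because a limit of a diagram is, by its universal property, preserved (up to canonical isomorphism) under precomposition with an isomorphism of the indexing category. There is no real obstacle; the content of the lemma is simply that, in the poset case, the notational overhead of co-slices collapses to the already-familiar restriction to $\Set{y | y \prec x}$.
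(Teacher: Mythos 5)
Your proposal is correct and is exactly the argument the paper leaves implicit (the lemma is stated with no written proof): in a poset the forgetful functor $U : x \sslash \I \to \Set{y \mid y \prec x}$ is an isomorphism of categories, since the structure morphism $f : x \to y$ is unique when it exists and all commutativity conditions are automatic, so the two limits are the same up to canonical isomorphism. No gaps.
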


An inverse category $\I$ is \emph{admissible} for $\ttfc$ if $\ttfc$ has all Reedy $(x\sslash \I)$-limits.
If $\I$ is finite, then any type-theoretic fibration category has Reedy $\I$-limits by \cite[Lemma 11.8]{shulman_inversediagrams}.
From the same lemma, it follows that for all constructions that we are going to do, it will be sufficient if $\ttfc$ has Reedy $\oppo\omega$-limits.
Further, in all our cases of interest, all co-slices of $\I$ are finite, and $\ttfc$ is automatically admissible.

Because of the above, let us fix the following:
\begin{notation}
 For the rest of this article, let $\ttfc$ be a type-theoretic fibration category with Reedy $\oppo\omega$-limits, which further satisfies function extensionality.
 We refer to the objects of $\ttfc$ (which are by definition always fibrant) as \emph{types}.
 Let us further introduce the term \emph{tame category}.
 We say that an inverse category is a \emph{tame category} if all co-slices $x \slash \I$ are finite (which implies that $\rho(x)$ is finite for all objects $x$) and, for all $n$, the set of objects at ``level'' $n$, that is $\Set{x \in \I | \rho(x) \jdeq n}$, is finite. The important property is that a tame category $\I$ is admissible for $\ttfc$, and that $\ttfc$ has Reedy $\I$-limits.
 Thus, tame categories make it possible to perform constructions without worrying whether required limits exist, and we will not be interested in any non-tame inverse categories.
\end{notation}

\section{Subdiagrams} \label{sec:subdiagrams}

Let $\I$ be a tame category.
We are interested in full subcategories of $\I$, and we mean ``subcategory'' in the strict sense that the set of objects is a subset of the set of objects of $\I$.
We say that a full subcategory $J$ of $\I$ is downwards closed if, for any pair $x,y$ of objects in $\I$ with $y \prec x$, if $x$ is in $J$, then so is $y$.
The full downwards closed subcategories of $\I$ always form a poset $\mathsf{Sub}(\I)$, with an arrow $J \to J'$ if $J'$ is a subcategory of $J$. 

It is easy to see that the poset $\mathsf{Sub}(\I)$ has all limits and colimits. For example, given downwards closed full subcategories $J$ and $J'$, their product is given by taking the union of their sets of objects. We therefore write $J \cup J'$. 
Dually, coproducts are given by intersection and we can write $J \cap J'$.
An object $x$ of $\I$ generates a subcategory $\Set{y | y \preceq x}$, for which we write $\overline x$.

If $A : \I \to \ttfc$ is a Reedy fibrant diagram and $\ttfc$ has Reedy $\I$-limits, we can consider the functor
\begin{equation}
 \lim_{-}A : \mathsf{Sub}(\I) \to \ttfc
\end{equation}
which maps any downwards closed full subcategory $J \subseteq \I$ to $\lim_{J}A$, the Reedy limit of $A$ restricted to $J$.

\begin{lemma} \label{lem:continuous-functor}
 Let $\I$ be a tame category and $J,K$ two downwards closed subcategories of $\I$. 
 Then, the functor $\lim_{-}A$ maps the pullback square
 \begin{center}
\begin{tikzpicture}[align=left, node distance = 1.5cm]
  \node [color=black](A1) {$J \cup K$}; 
  \node [below of=A1, node distance=1cm](A2) {$J$}; 
  \node [right of=A1](A3) {$K$}; 
  \node [right of=A2](A4) {$J \cap K$}; 
  
  \draw[->] (A1) to node {} (A2);
  \draw[->] (A1) to node {} (A3);
  \draw[->] (A2) to node {} (A4);
  \draw[->] (A3) to node {} (A4);
\end{tikzpicture}
\end{center}
in $\mathsf{Sub}(\I)$ to a pullback square in $\ttfc$.
\end{lemma}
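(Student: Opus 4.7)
My plan is to verify the universal property of the pullback in $\ttfc$ directly, exploiting the structural constraint imposed by downward closure. The first observation I would establish is that every object of $J \cup K$ lies in $J$, in $K$, or in both, and that every non-identity morphism $x \to y$ of $\I$ with $x$ an object of $J \cup K$ must stay inside whichever of $J$ or $K$ contains $x$: by downward closure of $J$, if $x \in J$ then also $y \in J$, and analogously for $K$. Consequently, the restriction of $A$ to $J \cup K$ is completely determined by its restrictions to $J$ and to $K$, which automatically agree on the overlap $J \cap K$, and there is no ``cross'' morphism between $J \setminus K$ and $K \setminus J$ over which a further compatibility condition would have to be imposed.

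I would then translate this structural fact into a statement about cones. A cone over $J \cup K$ with apex $X \in \ttfc$ is a family of maps $(X \to A_x)_{x \in J \cup K}$ compatible with all morphisms of $\I$; by the observation above, this data splits uniquely into a cone over $J$ and a cone over $K$ whose restrictions to $J \cap K$ coincide. Conversely, any pair of cones over $J$ and $K$ that agree on $J \cap K$ reassembles into a cone over $J \cup K$, precisely because no extra compatibility can be required. This is the universal property exhibiting $\lim_{J \cup K} A$ as a pullback of $\lim_J A \to \lim_{J \cap K} A \leftarrow \lim_K A$ in the ambient category.

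For the image square to be a genuine pullback square \emph{in $\ttfc$}, I would finally need to know that the pullback in question actually exists, which by the definition of a type-theoretic fibration category reduces to checking that at least one of the restriction maps, say $\lim_J A \to \lim_{J \cap K} A$, is a fibration. This is a standard feature of Reedy fibrant diagrams over admissible inverse categories: restriction along the inclusion of a downward closed full subcategory yields a fibration between the Reedy limits, obtained by adjoining the missing objects one at a time in order of rank, using at each stage only the matching-object fibration $A_x \fib M^A_x$ and the fact that pullbacks of fibrations are fibrations. I expect this last point — properly invoking the Reedy limit machinery of \cite{shulman_inversediagrams} — to be the one requiring the most care; once granted, the universal property established in the second paragraph then really does identify $\lim_{J \cup K} A$ with the pullback, and the rest of the argument is formal.
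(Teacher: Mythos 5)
Your proof is correct and takes essentially the same approach as the paper: the paper's entire proof is the one-line observation that a cone $X \to A|_{J\cup K}$ corresponds to a pair of cones $X \to A|_J$ and $X \to A|_K$ coinciding on $J \cap K$, which is exactly your first two paragraphs (with the downward-closure observation spelling out why no cross-compatibility condition arises). Your third paragraph is superfluous: exhibiting $\lim_{J\cup K}A$ as satisfying the universal property of the cospan already makes the square a pullback square, the existence of all four limits being guaranteed by tameness, so no fibration condition needs to be verified here — indeed, the statement that these restriction maps are fibrations is the \emph{subsequent} \cref{lem:sub-gives-fibs}, whose proof in the paper relies on the present lemma.
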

\begin{proof}
 For an object $X$, a cone $X \to A|_{J\cup K}$ corresponds to a pair of two cones, $X \to A|_J$ and $X \to A|_K$, which coincide on $J \cap K$.
\end{proof}

\begin{lemma} \label{lem:sub-gives-fibs}
 Under the same assumptions as before, the functor $\lim_ {-}A$ maps all morphisms to fibrations. In other word, if $K$ is a downwards closed subcategory of the inverse category $J$, then
 \begin{equation}
  \lim_J A \fib \lim_K A
 \end{equation}
 is a fibration.
\end{lemma}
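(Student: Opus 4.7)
The strategy is to realise $\lim_J A \fib \lim_K A$ as an $\omega$-indexed composite of fibrations, built by adjoining the objects of $J \setminus K$ to $K$ one at a time in order of increasing rank.

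\emph{Single-object step.} Let $L$ be a downwards closed subcategory of $J$ containing every strict predecessor of some $x \in J \setminus L$. Then in $\mathsf{Sub}(\I)$ we have $L + x = L \cup \overline{x}$ and $L \cap \overline{x} = \overline{x} - x$, so \cref{lem:continuous-functor} supplies a pullback square in which the map $\lim_{L+x} A \to \lim_L A$ is a pullback of $\lim_{\overline{x}} A \to \lim_{\overline{x} - x} A$. It therefore suffices to show that this latter map is a fibration. Unwinding universal properties, a cone on $\overline{x}$ with apex $X$ is precisely a map $X \to A_x$ together with a cone $X \to \lim_{\overline{x} - x} A$ whose induced maps into the matching object $M^A_x$ agree (the map $\lim_{\overline{x} - x} A \to M^A_x$ being the canonical one obtained by precomposition with the forgetful functor $x \sslash \I \to \overline{x} - x$). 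Hence $\lim_{\overline{x}} A \cong A_x \times_{M^A_x} \lim_{\overline{x} - x} A$, and because $A_x \fib M^A_x$ is a fibration (Reedy fibrancy) stable under pullback, we obtain $\lim_{\overline{x}} A \fib \lim_{\overline{x} - x} A$, and hence $\lim_{L+x} A \fib \lim_L A$.

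\emph{Reducing to a tower.} Tameness of $J$ lets me set $J^{(n)} := K \cup \{\,y \in J \mid \rho(y) < n\,\}$. Each $J^{(n)}$ is downwards closed in $J$, $J^{(0)} = K$, and $\bigcup_{n} J^{(n)} = J$ since every rank is finite. The difference $J^{(n+1)} \setminus J^{(n)}$ is finite, being contained in the finite level-$n$ stratum of $J$, so its objects can be adjoined one by one in any order; at each insertion the newly added object has all its strict predecessors (of strictly smaller rank) already present, so the single-object step applies. Composing finitely many fibrations yields $\lim_{J^{(n+1)}} A \fib \lim_{J^{(n)}} A$ for every $n$. The resulting $\oppo\omega$-tower $\ldots \fib \lim_{J^{(2)}} A \fib \lim_{J^{(1)}} A \fib \lim_K A$ is Reedy fibrant with limit $\lim_J A$, and the existence of Reedy $\oppo\omega$-limits in $\ttfc$ (together with the fact that the canonical projection from such a limit to its zeroth stage is a fibration) delivers the desired $\lim_J A \fib \lim_K A$.

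\emph{Expected obstacle.} The only substantive point is the single-object pullback identification: when $\I$ is not a poset, one cannot directly identify $\lim_{\overline{x} - x} A$ with $M^A_x$ (as one could via \cref{lem:change-notation-poset}), so one has to pass through the natural map $\lim_{\overline{x} - x} A \to M^A_x$ and verify by hand the pullback presentation $\lim_{\overline{x}} A \cong A_x \times_{M^A_x} \lim_{\overline{x} - x} A$. Everything else is bookkeeping, with tameness supplying exactly the finiteness that splits the passage from $K$ to $J$ into an $\omega$-tower of finite composites of fibrations.
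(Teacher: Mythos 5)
Your proof is correct and follows essentially the same route as the paper: reduce to adjoining a single object $x$, use \cref{lem:continuous-functor} to pull back along $\lim_{\overline{x}}A \to \lim_{\overline{x}-x}A$, and identify $\lim_{\overline{x}}A$ as the pullback of the Reedy fibration $A_x \fib M^A_x$ along the canonical map $\lim_{\overline{x}-x}A \to M^A_x$. Your explicit rank-indexed $\oppo\omega$-tower just spells out what the paper compresses into the remark that (possibly infinite) composites of fibrations are fibrations.
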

\begin{proof}
 We only need to consider the case that $J$ has exactly one object that $K$ does not have, say $J \jdeq K +x$, because the composition of fibrations is a fibration (this is true even for ``infinite compositions'', with the same short proof as \cref{lem:acyclic_limit}).
 Further, we may assume that all objects of $J$ are predecessors of $x$, i.e.\ we have $\overline x \jdeq J$; otherwise, we could view $J \to K$ as a pullback of $\overline x \to \overline x - x$ and apply \cref{lem:continuous-functor}.
 
 The cone $\lim_K A \to A |_K$ gives rise to a cone $\lim_K A \to (A \circ U)|_{x \sslash K}$ (the morphism into $x \xrightarrow f y$ is given by the morphism into $y$), and we thereby get a morphism $m : \lim_K A \to M^A_x$.
 If we pull the fibration $A_x \fib M^A_x$ back along the morphism $m$, we get a fibration $P \fib \lim_K A$, and it is easy to see that $P \cong \lim_J A$.
\end{proof}

\begin{remark}
 The above proof yields a description in type-theoretic notation of the fibration $\lim_{K+x} A \fib \lim_K A$. It can be written as
 \begin{equation}
  \sm{k : \lim_K A} F^A(x, m(k)) \fib \lim_K A.  
 \end{equation}
 This remains true even if not all objects in $J$ are predecessors of $x$.
\end{remark}

\section{Equality Diagrams} \label{sec:equality}

Given any tame category $\I$ and a fixed type $B$ in $\ttfc$, the diagram 
$\I \to \ttfc$ that is constantly $B$ is, in general, not Reedy fibrant. 
Fortunately, the axioms of a type-theoretic fibration category allow us to define a \emph{fibrant replacement} (see, for example, Hoveys textbook~\cite{hovey2007model}).
We call the resulting diagram, which we construct explicitly, the \emph{equality diagram} of $B$ over $\I$.
We define by simultaneous induction:
\begin{enumerate}[label={(\roman*)},ref={\roman*}]
 \item a diagram $\Ee{B} : \I \to \ttfc$, the \emph{equality diagram}
 \item a cone $\eta : B \to \Ee{B}$ (i.e.\ a natural transformation from the functor that is constantly $B$ to $\Ee{B}$)
 \item a diagram $M^{\Ee{B}} : \I \to \ttfc$ (the diagram of matching objects)
 \item an auxiliary cone $\tilde \eta : B \to M^{\Ee{B}}$.
 \item a natural transformation $\iota : \Ee{B} \to M^{\Ee{B}}$
\end{enumerate}
such that $\iota \circ \eta$ equals $\tilde \eta$.

Assume that $i$ is an object in $\I$ such that the five components are defined for all predecessors of $i$. 
This is in particular the case if $i$ has no predecessors. We define the matching object
 $M^{\Ee{B}}_i \defeq \lim_{i \sslash \I} \Ee{B}$ as discussed in \cref{sec:ttfcs}.
The universal property of this limit yields
\begin{itemize}
 \item for every non-identity morphism $f : i \to j$, an arrow $\overline f : M^{\Ee{B}}_i \to \Ee{B}_j$, which lets us define $M^{\Ee{B}}(f)$ to be $\iota_j \circ \overline f$; and
 \item an arrow $\tilde \eta_i : B \to M^{\Ee{B}}_i$ such that, for every non-identity $f : i \to j$ as in the first point, we have that $\overline f \circ \tilde \eta_i$ equals $\eta_j$. 
\end{itemize}
We further define $\Ee{B}$ on objects by
\begin{equation} \label{eq:define-E-by-fibres}
 \Ee{B}_i \defeq \sm{m : M^{\Ee{B}}_i} \sm{x:B} \id{\tilde \eta_i(x)}{m}. 
\end{equation}
This allows us to choose the canonical projection map for $\iota_i$, and we can define $\Ee{B}$ on 
non-identity morphisms by
\begin{equation} 
 \Ee{B}(f) \defeq \overline f \circ \iota_i.
\end{equation}
Finally, we set
\begin{equation}
 \eta_i(x) \defeq (\tilde{\eta}_i(x), x , \refl {\tilde{\eta}_i(x)}).
\end{equation}
By construction, $\eta$, $\tilde \eta$, and $\iota$ satisfy the required naturality conditions.

\begin{lemma} \label{lem:eta-is-equiv}
 For all $i:\I$, the morphism $\eta_i : B \to \Ee{B}_i$ is a homotopy equivalence.
\end{lemma}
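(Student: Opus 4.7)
The plan is to show that $\eta_i$ is a homotopy equivalence by recognising the type $\Ee{B}_i$ as $B$ bundled with a contractible "singleton" component, with $\eta_i$ being the canonical inclusion. Concretely, $\Ee{B}_i$ was defined as
\[
 \Ee{B}_i \;\cong\; \sm{m : M^{\Ee{B}}_i} \sm{x:B} \id{\tilde \eta_i(x)}{m},
\]
and $\eta_i$ sends $x$ to $(\tilde\eta_i(x), x, \refl{\tilde\eta_i(x)})$. Since $\ttfc$ is a type-theoretic fibration category and our construction only uses $\unit$, $\Sigma$, $\Pi$, and identity types, we may reason internally.

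First, I would define the projection $p_i : \Ee{B}_i \to B$ by $p_i(m,x,q) \defeq x$. One composite is trivial: $p_i \circ \eta_i$ equals $\idmorph B$ judgmentally by the $\beta$-rule for $\Sigma$.

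For the other composite, I would reorder the $\Sigma$-components (swapping the $m$- and $x$-components), obtaining
\[
 \Ee{B}_i \;\eqvsym\; \sm{x:B}\sm{m : M^{\Ee{B}}_i} \id{\tilde\eta_i(x)}{m}.
\]
The inner $\Sigma$-type is a singleton in the sense defined at the end of the notation discussion, hence contractible, so the whole type is homotopy equivalent to $B$ via the first projection. Tracing through this chain of equivalences shows that the composite $\eta_i \circ p_i$ is homotopic to $\idmorph{\Ee{B}_i}$; equivalently, one verifies directly by path induction on $q : \tilde\eta_i(x) = m$ that $(m,x,q) = (\tilde\eta_i(x), x, \refl{\tilde\eta_i(x)})$ in $\Ee{B}_i$.

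I do not anticipate a real obstacle: the essential content is singleton-contractibility plus $\Sigma$-swapping, both of which are internally derivable in $\ttfc$. The only thing to be mildly careful about is that we treat $\eta_i$ as a morphism in $\ttfc$, not a mere type-theoretic term: but since the coherence results mentioned in \cref{sec:ttfcs} allow us to freely interpret $\unit,\Sigma,\Pi,\mathsf{Id}$-constructions in $\ttfc$, the internal homotopy equivalence translates into a homotopy equivalence of objects of $\ttfc$.
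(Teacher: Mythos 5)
Your proposal is correct and matches the paper's own argument: the paper likewise reorders the $\Sigma$-components of $\Ee{B}_i \jdeq \sm{m : M^{\Ee{B}}_i}\sm{x:B}\id{\tilde\eta_i(x)}{m}$ to exhibit a contractible singleton and concludes $\Ee{B}_i \eqvsym B$. Your extra details (the explicit retraction $p_i$, the path-induction check that the equivalence is realised by $\eta_i$, and the remark on interpreting the internal argument in $\ttfc$) merely spell out what the paper leaves implicit.
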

\begin{proof}
 This is due to the fact that
\begin{equation}
 \begin{alignedat}{2}
   \Ee{B}_i \quad & \jdeq \quad && \sm{m : M^{\Ee{B}}_i} \sm{x:B} \id{\tilde \eta_i(x)}{m} \\
   & \eqvsym && \sm{x:B} \sm{m : M^{\Ee{B}}_i} \id{\tilde \eta_i(x)}{m} \\
   & \eqvsym && B, 
 \end{alignedat}
\end{equation}
where the last step uses that the last two $\Sigma$-components have the form of a singleton.
\end{proof}

The proceeding lemma tells us that $\Ee{B}$ is levelwise homotopy equivalent to the constant diagram. The crux is that, unlike the constant diagram, $\Ee{B}$ is Reedy fibrant by construction, i.e.\ a \emph{fibrant replacement} in the usual terminology of model category theory.

\begin{lemma} \label{lem:Ef-is-equiv}
 For all morphisms $f$ in the category $\I$, the fibration $\Ee{B}(f)$ is a homotopy equivalence. % it is a fibration because any morphism is! Really?
\end{lemma}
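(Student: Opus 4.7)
The plan is to reduce this immediately to the previous lemma via naturality of $\eta$ and the 2-out-of-3 property for homotopy equivalences.

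By construction, $\eta : B \Rightarrow \Ee{B}$ is a natural transformation from the constant functor at $B$ to $\Ee{B}$. Thus, for every morphism $f : i \to j$ in $\I$, we have the commuting triangle
\begin{equation}
\Ee{B}(f) \circ \eta_i \; = \; \eta_j.
\end{equation}
By \cref{lem:eta-is-equiv}, both $\eta_i$ and $\eta_j$ are homotopy equivalences.

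The type-theoretic fibration category $\ttfc$ admits a notion of homotopy equivalence $\arrowsim$ that is internally expressible using only $\unit$, $\Sigma$, $\Pi$, and identity types (together with function extensionality, which we assume). Since this notion is the usual type-theoretic one, it satisfies the 2-out-of-3 property: if two of the three morphisms in a commuting triangle are homotopy equivalences, so is the third. Applying this to the triangle above, with $\eta_i$ and $\eta_j$ known to be homotopy equivalences, we conclude that $\Ee{B}(f)$ is a homotopy equivalence. Moreover, $\Ee{B}(f) = \overline{f} \circ \iota_i$ is a fibration by construction (it factors through the matching object projection and the canonical map to a component of the limit), so it is an acyclic fibration, as claimed.

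The only step that requires genuine care is invoking 2-out-of-3; I expect no obstacle here, since this is a standard property of homotopy equivalences in a type-theoretic fibration category (it follows, for instance, from the fact that the internal type-theoretic $\isequiv$ predicate is preserved under composition and satisfies cancellation, exactly as in the book \cite{HoTTbook}). Everything else is a direct consequence of the naturality built into the inductive construction of $\Ee{B}$ and $\eta$.
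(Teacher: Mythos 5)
Your proof is correct and is exactly the argument the paper gives: naturality of $\eta$ yields $\Ee{B}(f) \circ \eta_i \jdeq \eta_j$, and then \cref{lem:eta-is-equiv} together with the 2-out-of-3 property for homotopy equivalences finishes the claim. The additional remarks about $\Ee{B}(f)$ being a fibration are already built into the construction and need no further justification.
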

\begin{proof}
 If $f:i \to j$ is a morphism in $\I$, we have $\Ee{B}(f) \circ \eta_i \jdeq \eta_j$ due to the naturality of $\eta$. The claim than follows by \cref{lem:eta-is-equiv} as homotopy equivalences satisfy ``2-out-of-3''.
\end{proof}

\section{The Equality Semi-simplicial Type} \label{sec:equality-sst}

Let $\deltplus$ be the category of non-zero finite ordinals and strictly increasing maps between them.
We write $\ordinal k$ for the objects, $\ordinal k \jdeq \Set{0,1,\ldots,k}$,
and $\incrmap k m$ for the hom-sets.
We can now turn to our main case of interest, which is the tame category $\I \jdeq \deltop$.  
In this case, we call $\Ee{B}$ the \emph{equality semi-simplicial type} of the (given) type $B$.
We could write down the first few values of $M^{\Ee{B}}_{\ordinal n}$ and $\Ee{B}_{\ordinal n}$ explicitly. 
However, these type expressions would look rather bloated. 
More revealing might be the homotopically equivalent presentation in \cref{fig:e-m-ideal}.
\begin{figure}[H]
 \begin{alignat*}{3}
   &M^{\Ee{B}}_{\ordinal 0} \quad && \jdeq \quad && \unit \\
   &{\Ee{B}}_{\ordinal 0} \quad && \eqvsym \quad && B \\
   &M^{\Ee{B}}_{\ordinal 1} \quad && \eqvsym \quad && B \times B \\
   &\Ee{B}_{\ordinal 1} \quad && \eqvsym \quad && \sm{b^1,b^2 : B} \id {b^1}{b^2}\\
   &M^{\Ee{B}}_{\ordinal 2} \quad && \eqvsym \quad && \sm{b^1, b^2, b^3 : B} (\id{b^1}{b^2}) \times (\id{b^2}{b^3}) \times (\id{b^1}{b^3}) \\
   &\Ee{B}_{\ordinal 2} \quad && \eqvsym \quad && \sm{b^1, b^2, b^3 : B} \sm{p: \id{b^1}{b^2}} \sm{q : \id{b^2}{b^3}} \sm{ r : \id{b^1}{b^3}}     p \ct q = r. 
 \end{alignat*}
 \caption{The ``nicer'' formulation of the equality semi-simplicial type.
 The equivalences can be shown easily using the contractibility of singletons.} \label{fig:e-m-ideal}
\end{figure}
We think of $\Ee{B}_{\ordinal 0}$ as the type of points, $\Ee{B}_{\ordinal 1}$ as the type of lines (between two points), and of $\Ee{B}_{\ordinal 2}$ as the type of triangles (with its faces). The ``boundary'' of a triangle, as represented by $M_{\ordinal 2}$, consists of three points with three lines, and so on.
In general, we think of $\Ee{B}_{\ordinal n}$ as (the type of) $n$-dimensional tetrahedra, while $M^{\Ee{B}}_{\ordinal n}$ are their ``complete boundaries''.
In principle, we could have defined $\Ee{B}$ in a way such that \cref{fig:e-m-ideal} are judgmental equalities rather than only equivalences: the stated types could be completed to form a Reedy fibrant diagram. However, we do not think that this is possible using a definition that is as uniform and short as the one above.
Already for $\Ee{B}_{\ordinal 3}$, it seems unclear what the best formulation would be if we wanted to follow the presentation of \cref{fig:e-m-ideal}.
In general, such a construction would most likely make use of Street's orientals~\cite{Street1987283}.

For any $\ordinal n$, the co-slice category $\ordinal n \slash \deltop$ is a poset. This is a consequence of the fact that all morphisms in $\deltplus$ are monic.
We have the forgetful functor ${U : \ordinal n \slash \deltop \to \deltop}$.
Further, $\ordinal n \slash \deltop$ is isomorphic to the poset $\mathcal P_+(\ordinal n)$ of nonempty subsets of the set $\ordinal n \jdeq \{0,1,\ldots,n\}$, where we have an arrow between two subsets if the first is a superset of the second.
The downwards closed full subcategories of $\ordinal n \slash \deltop$ correspond to downwards closed subsets of $\mathcal P_+(\ordinal n)$.
If $S$ is such a downwards closed subset, we write $\lim_{S}(\Ee{B} \circ U)$, omitting the implied functor $S \to \ordinal n \slash \deltop$.

Any set $s \subseteq \ordinal n$ generates such a downwards closed set for which we write $\overline s \defeq \mathcal P_+(s)$. % this we already said!
For $k \in s$, we write $\overline s_{-k}$ for the set that we get if we remove exactly two sets from $\overline s$, namely $s$ itself and the set $s - k$ (i.e.\ $s$ without the element $k$).
We call $\lim_{\overline {\ordinal n}_{-k}}(\Ee{B} \circ U)$ the \emph{$k$-th $n$-horn}.

\begin{mainlemma} \label{mainlem:horn-filler-trivial}
 For any $n\geq 1$ and $k \in \ordinal n$, call the fibration from the full $n$-dimensional tetrahedron to the $k$-th $n$-horn
 \begin{equation}
  \lim_{\overline {\ordinal n}}(\Ee{B} \circ U) \; \fib \; \lim_{\overline {\ordinal n}_{-k}}(\Ee{B} \circ U)
 \end{equation}
 a \emph{horn-filler fibration}.
 All horn-filler fibrations are homotopy equivalences.
\end{mainlemma}
\begin{remcor}[Types are Kan complexes]
 As both Steve Awodey and an anonymous reviewer of have pointed out to me, \cref{mainlem:horn-filler-trivial} can be seen as a simplicial variant of Lumsdaine's~\cite{lumsdaine_weakOmegaCatsFromITT} and van den Berg-Garner's~\cite{bg:type-wkom} result that types are weak $\omega$-groupoids. 
 Both of these (independent) articles use Batanin's~\cite{batanin1998monoidal} definition, slightly modified by Leinster~\cite{leinster2002survey}, of a weak $\omega$-groupoid.
 
 Let us make the construction of a simplicial weak $\omega$-groupoid, i.e.\ of a Kan complex, concrete. 
 We can do this for the assumed type-theoretic fibration category $\ttfc$ as long as it is locally small (i.e.\ all hom-sets are sets).
 As before, we can without loss of generality
 assume that the type we want to consider lives in the empty context, i.e.\ is given by an object $B$. 
 We can define a semi-simplicial set
 \begin{align}
  & S : \deltop \to \mathsf{Set} \\
  & S_{\ordinal n} \defeq \ttfc(\unit, \Ee{B}_{\ordinal n}). 
 \end{align}
 For a morphism $f$ of $\deltop$, the functor $S$ is given by simply composing with $\Ee{B}(f)$.
 
 Shulman's \emph{acyclic fibration lemma}~\cite[Lemma 3.11]{shulman_inversediagrams}, applied on the result of our 
 \cref{mainlem:horn-filler-trivial}, 
 gives us sections of all horn-filler fibrations.
 Therefore, $S$ satisfies the Kan condition.
 By a result Rourke and Sanderson~\cite{rourke1971delta} (see also McClure~\cite{mcclure2013semisimplicial} for a combinatorical proof), such a semi-simplicial set can be given the structure of a Kan \emph{simplicial} set, an incarnation of a weak $\omega$-groupoid.

 To get the result that types in HoTT are Kan complexes, we simply take $\ttfc$ to be the syntactic category of HoTT, where we have to assume strict $\eta$ for $\Pi$, $\Sigma$ and $\unit$.
 This allows us to say very concretely that the terms of the types that we can write down form a Kan complex.
\end{remcor}
\begin{proof}[Proof of \cref{mainlem:horn-filler-trivial}]
 Fix $\ordinal n$.
 We show more generally that, for any $s \subseteq \ordinal n$ with cardinality $|s| \geq 2$ and $k \in s$, the fibration
  \begin{equation}
  \lim_{\overline s}(\Ee{B} \circ U) \fib \lim_{\overline s_{-k}}(\Ee{B} \circ U)
 \end{equation}
 is an equivalence. 
 Note that $\lim_{\overline s}(\Ee{B} \circ U)$ is isomorphic to $\Ee{B}_{\ordinal {|s|-1}}$.
 
 The proof is performed by induction on the cardinality of $s$. 
 If $s$ has only one element apart from $k$, 
 then $\overline s_{-k}$ is the one-object category $\{\{k\}\}$
 and we have 
 \begin{equation}
  \lim_{\{\{k\}\}}(\Ee{B} \circ U) \cong \Ee{B}_{\ordinal 0}.
 \end{equation}
 The statement then follows from \cref{lem:Ef-is-equiv}.
 
 Let us explain the induction step.
 The inclusions $\{\{k\}\} \subseteq \overline s_{-k} \subset \overline s$ give rise to a triangle 
\begin{center}
\begin{tikzpicture}[align=left, node distance=3.5cm]

  \node [color=black](A1) {$\lim_{\overline s}(\Ee{B} \circ U)$}; 
  \node [right of=A1](A2) {$\lim_{\overline s_{-k}}(\Ee{B} \circ U)$}; 
  \node [below of=A2, node distance=1.5cm](A3) {$\lim_{\{\{k\}\}}(\Ee{B} \circ U)$}; 
  
  \draw[->>] (A1) to node {} (A2);
  \draw[->>] (A1) to node {} (A3);  
  \draw[->>] (A2) to node {} (A3);
\end{tikzpicture}
\end{center}
 of fibrations. The top horizontal fibration is the one of which we want to prove that it is an equivalence. 
 Using ``2-out-of-3'' and the fact that the left (diagonal) fibration is an equivalence by \cref{lem:Ef-is-equiv}, it is sufficient to show that the right vertical fibration is an equivalence.
 To do this, we decompose it into $2^{|s|-1}-1$ fibrations, each of which can be viewed as the pullback of a smaller horn-filler fibration:
 
 Consider the set $\mathcal P_+(s-k)$ of those nonempty subsets of $s$ that do not contain $k$. The number of those is $2^{|s|-1} - 1$.
 We label those sets as $\alpha_1, \alpha_2, \ldots, \alpha_{2^{|s|-1} - 1}$, where the order is arbitrary with the only condition that their cardinality is nondecreasing, i.e.\ $i < j$ implies $|\alpha_i| < |\alpha_j|$.
 
 We further define $2^{|s|-1}$ subsets of $\mathcal P_+(s)$, named $S_0, S_1, \ldots, S_{2^{|s|-1}}$.
 Define $S_0$ to be $\{\{k\}\}$.
 Then, define $S_{i}$ to be $S_{i-1}$ with two additional elements, namely $\alpha_i$ and $\alpha_i \cup \{k\}$.
 In this process, every element of $\mathcal P_+(s)$ is clearly added exactly once. In particular, $S_{2^{|s|-1}} \jdeq \overline s$ and $S_{2^{|s|-1}-1} \jdeq \overline s_{-k}$.
 Further, all $S_i$ are downwards closed, which is easily seen to be the case by induction on $i$: it is the case for $i \jdeq 0$, and in general, $S_i$ contains all proper subsets of $\alpha_i \cup \{k\}$ due to the single ordering condition that we have put on the sequence $(\alpha_j)$.
 
 It is easy to see that
 \begin{align}
  & S_i \; \jdeq \; S_{i-1} \cup \overline{\alpha_i \cup \{k\}} \\
  & \overline{\alpha_i \cup \{k\}}_{-k}  \; \jdeq \;   S_{i-1} \cap \overline{\alpha_i \cup \{k\}}.
 \end{align}
 By \cref{lem:continuous-functor}, we thus have a pullback square

 \begin{center}
\begin{tikzpicture}[align=left, node distance=1.5cm]
  \node [color=black](A1) {$\lim_{S_i}(\Ee{B} \circ U)$}; 
  \node [right of=A1, node distance =3.5cm](A2) {$\lim_{\overline{\alpha_i \cup \{k\}}}(\Ee{B} \circ U)$}; 
  \node [below of=A1](A3) {$\lim_{S_{i-1}}(\Ee{B} \circ U)$}; 
  \node [below of=A2](A4) {$\lim_{\overline{\alpha_i \cup \{k\}}_{-k}}(\Ee{B} \circ U)$}; 
  
  \draw[->>] (A1) to node {} (A2);
  \draw[->>] (A1) to node {} (A3);
  \draw[->>] (A2) to node {} (A4);
  \draw[->>] (A3) to node {} (A4);
\end{tikzpicture}
 \end{center}

 \noindent
For $i \leq 2^{|s|-1} - 2$, the right vertical morphism is a homotopy equivalence by the induction hypothesis.
As acyclic fibrations are stable under pullback, the left vertical morphism is one as well.
As the composition of equivalences is an equivalence, we conclude that 
\begin{equation}
 \lim_{\overline s_{-k}}(\Ee{B} \circ U)
 \fib
 \lim_{\{\{k\}\}}(\Ee{B} \circ U)
\end{equation}
is indeed an equivalence.
\end{proof}

\begin{remark}
 Recall that a simplicial object $X : \oppo\Delta \to \mathcal D$ satisfies the \emph{Segal condition} (see~\cite{Segal:Segal}) if the ``fibration''
 \begin{equation} \label{eq:inf:segal}
  X_{\ordinal n} \to \underbrace{X_{\ordinal 1}\times_{X_{\ordinal 0}}X_{\ordinal 1}\times_{X_{\ordinal 0}} \ldots \times_{X_{\ordinal 0}}X_{\ordinal 1}}_{n \textit{ factors}}
 \end{equation}
 is an equivalence.
 In our situation, it looks as if it was easy to check the Segal condition; more precisely, a shorter argument than the one in the proof could show that \emph{all} the fibrations of the form~\eqref{eq:inf:segal}) are homotopy equivalences.
 Our construction with the sequence $\alpha_1, \alpha_2, \ldots, \alpha_{2^{|s|-1} - 1}$ seems to contain a ``manual'' proof of the fact that checking this form of the Segal condition would be sufficient.
\end{remark}

\section{Fibrant Diagrams of Natural Transformations} \label{sec:nat-trans}

Let us first formalise what we mean by the ``type of natural transformations between two diagrams''.
If $I$ is a tame category and $D,E : I \to \ttfc$ are Reedy fibrant diagrams, the exponential $E^D : I \to \ttfc$ in the functor category $\ttfc^I$ exists and is Reedy fibrant~\cite[Theorem 11.11]{shulman_inversediagrams} and thus has a limit in $\ttfc$.
What we are interested in is the more general case that $D$ might not be fibrant, but we also do not need any exponential.\footnote{The author expects that the exponential $E^D$ exists and is fibrant even if only $E$ is fibrant (note that $D$ is automatically at least \emph{pointwise} fibrant, as all objects in $\ttfc$ are fibrant by definition). This would lead to an alternative representation of the same construction, but the author has  decided to use the less abstract one presented here as it seems to give a more direct argument.} 
On a more abstract level, what we want to do can be described as follows. For any downwards closed subcategory of $I$, we consider the exponential of $D$ and $E$ restricted to this subcategory, and take its limit. 
We basically construct approximations to the ``type of natural transformations'' from $D$ to $E$ which, in fact, corresponds to the limit of these approximations, should it exist.
Fortunately, it is easy to do everything ``by hand'' on a very basic level.

We write $\truncCat I$ for the underlying partially ordered set of $I$ that we get if we make any two parallel arrows equal (we ``truncated'' all hom-sets).
This makes sense even if $I$ is not inverse, but if it is, then so is $\truncCat I$.
There is a canonical functor ${\bproj -}_I : I \to \truncCat I$.
As the objects of $I$ are the same as those of $\truncCat I$, we omit this functor when applied to an object, i.e.\ for $i \in I$ we write $i \in \truncCat I$ instead of $\bproj i_I \in \truncCat I$.

\begin{definition}[Diagram of Natural Transformations]
Given an inverse category $I$, a diagram $D : I \to \ttfc$ and a fibrant diagram $E : I \to \ttfc$ with 
\begin{equation}
 E_i \jdeq \sm{m : M^E_i} F^E_{(i,m)} 
\end{equation}
as introduced in \cref{notation:match-and-fibre}, we define a fibrant diagram $N : \truncCat I \to \ttfc$ together with a natural transformation
\begin{equation}
 v : \big((N \circ {\bproj -}_I) \times D\big) \to E
\end{equation}
simultaneously, where $(N \circ {\bproj -}_I) \times D$ is the functor $I \to \ttfc$ that is given by taking the product pointwise.

Assume $i$ is an object in $I$. Assume further that we have defined both $N$ and $v$ for all predecessors of $i$ (i.e.\ $N$ is defined on $\Set{x \in \truncCat I | x \prec i}$ and $v$ is defined on $\Set{x \in I | x \prec i}$).
$v$ then gives rise to a map 
\begin{equation} \label{eq:overline-v}
 \overline v : \lim_{\Set{x \in \truncCat I | x \prec i}}N \times D_i \to M^E_i.
\end{equation}

Using \cref{lem:change-notation-poset}, we have $\lim_{\Set{x \in \truncCat I | x \prec i}}N \; \cong \; \lim_{i \sslash \truncCat I}(N \circ U) \; \cong \; M^N_i$.
We define $N_i \jdeq \sm{m : M^N_i} F^N_{(i,m)}$ by choosing the fibre over $m$ to be 
\begin{equation} \label{eq:nat-trans-total-def}
 F^N_{(i,m)} \defeq \prd{d : D_i} F^E(i, \overline{v}(m,d)).
\end{equation}
This definition also gives a canonical morphism $v_i : N_i \times D_i \to E_i$ which extends $v$.
\end{definition}

Let us apply this construction to define the type of constant functions between types $A$ and $B$ in the way that we already suggested in \cref{fig:const-as-nat-trans} on page \pageref{fig:const-as-nat-trans}.
First, we define the \emph{$\ordinal 0$-coskeleton} of the diagram that is constantly $A$, which we also have referred to as the \emph{trivial diagram} over $A$, 
as the functor  
$\Aa{A}  : \deltop \to \ttfc$ as follows. 
For objects, it is simply given by 
\begin{equation}
 \Aa{A}_{\ordinal k} \defeq \underbrace{A \times A \times \ldots \times A}_{(k+1) \textit{ factors}}. 
\end{equation}
If we view an element of $\Aa{A}_{\ordinal i}$ as a function $\ordinal i \to A$, for a map $f : \incrmap i j$ we get $\Aa{A} (f) : \Aa{A}_{\ordinal j} \to \Aa{A}_{\ordinal i}$ by composition with $f$.
We then define the functor $\Nn_{A,B} : \truncCat\deltop \to \ttfc$ via the above construction as the ``fibrant diagram of natural transformations'' from $\Aa{A} $ to $\Ee{B}$. 
Note that $\truncCat \deltop$ is isomorphic to $\oppo{\omega}$. 
Using the homotopy equivalent formulation of $\Ee{B}$ stated in \eqref{fig:e-m-ideal} and the definitions of $\const$ and $\cohcond$ of \cref{sec:motivation}, we get 
\begin{equation}
 \Nn_{A,B}(\ordinal 0) \eqvsym (A \to B)
\end{equation}
as well as 
\begin{equation}
 \Nn_{A,B}(\ordinal 1) \eqvsym  \sm{f : A \to B} \const_f
\end{equation}
and 
\begin{equation}
 \Nn_{A,B}(\ordinal 2) \eqvsym  \sm{f : A \to B} \sm{c : \const_f} \cohcond_{f,c}.
\end{equation}
We want to stress the intuition that we think of functions with an infinite tower of coherence condition by introducing the following notation:

\begin{definition}[$A \toomega B$] \label{not:toomega}
 Given types $A$ and $B$, we write $A \toomega B$ synonymously for $\lim_{\truncCat\deltop}\Nn_{A,B}$.
\end{definition}
% In the same way as we write simply $\Ee{B}$ (and not $\Ee_B$), 
We usually omit the indices of $\Nn_{A,B}$ and just write $\Nn$, provided that $A,B$ are clear from the context.
This allows us write $\Nn_{\ordinal n}$ instead of $\Nn_{A,B}(\ordinal n)$.

Analogously to \cref{not:toomega}, let us write the following:
\begin{definition}[$A \ton n B$] \label{not:ton}
 Given types $A$ and $B$ and a (usually finite) number $n$, we write $A \ton n B$ synonymously for $\Nn_{\ordinal n}$. 
 To enable a uniform presentation, we define $A \ton {-1} B$ to be the unit type.
\end{definition}

We are now able to make the main goal, as outlined in \cref{subsec:outline-of-inf}, precise:
we will construct a function $(\proptrunc A \to B) \to (A \toomega B)$ and prove that it is a homotopy equivalence.
For now, let us record that we can get a function $B \to (A \toomega B)$. 
In the following definition, we use the cones $\eta : B \to \Ee{B}$ and $\tilde\eta : B \to M^{\Ee{B}}$ from \cref{sec:equality}. 
\begin{definition}[Canonical function $s : B \to (A \toomega B)$]\label{def:canonical-function}
 Define a cone $\gamma : B \to \Nn$ which maps $b:B$ to the function that is ``judgmentally constantly $b$'', in the following way.
 First, notice that the matching object $M^{\Nn}_{\ordinal n}$ is simply $\Nn_{\ordinal {n-1}}$ (due to the fact that $\truncCat \deltop$ is a total order).
 Assume we have already defined the component $\gamma_{\ordinal {n-1}} : B \to \Nn_{\ordinal {n-1}}$ such that $\overline v(\gamma_{\ordinal {n-1}}(b),x) \jdeq \tilde\eta_{\ordinal n}(b)$, with $\overline v$ as in \eqref{eq:overline-v}, for all $x : \Aa{A}_{\ordinal n}$. We can then define $\gamma_{\ordinal n}(b)$ by giving an element of $F^{\Nn}(\ordinal n,\gamma_{\ordinal {n-1}}(b))$, but that expression evaluates to $\prd{x:\Aa{A}_{\ordinal n}}\sm{x:B}\id{\tilde\eta_{\ordinal n}(x)}{\tilde\eta_{\ordinal n}(b)}$. Thus, we can take $\gamma_{\ordinal n}(b)$ to be
 \begin{equation}
  \gamma_{\ordinal n}(b) \defeq \big(\gamma_{\ordinal {n-1}}(b) , \lam z (b , \refl {\tilde\eta_{\ordinal {n-1}}(b)})\big).
 \end{equation}
 It is straightforward to check that the condition $\overline v(\gamma_{\ordinal n},x) \jdeq \tilde\eta_{\ordinal {n+1}}(b)$ is preserved.
 Define the function 
  $s : B \to (A \toomega B)$
 to be $\lim_{\truncCat\deltop}\gamma$, the arrow that is induced by the universal property of the limit.
\end{definition}

\section{Extending Semi-Simplicial Types} \label{sec:extending-sst}

In this section, we first define the category $\ccat$. 
We can then view $\opccat$ as an extension of $\deltop$, as $\deltop$ can be embedded into $\opccat$, and this embedding has a retraction $R$ with the property that the co-slice $c \slash \opccat$ is always isomorphic to $R_c \slash \deltop$.
With the help of this category, we can describe precisely how we want to apply our ``expanding and contracting'' strategy.
The definition of $\ccat$ is motivated by the proofs of \cref{satz:special-case-0,satz:special-case-1}, and this will become clear when we show how exactly we use $\ccat$, see especially \cref{fig:N}.

\begin{definition}[$\ccat$]
 Let $\ccat$ be the following category. 
 For every object $\ordinal k$ of $\deltplus$ (i.e.\ every natural number $k$),
 and every number $i \in \ordinal {k+1}$, we have an object $\cob k i$.
 Given objects $\cob k i$ and $\cob m j$, we define $\ccat \left( \cob k i , \cob m j \right)$ to a subset of the set of maps $\deltplus(\ordinal k , \ordinal m)$. 
 It is given by 
 \begin{equation}
  \ccat \left( \cob k i , \cob m j \right) \defeq \Set{f :  \incrmap k m | \alpha(k,m,i,j) }
 \end{equation}
 where the condition $\alpha$ is defined as
 \begin{equation} \label{eq:inf:alpha}
   \alpha(k,m,i,j) \defeq
    \begin{cases}
      f(x) \jdeq x \mbox{ for all } x < i, \mbox{ and } f(x) > x \mbox{ for all } x \geq i & \mbox{if } i < j \\
      f(x) \jdeq x \mbox{ for all } x < i & \mbox{if } i \jdeq j \\
      \bot & \mbox{if } i > j.
    \end{cases}
 \end{equation}
\end{definition}
What will be useful for us is the opposite category $\opccat$.
A part of it, namely the subcategory $\Set{\cob k i \in \opccat | k \leq 3}$, can be pictured as shown in \cref{fig:opccat}. 
We only draw the ``generating'' arrows $\cob {m+1} j \to \cob m i$.
\begin{figure}[ht]
\begin{center}
\begin{tikzpicture}[node distance=\textwidth/8]
  \node (xA0) {$\cob 0 0$}; 
  \node [right of=xA0] (xA1) {$\cob 0 1$};
  
  \node [above of=xA0] (xB0) {$\cob 1 0$}; 
  \node [right of=xB0] (xB1) {$\cob 1 1$};
  \node [right of=xB1] (xB2) {$\cob 1 2$};
  \node [right of=xB2] (xB3) {}; % a phantom node for typesetting
  
  \node [above of=xB0] (xC0) {$\cob 2 0$}; 
  \node [right of=xC0] (xC1) {$\cob 2 1$};
  \node [right of=xC1] (xC2) {$\cob 2 2$};
  \node [right of=xC2] (xC3) {$\cob 2 3$};
  
  \node [above of=xC0] (xD0) {$\cob 3 0$}; 
  \node [right of=xD0] (xD1) {$\cob 3 1$};
  \node [right of=xD1] (xD2) {$\cob 3 2$};
  \node [right of=xD2] (xD3) {$\cob 3 3$};
  \node [right of=xD3] (xD4) {$\cob 3 4$};
  
  \node [above of=xD0, node distance = 0.7cm] (xE0) {\ldots}; 
  \node [above of=xD1, node distance = 0.7cm] (xE1) {\ldots}; 
  \node [above of=xD2, node distance = 0.7cm] (xE2) {\ldots}; 
  \node [above of=xD3, node distance = 0.7cm] (xE3) {\ldots}; 
  \node [above of=xD4, node distance = 0.7cm] (xE4) {\ldots}; 
  
  \draw[->, transform canvas={xshift=-0.25ex}] (xB0) to node {} (xA0);
  \draw[->, transform canvas={xshift=0.25ex}] (xB0) to node {} (xA0);
  \draw[->] (xB1) to node {} (xA0);
  \draw[->] (xB1) to node {} (xA1);
  \draw[->] (xB2) to node {} (xA0); 
  \draw[->] (xB2) to node {} (xA1);

  \draw[->, transform canvas={xshift=-0.5ex}] (xC0) to node {} (xB0);
  \draw[->, transform canvas={xshift=0ex}] (xC0) to node {} (xB0);
  \draw[->, transform canvas={xshift=0.5ex}] (xC0) to node {} (xB0);
  \draw[->] (xC1) to node {} (xB0);
  \draw[->, transform canvas={xshift=-0.25ex}] (xC1) to node {} (xB1);
  \draw[->, transform canvas={xshift=0.25ex}] (xC1) to node {} (xB1);
  \draw[->] (xC2) to node {} (xB0); 
  \draw[->] (xC2) to node {} (xB1);
  \draw[->] (xC2) to node {} (xB2);
  \draw[->] (xC3) to node {} (xB0); 
  \draw[->] (xC3) to node {} (xB1); 
  \draw[->] (xC3) to node {} (xB2);

  \draw[->, transform canvas={xshift=-0.75ex}] (xD0) to node {} (xC0);
  \draw[->, transform canvas={xshift=-0.25ex}] (xD0) to node {} (xC0);
  \draw[->, transform canvas={xshift=0.25ex}] (xD0) to node {} (xC0);
  \draw[->, transform canvas={xshift=0.75ex}] (xD0) to node {} (xC0);
  \draw[->] (xD1) to node {} (xC0);
  \draw[->, transform canvas={xshift=-0.5ex}] (xD1) to node {} (xC1);
  \draw[->, transform canvas={xshift=0ex}] (xD1) to node {} (xC1);
  \draw[->, transform canvas={xshift=0.5ex}] (xD1) to node {} (xC1);
  \draw[->] (xD2) to node {} (xC0); 
  \draw[->] (xD2) to node {} (xC1);
  \draw[->, transform canvas={xshift=-0.25ex}] (xD2) to node {} (xC2);
  \draw[->, transform canvas={xshift=0.25ex}] (xD2) to node {} (xC2);
  \draw[->] (xD3) to node {} (xC0); 
  \draw[->] (xD3) to node {} (xC1); 
  \draw[->] (xD3) to node {} (xC2);
  \draw[->] (xD3) to node {} (xC3);
  \draw[->] (xD4) to node {} (xC0); 
  \draw[->] (xD4) to node {} (xC1); 
  \draw[->] (xD4) to node {} (xC2);
  \draw[->] (xD4) to node {} (xC3);
\end{tikzpicture}
\caption{The category $\opccat$}\label{fig:opccat}
\end{center} 
\end{figure}
The idea is that the full subcategory of objects $\cob m 0$ is exactly $\deltplus$, 
and that every object $\cob m i$ in $\ccat$ receives exactly one arrow for every $\incrmap k m$. 
We make this precise as follows:
\begin{lemma}\label{lem:coslices-equivalent}
 The canonical embedding $\deltop \hookrightarrow \opccat$, defined by $\ordinal m \mapsto \cob m 0$, has a retraction
 \begin{align}
  &R : \opccat \to \deltop \\
  &R(\cob m j) \defeq \ordinal m
 \end{align}
 and, for all objects $\cob m j$ in $\opccat$, the functor that $R$ induces on the co-slice categories
 \begin{equation}
  \cob m j \slash \opccat \; \to \; \ordinal m \slash \deltop
 \end{equation}
 is an isomorphism of categories. 
\end{lemma}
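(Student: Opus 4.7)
The plan is to first define $R$ on morphisms, verify functoriality and the retraction property by unwinding definitions, and then establish the co-slice isomorphism, which is where the real content lies. The key technical point is the following uniqueness statement: for a strictly increasing $f : \ordinal k \to \ordinal m$ and a fixed target index $j \in \ordinal{m+1}$, there is a \emph{unique} source index $i \in \ordinal{k+1}$ such that $f$ defines a morphism $\cob k i \to \cob m j$ in $\ccat$.

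On morphisms, I would let $R$ forget the upper indices: by definition, a morphism $\cob m j \to \cob k i$ in $\opccat$ is a strictly increasing $f : \ordinal k \to \ordinal m$ satisfying $\alpha(k,m,i,j)$, and $R$ sends it to the same $f$ viewed as an arrow $\ordinal m \to \ordinal k$ of $\deltop$. Functoriality is inherited from $\deltplus$, since identities and composites in $\ccat$ are computed as in $\deltplus$ (that $\alpha$ is preserved under composition and holds for identities is implicit in the well-definedness of $\ccat$). The retraction property is then immediate: the embedding sends $\ordinal m \mapsto \cob m 0$ and decorates every $f$ with indices $0,0$, while $R$ strips those indices.

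For the co-slice statement, I would introduce, for each strictly increasing $f : \ordinal k \to \ordinal m$, its \emph{kink index} $\kappa(f) \in \ordinal{k+1}$, defined as the smallest $x \in \ordinal k$ with $f(x) > x$, or $k+1$ if no such $x$ exists. Strict monotonicity forces $f(x) = x$ for all $x < \kappa(f)$ and $f(x) > x$ for all $x \geq \kappa(f)$, so a direct inspection of the three cases in~\eqref{eq:inf:alpha} shows that the unique $i$ with $\alpha(k,m,i,j)$ satisfied is $i = \min(\kappa(f),j)$: the case $i < j$ forces $i = \kappa(f)$ and applies exactly when $\kappa(f) < j$, the case $i = j$ applies when $\kappa(f) \geq j$, and $i > j$ is impossible. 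This gives the bijection on objects of the two co-slice categories.

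To extend this to morphisms, suppose $g : \ordinal{k_2} \to \ordinal{k_1}$ in $\deltop$ is an arrow between objects $f_1$ and $f_2 = f_1 \circ g$ of $\ordinal m \slash \deltop$, carrying decorations $i_1 = \min(\kappa(f_1),j)$ and $i_2 = \min(\kappa(f_2),j)$; I have to check that $g$ automatically satisfies $\alpha(k_2,k_1,i_2,i_1)$, i.e.\ that $i_2 = \min(\kappa(g),i_1)$. This reduces to the composition identity
\begin{equation*}
\kappa(f_1 \circ g) = \min(\kappa(g),\kappa(f_1)),
\end{equation*}
which follows from a short case split on $x$: if $x < \kappa(g)$ then $g(x) = x$ and so $f_2(x) = f_1(x)$, while if $x \geq \kappa(g)$ then $g(x) > x$ and monotonicity of $f_1$ gives $f_1(g(x)) \geq g(x) > x$. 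Combined with associativity of $\min$, this yields $\min(\kappa(g),i_1) = \min(\kappa(g),\kappa(f_1),j) = \min(\kappa(f_2),j) = i_2$, establishing the bijection on hom-sets. The main obstacle is thus just the kink-index composition identity; everything else is bookkeeping.
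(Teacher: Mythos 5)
Your proposal is correct and follows the same route as the paper: the paper's proof also reduces the co-slice isomorphism to the observation that for each $f : \incrmap k m$ and each $j$ there is exactly one $i$ with $\alpha(k,m,i,j)$, which is precisely your $i = \min(\kappa(f),j)$. Your write-up merely fills in the details the paper leaves implicit, in particular the verification on morphisms via the identity $\kappa(f_1 \circ g) = \min(\kappa(g),\kappa(f_1))$, which is a correct and welcome elaboration.
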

\begin{proof}
 It is clear that $\deltop \hookrightarrow \opccat \xrightarrow R \deltop$ is the identity on $\deltop$.
 For any $\cob m j$, fix an object $\ordinal k$ in $\deltop$ and take a morphism $f : \incrmap k m$.
 There is exactly one $i$ such that the condition $\alpha(k,m,i,j)$ in \eqref{eq:inf:alpha} is fulfilled.
 This proves the second claim.
\end{proof}

Let us extend the functor $\Aa{A} : \deltop \to \ttfc$ (see \cref{sec:nat-trans}) to the whole category $\opccat$. 
Assume that a type $A$ is given.
We want to define a diagram $\AA{A}$ that extends $\Aa{A}$.
This corresponds to the point where, in \cref{sec:motivation}, we had assumed that a point $\apoint : A$ was given, in other words, we had added $(\apoint : A)$ to the context.
We do the same here.
Recall that we write $\fibslice \ttfc A$ for the type-theoretic fibration category with fixed context $A$, as explained in \cref{sec:ttfcs}.
The diagram that we define is a functor
\begin{equation}
 \AA{A} : \opccat \to \fibslice \ttfc A.
\end{equation}
In order to be closer to the type-theoretic notation and to hopefully increase readability,
we write objects of $\fibslice \ttfc A$ simply as 
$B(\apoint)$ if they are of the form $\sm{a:A}B(a) \fib A$.
This uses that we can do the whole construction fibrewise, i.e.\ that we can indeed assume a fixed but arbitrary $\apoint : A$ ``in the context''.
Of course, objects in $\fibslice \ttfc A$ of the form $A \times B \fib A$ are simply denoted by $B$.

Using this notation, we define $\AA{A}$ on objects by
\begin{equation}
 \AA{A}(\cob m j) \defeq \underbrace{A \times A \times \ldots \times A}_{(m+1-j) \text{ factors}},
\end{equation}
for which we simply write $A^{m+1-j}$.
Given $\cob m j \xrightarrow{f} \cob k i$ in $\opccat$, we thus need to define a map
$\AA{A} (f) : A^{m+1-j} \to A^{k+1-i}$.
As in the definition of $\Aa{A} $, the map $f : \incrmap {k}{m}$ gives rise to a function
$\overline f : A^{m+1} \to A^{k+1}$ by ``composition''.
We define $\AA{A} (f)$ as the composite

\begin{center}
\begin{tikzpicture}[node distance=1.4cm]
  \node (i2j2) {$A^{m+1-j}$}; 
  \node [below of=i2j2] (j2i2j2) {$A^{j} \times A^{m+1-j}$};
  \node [below of=j2i2j2] (j1i1j1) {$A^{i} \times A^{k+1-i}$};
  \node [below of=j1i1j1] (i1j1) {$A^{k + 1 - i}$};
  \draw[biggerArrow] (i2j2) to node [right] {$\vec a \mapsto (\underbrace{\apoint, \apoint, \ldots, \apoint}_{j \text{ times } \apoint}, \vec a)$} (j2i2j2);
  \draw[->>] (j2i2j2) to node [right] {$\overline f$} (j1i1j1);
  \draw[->>] (j1i1j1) to node [right] {$\snd$} (i1j1);
\end{tikzpicture}
\end{center}

We have a diagram $\Ee{B} \circ R : \opccat \to \ttfc$, which we can (pointwise) pull back along $A \fib \unit$, which gives us a diagram that we call $\EE{B} : \opccat \to \fibslice \ttfc A$.
This diagram is Reedy fibrant. 
With the construction of \cref{sec:nat-trans}, we can define $\NN : \truncCat \opccat \to \fibslice \ttfc A$ to be the ``fibrant diagram of natural transformations'' from $\AA{A} $ to $\EE{B}$.

We can picture $\NN$ on the subcategory $\Set{\cob m j \in \truncCat \opccat | m \leq 2}$ as shown in \cref{fig:N}.
For readability, we use the homotopy equivalent representation of the values of $\Ee{B}$ as shown in \cref{fig:e-m-ideal}.
Further, we only write down the values of $F^{\EE{B}}$ (i.e.\ the fibres) instead of the full expression $\EE{B}(\cob m j) \jdeq \sm{t : M^{\EE{B}}(\cob m j)} F^{\EE{B}}(\cob m j, t)$. 
This means that, for example, $\const_f \fib (f : A \to B)$ stands for the projection $\sm { f : A \to B} \const_f \fib (A \to B)$. 
The reader is invited to make a comparison with \cref{satz:special-case-1}. Recall that, in the proof of \cref{satz:special-case-1}, we have started with the $\Sigma$-component $f_1$. In the ``expanding'' part, we have added the pair of $f$ and $c_1$, which (together) form a contractible type, as well as the pair of $c$ and $d_1$, and $c_2$ and $d_3$. We have also used that the types of $d$ and $d_2$ are, in the presence of the other $\Sigma$-components, contractible. Then, in the ``retracting'' part, we have used that the types of $d_3$ and $d_1$ are contractible, and that $c_1$ and $d_2$, as well as $f_1$ and $c_2$, form pairs of two other contractible types.

\begin{figure}[ht]
\begin{center}
\begin{tikzpicture}[node distance=\textwidth/4]
  \node (A0) {$f : A \to B$}; 
  \node [right of=A1] (A1) {$f_1 : B$};
  
  \node [above of=A0, node distance=2cm] (B0) {$c : \const_f$}; 
  \node [right of=B0] (B1) {$c_1 : \prd{a:A} \id{f(a)}{f_1}$};
  \node [right of=B1] (B2) {$c_2 : \id{f(\apoint)}{f_1}$};
  \node [right of=B2] (B3) {}; % a phantom node for typesetting
  
  \node [above of=B0, node distance=3cm] (C0) {$d : \cohcond_{f,c}$}; 
  \node [above of=B1, node distance=3.5cm] (C1) {$d_1 : \prd{a^1 a^2:A} \id{c(a^1,a^2) \cdot c_1(a^2)}{c_1(a^1)}$};
  \node [above of=B2, node distance=3cm] (C2) {$d_2 : \prd{a:A} c(\apoint,a) \cdot c_1(a) = c_2$};
  \node [above of=B3, node distance=2.5cm] (C3) {$d_3 : \id{c(\apoint,\apoint) \cdot c_1(\apoint)}{c_2}$};
  
  \draw[->>] (B0) to node {} (A0);
  \draw[->>] (B1) to node {} (A0);
  \draw[->>] (B1) to node {} (A1);
  \draw[->>] (B2) to node {} (A0); 
  \draw[->>] (B2) to node {} (A1);

  \draw[->>] (C0) to node {} (B0);
  \draw[->>] (C1) to node {} (B0);
  \draw[->>] (C1) to node {} (B1);
  \draw[->>] (C2) to node {} (B0); 
  \draw[->>] (C2) to node {} (B1);
  \draw[->>] (C2) to node {} (B2);
  \draw[->>] (C3) to node {} (B0); 
  \draw[->>] (C3) to node {} (B1); 
  \draw[->>] (C3) to node {} (B2);
\end{tikzpicture}
\caption{The diagram $\NN$ in readable (homotopy equivalent) representation; only the three lowest levels (the images of $\cob m j$ with $m \leq 2$) are drawn. 
Note that we use that same identifiers as in the proofs of \cref{satz:special-case-0,satz:special-case-1}.} \label{fig:N}
\end{center}
\end{figure}

To compare $\NN$ with $\Nn$, first note that $\Nn : \deltop \to \ttfc$ can be pulled back along $A \fib \unit$ pointwise and yields a diagram $\deltop \to \fibslice \ttfc A$.
This diagram is identical (pointwise isomorphic) to the diagram that we get if we first pull back the diagrams $\Aa{A}$ and $\Ee{B}$, and then take the ``fibrant diagram of natural transformations''.
Further, as ``limits commute with limits'', the limit of this diagram is, in $\fibslice \ttfc A$, isomorphic to the pullback of $A \toomega B$ along $A \fib \unit$.
It is thus irrelevant at which point in the construction we ``add $(\apoint : A)$ to the context'', i.e.\ at which point we switch from $\ttfc$ to the slice over $A$.
This allows us to compare constructions in $\fibslice \ttfc A$ and $\ttfc$, by implicitly pulling back the latter.
As it is easy to see, $\NN$ extends $\Nn$ in this sense (i.e.\ $\NN(\cob m 0)$ is the pullback of $\Nn_{\ordinal m}$ along $A \fib \unit$).

Recall that we have defined a cone $\gamma : B \to \Nn$ and an arrow $s : B \to (A \toomega B)$ in \cref{def:canonical-function}.
Exploiting that $\gamma_{\ordinal n}(b)$ was defined in a way that makes it completely independent of the ``argument'' $x : \Aa{A}_{\ordinal n}$, and using \cref{lem:coslices-equivalent}, we can extend $\gamma$ to a cone
 $\overline{\gamma}: B \to \NN$,
essentially by putting $\overline{\gamma}_{\cob m j} \defeq \gamma_{\ordinal m}$.
This gives a morphism 
\begin{equation} \label{eq:s-extended}
 \overline s : B \to \lim_{\truncCat\opccat}\NN
\end{equation}
which extends $s$, in the sense that (the pullback of) $s$ is the composition
\begin{equation} \label{eq:s-overline-s} 
 B \xrightarrow{\overline s} \lim_{\truncCat\opccat}\NN \xrightarrow{\mathsf{pr}} \lim_{\truncCat\deltop}\Nn,
\end{equation} 
with $\mathsf{pr}$ coming from the embedding $\truncCat\deltop \hookrightarrow \truncCat\opccat$ and the fact that the restriction of $\NN$ to $\{\cob m 0\}$ is $\Nn$ (pulled back along $A \fib \unit$; note that the codomain of $\mathsf{pr}$ is implicitly pulled back as well).
Further, observing that $\NN(\cob 0 1)$ is canonically equivalent to $B$ (as used in \cref{fig:N}), the composition
\begin{equation}\label{eq:overline-s-is-section}
 B \xrightarrow{\overline s} \lim_{\truncCat\opccat}\NN \xrightarrow{\mathsf{pr'}} \NN(\cob 0 1) \xrightarrow \sim B
\end{equation}
is the identity on $B$.

\section{The Main Theorem} \label{sec:maintheorem}

The preparations of the previous sections allow us prove our main result.
We proceed analogously to our arguments for the special cases in \cref{sec:motivation}:
\cref{lem:N-horn-contr} and \cref{cor:main} show that certain fibrations are homotopy equivalences, i.e.\ that certain types are contractible. 
This is then used in \cref{lem:main} to perform the ``expanding and contracting'' argument, which shows that, if we assume a point in $A$, the function $s$ from \cref{def:canonical-function} is a homotopy equivalence.
Admittedly, especially \cref{lem:N-horn-contr} requires extensive calculations.

We need to keep the extremely simple statement of \cref{lem:change-notation-poset} in mind: the limit of $\NN \circ U$ restricted to $z \sslash \truncCat {\opccat}$ is isomorphic to the limit of $\NN$ restricted to $\Set{y \in \truncCat{\opccat} | y \prec z}$.
We prefer the slightly more concise first notation.

For the following statement, note that $\NN(\cob m j)$ is the same as $\lim_{\cob m j \slash \truncCat \ccat} \NN \circ U$.
\begin{lemma}\label{lem:N-horn-contr}
 The fibration
 \begin{equation}\label{eq:fib-with-set-notation}
  \NN(\cob m j) \fib \lim_{\Set{x \in \truncCat \opccat | x \prec \cob m j, x \not\jdeq \cob{m-1}{j-1} }} \NN 
 \end{equation}
 is a homotopy equivalence for any $\ordinal m$ and $j$.
\end{lemma}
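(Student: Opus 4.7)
The plan is to factor the given fibration through the matching object and prove that the composite has contractible fibres by reducing the computation to \cref{mainlem:horn-filler-trivial}. Write $K \defeq \Set{x \in \truncCat{\opccat} | x \prec \cob m j, \, x \not\jdeq \cob{m-1}{j-1}}$. Note that $\cob{m-1}{j-1}$ is indeed a predecessor of $\cob m j$: the unique morphism $\cob m j \to \cob{m-1}{j-1}$ in $\opccat$ corresponds, under the retraction $R$ of \cref{lem:coslices-equivalent}, to the $(j-1)$-th face map $\ordinal{m-1} \to \ordinal m$ in $\deltplus$. Hence $\{x \prec \cob m j\} = K + \cob{m-1}{j-1}$ and $M^{\NN}_{\cob m j} \cong \lim_{K + \cob{m-1}{j-1}}\NN$ by \cref{lem:change-notation-poset}. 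Using \cref{lem:sub-gives-fibs}, the fibration from the statement then decomposes as the composite
\[
\NN(\cob m j) \;\fib\; M^{\NN}_{\cob m j} \;\fib\; \lim_K\NN,
\]
so it suffices to show the composite has contractible fibres.

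Fix $k \in \lim_K\NN$. Extending $k$ to $M^{\NN}_{\cob m j}$ means choosing a value at the missing predecessor $\cob{m-1}{j-1}$, which by \eqref{eq:nat-trans-total-def} amounts to an inhabitant of $\prd{d:A^{m+1-j}} F^{\EE{B}}(\cob{m-1}{j-1}, \overline v(k,d))$; lifting further to $\NN(\cob m j)$ adjoins a factor $\prd{d:A^{m+1-j}} F^{\EE{B}}(\cob m j, \cdots)$. The two products have the same index type because $\AA{A}(\cob{m-1}{j-1}) = A^{(m-1)+1-(j-1)} = A^{m+1-j} = \AA{A}(\cob m j)$. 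Applying the distributivity law \eqref{eq:distributivity} to swap the outer $\Sigma$ with the inner $\Pi$, the fibre becomes equivalent to
\[
\prd{d:A^{m+1-j}} \Bigl( \sm{c_d : F^{\EE{B}}(\cob{m-1}{j-1}, \overline v(k,d))} F^{\EE{B}}(\cob m j, \cdots) \Bigr).
\]

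For each fixed $d$, I claim the inner $\Sigma$ is precisely the fibre of a horn-filler fibration. The isomorphism $\cob m j \slash \opccat \cong \ordinal m \slash \deltop$ from \cref{lem:coslices-equivalent} identifies $M^{\EE{B}}(\cob m j)$ canonically with $M^{\Ee{B}}_{\ordinal m}$ and the $\cob{m-1}{j-1}$-component with the $(j-1)$-th face of $\ordinal m$; dropping it yields the $(j-1)$-th horn $\overline{\ordinal m}_{-(j-1)}$. Thus the inner $\Sigma$ coincides, up to canonical isomorphism, with the fibre over $\overline v(k,d)$ of
\[
\lim_{\overline{\ordinal m}}(\Ee{B} \circ U) \;\fib\; \lim_{\overline{\ordinal m}_{-(j-1)}}(\Ee{B} \circ U).
\]
By \cref{mainlem:horn-filler-trivial} this horn-filler fibration is a homotopy equivalence, and being a fibration it is acyclic, so its fibres are contractible. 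Function extensionality then makes $\prd d$ of these contractibles contractible, and a fibration with contractible fibres is a homotopy equivalence, finishing the proof.

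The main obstacle I anticipate is bookkeeping along the chain of identifications: that $M^{\EE{B}}(\cob m j) \cong M^{\Ee{B}}_{\ordinal m}$ via $R$ in a strictly functorial way, that removing the $\cob{m-1}{j-1}$-component translates \emph{strictly} (not merely up to homotopy) to the combinatorial horn of \cref{mainlem:horn-filler-trivial}, and that the dependence of the $\cob m j$-fibre on the chosen $c_d$ through $\overline v$ is tracked correctly in the distributivity step. Once these identifications are in place, the rest is mechanical.
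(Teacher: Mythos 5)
Your proposal is correct and is essentially the paper's own argument: both factor the fibration through the missing $\cob{m-1}{j-1}$-component, transport the resulting two-step extension along the coslice isomorphism of \cref{lem:coslices-equivalent} to the horn-filler fibration of \cref{mainlem:horn-filler-trivial}, and then combine the distributivity law with function extensionality (plus stability of acyclic fibrations under pullback) to conclude. The only difference is presentational --- you compute fibres directly, whereas the paper phrases the same computation in terms of total spaces via the pullback $Q$ and the commuting square \eqref{eq:commuting-square}; the bookkeeping points you flag (strictness of the coslice identification and the fact that the $\cob m j$-fibre depends on $c$ only through $c(d)$, because $\AA{A}$ sends $\cob m j \to \cob{m-1}{j-1}$ to the identity) are exactly the ones the paper checks, and they do go through.
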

\begin{proof}
There is a single morphism in $\opccat\left(\cob m j, \cob{m-1}{j-1}\right)$.
For the category ${\cob m j \sslash \opccat}$ where this morphism is removed,
we write ${\cob m j \sslash \opccat - \cob{m-1}{j-1}}$.
The fibration \eqref{eq:fib-with-set-notation} can then be written as
 \begin{equation}
  \NN(\cob m j) \fib \lim_{\cob m j \sslash \truncCat \ccat - \cob {m-1}{j-1}} \NN \circ U.
 \end{equation}
By construction of $\NN$, we have a natural transformation $v : (\NN \circ \bproj-_{\opccat}) \times \AA{A} \to \EE{B}$, which gives rise to a morphism
 \begin{equation} \label{eq:w}
  w : \big( \lim_{\cob m j \sslash \truncCat\opccat - \cob{m-1}{j-1}} \NN \circ U \big)  \times \AA{A} (\cob m j) \; \to \; \lim_{\cob m j \sslash \opccat - \cob{m-1}{j-1}} \EE{B} \circ U.
 \end{equation}
 Consider the diagram shown in \cref{fig:bigdiagram}, in which $Q$ is defined to be the pullback. 
\begin{figure}[ht]
\begin{center}
 \begin{tikzpicture}[align=left, node distance=1.5cm]
  \node [color=black](C3) {$Q$}; 
  \node [below of=C3](C2) {}; 
  \node [below of=C2](C1) {$\big( \lim_{\cob m j \sslash \truncCat\opccat - \cob{m-1}{j-1}} \NN \circ U \big)  \times \AA{A} (\cob m j)$}; 

%   fake node
  \node [right of=C3, node distance=3.0cm](CC3){};

%   use fake nodes to position the middle column
  \node [below of=CC3, node distance=4.1cm](B3) {$\sm{k : M^{\EE{B}}(\cob m j)} F^{\EE{B}}(\cob m j,k)$}; 
  \node [below of=B3](B2) {$M^{\EE{B}}(\cob m j)$}; 
  \node [below of=B2](B1) {$\lim_{\cob m j \sslash \opccat - \cob {m-1} {j-1}} \EE{B} \circ U$}; 

  \node [right of=B2, node distance=4.0cm](A2) {$\sm{t : M^{\EE{B}}(\cob {m-1} {j-1})} F^{\EE{B}}(\cob {m-1} {j-1},t)$}; 
  \node [below of=A2](A1) {$M^{\EE{B}}(\cob {m-1} {j-1})$};

  \draw[biggerArrow, dashed] (C3) to node {} (C1);
  \draw[->>] (B3) to node {} (B2);
  \draw[->>] (B2) to node {} (B1);
  \draw[->>] (A2) to node {} (A1);
  \draw[biggerArrow, dashed, bend left=40] (C3) to node {} (B3);
  \draw[biggerArrow, bend right=20] (C1) to node [below left] {$w$} (B1); 
  \draw[->>] (B2) to node {} (A2);
  \draw[->>] (B1) to node [above] {$\proj$} (A1);

  \draw [dashed] ($(C3.center) + (0.2cm,-0.8cm)$) --++ (0.6cm,-0.2cm) --++ (0,0.6cm);
\end{tikzpicture}
\caption{Derivation of a homotopy equivalence}\label{fig:bigdiagram}
\end{center} 
\end{figure}
 The right part (everything without the leftmost column) of that diagram comes from applying the functor
 $\lim_{-} (\EE{B} \circ U)$
 to the diagram in $\mathsf{Sub}(\cob m j \slash \opccat)$ that is shown in \cref{fig:smalldiagram}.
\begin{figure}[ht]
\begin{center}
  \begin{tikzpicture}[align=left, node distance=1.5cm]
% fake nodes
  \node [color=black](C3) {}; 
  \node [below of=C3](C2) {}; 
  \node [below of=C2](C1) {}; 

  \node [right of=C3](B3) {$\cob m j \slash \opccat$}; 
  \node [right of=C2](B2) {$\cob m j \sslash \opccat$}; 
  \node [right of=C1](B1) {$\cob m j \sslash \opccat - \cob {m-1} {j-1}$}; 

  \node [right of=B2, node distance=4cm](A2) {$\cob {m-1} {j-1} \slash \opccat$}; 
  \node [right of=B1, node distance=4cm](A1) {$\cob {m-1} {j-1} \sslash \opccat$};

  \draw[->] (B3) to node {} (B2);
  \draw[->] (B2) to node {} (B1);
  \draw[->] (A2) to node {} (A1);
  \draw[->] (B2) to node {} (A2);
  \draw[->] (B1) to node {} (A1);
 \end{tikzpicture}
\caption{A small diagram in $\mathsf{Sub}(\cob m j \slash \opccat)$.
 This uses the principle that, in an inverse category $\I$ with a morphism $k : x \to y$, the categories $k \sslash (x \sslash \I)$ and $y \sslash \I$ are isomorphic.}\label{fig:smalldiagram}
\end{center} 
\end{figure}
 
 In \cref{fig:bigdiagram}, the fibration labelled $\proj$ comes of course from 
 \begin{equation}
  \left(\cob m j \sslash \opccat - \cob {m-1} {j-1}\right) \supset \left(\cob {m-1} {j-1} \sslash \opccat\right),
 \end{equation}
 as shown in \cref{fig:smalldiagram}.
 We give it a name solely to make referencing it easier.
 Our goal is to derive a representation of $Q$.
 As the right square is a pullback square by \cref{lem:continuous-functor}, we have
 \begin{equation}
  M^{\EE{B}}(\cob m j) \; \cong \; \smonlybig{t : \lim_{\cob m j \sslash \opccat - \cob {m-1} {j-1}} \EE{B}} F^{\EE{B}}(\cob {m-1} {j-1}, \proj (t)).
 \end{equation}
 Using this, can write the top expression of the middle column as
 \begin{equation}
 \begin{aligned}
  & \smonlybig{k : M^{\EE{B}}(\cob m j)} F^{\EE{B}}(\cob m j,k) \\
  \eqvsym \quad & \smonlybig{t : \lim_{\cob m j \sslash \opccat - \cob {m-1} {j-1}} \EE{B}}   \smonlybig{n : F^{\EE{B}}(\cob {m-1} {j-1}, \proj (t))} F^{\EE{B}}(\cob m j,(t,n)).
 \end{aligned}
 \end{equation}
 The pullback $Q$ is thus
 \begin{equation}
  \begin{alignedat}{2}
  &\smonlybig{p : \lim_{
  \cob m j \sslash \truncCat\opccat - \cob {m-1} {j-1}}
  \NN \circ U } &&  \smonlybig{a : \AA{A} (\cob m j)}  \\
  &&&\smonlybig{n : F^{\EE{B}}(\cob {m-1} {j-1}, \proj(w(p,a))) } \\
  &&&\phantom{\Sigma (} F^{\EE{B}}(\cob m j,(w(p,a),n)).
  \end{alignedat}
 \end{equation}
 The composition of the two vertical fibrations in the middle column is a homotopy equivalence by \cref{mainlem:horn-filler-trivial} and \cref{lem:coslices-equivalent}.
 As acyclic fibrations are stable under pullback, the fibration 
 \begin{equation}
  Q \; \fib \; \big(\lim_{\cob m j \sslash \truncCat\opccat - \cob {m-1} {j-1}} \NN\big) \times \AA{A} (\cob m j)
 \end{equation}
 is a homotopy equivalence as well.
 Function extensionality implies that a family of contractible types is contractible (i.e.\ that acyclic fibrations are preserved by $\Pi$), and we get that the first projection
 \begin{equation} \label{eq:havetrivfib}
 \begin{tikzpicture}[align=left, node distance=5cm]
  \node [color=black, align=left](C3) {$\smonlybig{p : \lim_{\Set{x \in \truncCat\opccat | x \prec \cob m j , x \not\jdeq \cob {m-1} {j-1}}} \NN }$ \\ $\phantom{\Sigma} \prd{a : \AA{A} (\cob m j)}
      \sm{n : F^{\EE{B}}(\cob {m-1} {j-1}, \proj(w(p,a)))} 
                      F^{\EE{B}}(\cob m j,(w(p,a),n))$}; 
  \node [below of=C3, node distance=2.5cm](C1) {$\lim_{\Set{x \in \truncCat\opccat | x \prec \cob m j , x \not\jdeq \cob {m-1} {j-1}}} \NN$}; 

  \draw[->>] (C3) to node {} (C1);
 \end{tikzpicture}
 \end{equation} 
 is also a homotopy equivalence. The lemma is therefore shown if we can prove that the domain of the above fibration \eqref{eq:havetrivfib}, a rather lengthy expression, is homotopy equivalent to $\NN(\cob m j)$.
 Our first step is to apply the distributivity law~\eqref{eq:distributivity} to transform this expression to
 \begin{equation}\label{eq:N-N-split}
 \begin{aligned}
   &\smonlybig{p : \lim_{\Set{x \in \truncCat\opccat | x \prec \cob m j , x \not\jdeq \cob {m-1} {j-1}}} \NN } \\
   &
   \smonlybig{n : \prd{a : \AA{A} (\cob m j)}
     F^{\EE{B}}(\cob {m-1} {j-1}, \proj(w(p,a))) } \\
   &\phantom{\Sigma\big(} \prd{a:\AA{A}(\cob m j)} F^{\EE{B}}(\cob m j,(w(p,a),n(a))).
 \end{aligned}
 \end{equation}
 When we look at the following square, in which $w$ is the map \eqref{eq:w}, $w'$ is induced by the natural transformation $v$ in the same way as $w$, and $\proj$, $\proj'$ come from the restriction to subcategories,
 \begin{equation} \label{eq:commuting-square}
 \begin{tikzpicture}[align=left, node distance=5cm]
  \node [color=black](X1) {$\big(\lim_{
  \cob m j \sslash \truncCat\opccat - \cob {m-1} {j-1}
  } \NN \circ U\big) \times \AA{A} (\cob m j)$}; 
  \node [below of=X1, node distance=2cm](X2) {$\big(\lim_{
  \cob {m-1} {j-1} \sslash \truncCat\opccat 
  } \NN \circ U\big) \times \AA{A} (\cob {m-1} {j-1})$}; 
  \node [right of=X1, node distance=6cm] (Y1) {$\lim_{\cob m j\sslash \opccat - \cob {m-1} {j-1}} \EE{B} \circ U$};
  \node [right of=X2, node distance=6cm] (Y2) {$\lim_{\cob {m-1} {j-1}\sslash \opccat} \EE{B} \circ U$};

  \draw[->>] (X1) to node [left] {$\proj'$} (X2);
  \draw[->>] (Y1) to node [right] {$\proj$} (Y2);
  \draw[biggerArrow] (X1) to node [above] {$w$} (Y1);
  \draw[biggerArrow] (X2) to node [above] {$w'$} (Y2);
 \end{tikzpicture}
 \end{equation} 
 we can see that it commutes due to the naturality of the natural transformation $v$. In particular, note that $\AA{A}$ maps the single morphism $\cob m j \to \cob {m-1} {j-1}$ to the identity on $A^{m+1-j}$. 
 This is exactly what is needed to see that the second line of~\eqref{eq:N-N-split} corresponds to the ``missing $\Sigma$-component'' $\NN(\cob {m-1} {j-1})$ in the limit of the first line. 
 Hence, the first and the second line can be ``merged'' and are equivalent to $\lim_{\cob m j \sslash \truncCat\opccat} \NN \circ U$, in other words, $M^{\NN}(\cob m j)$. 
 Comparing the third line of \eqref{eq:N-N-split} with the definition of the ``fibrant diagram of natural transformations'' (see \eqref{eq:nat-trans-total-def}), we see that \eqref{eq:N-N-split} is indeed equivalent to $\NN(\cob m j)$, as required.
\end{proof}

By pullback (\cref{lem:continuous-functor} and preservation of homotopy equivalences along pullbacks), we immediately get:
\begin{corollary} \label{cor:main}
 Let $D$ be a downwards closed subcategory of $\opccat$ which does not contain the objects $\cob m j$ and $\cob {m-1} {j-1}$, but all other predecessors of $\cob m j$.
 The full subcategory of $\opccat$ which has all the objects of $D$ and the objects $\cob {m-1} {j-1}$, $\cob m j$ (for which we write $D +\cob {m-1} {j-1} + \cob m j$) is also downwards closed and the fibration
 \begin{equation}
  \lim_{D + \cob {m-1} {j-1} + \cob m j} \NN \; \fib \; \lim_{D} \NN
 \end{equation}
 is a homotopy equivalence. \qed
\end{corollary}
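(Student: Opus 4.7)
The plan is to derive the corollary as a direct consequence of \cref{lem:N-horn-contr} via a pullback argument, as the sentence preceding the statement already suggests. First I would verify that the subcategory $D + \cob{m-1}{j-1} + \cob m j$ is in fact downwards closed: any predecessor of $\cob m j$ is, by hypothesis on $D$, either in $D$ or equal to $\cob{m-1}{j-1}$; and any predecessor of $\cob{m-1}{j-1}$ is (via composition with the unique morphism $\cob m j \to \cob{m-1}{j-1}$) also a predecessor of $\cob m j$, hence already covered. This is routine bookkeeping.

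The key step is to set up the appropriate pullback square in $\mathsf{Sub}(\opccat)$. Writing $\overline{\cob m j}$ for the downwards closed subcategory of $\truncCat\opccat$ generated by $\cob m j$, I would observe the two set-theoretic identifications $D \cup \overline{\cob m j} \jdeq D + \cob{m-1}{j-1} + \cob m j$ and $D \cap \overline{\cob m j} \jdeq \Set{x \in \truncCat\opccat | x \prec \cob m j, x \not\jdeq \cob{m-1}{j-1}}$. The second of these is precisely the index category appearing in \cref{lem:N-horn-contr}. Applying \cref{lem:continuous-functor} to the resulting pullback square in $\mathsf{Sub}(\opccat)$, the functor $\lim_{-}\NN$ produces a pullback square in $\ttfc$ whose right-hand vertical morphism $\NN(\cob m j) \fib \lim_{D \cap \overline{\cob m j}}\NN$ is exactly the fibration of \cref{lem:N-horn-contr} (using \cref{lem:change-notation-poset} to identify $\NN(\cob m j)$ with $\lim_{\overline{\cob m j}}\NN$).

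To conclude, \cref{lem:N-horn-contr} tells us that this right-hand vertical morphism is a homotopy equivalence, and being a fibration by \cref{lem:sub-gives-fibs}, it is an acyclic fibration. Since acyclic fibrations in a type-theoretic fibration category are stable under pullback (as recalled in \cref{sec:ttfcs}), the left-hand vertical morphism of the square, which is the named fibration $\lim_{D + \cob{m-1}{j-1} + \cob m j}\NN \fib \lim_D \NN$, is an acyclic fibration as well and in particular a homotopy equivalence. All the real work sits in \cref{lem:N-horn-contr}; the only subtle point here is matching up the four subcategories of the pullback square correctly, and that is a purely combinatorial check.
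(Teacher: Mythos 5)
Your proposal is correct and is exactly the argument the paper intends: the paper derives \cref{cor:main} from \cref{lem:N-horn-contr} in one line ``by pullback'', via \cref{lem:continuous-functor} applied to $J \jdeq D$ and $K \jdeq \overline{\cob m j}$ together with stability of acyclic fibrations under pullback. Your identifications of $D \cup \overline{\cob m j}$ and $D \cap \overline{\cob m j}$ and the downwards-closedness check are all correct.
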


\cref{cor:main} is the crucial statement that summarises all of our efforts so far.
We can use it to ``add and remove'' contractible $\Sigma$-components in the same way as we did it in the motivating examples (\cref{sec:motivation}). 
More precisely, we exploit that we can group together components of $\opccat$ in two different ways.
Our main lemma is the following:
\begin{mainlemma} \label{lem:main}
 Given types $A, B$, recall that we have defined $s : B \to (A \toomega B)$ in \cref{def:canonical-function}.
 Assume further that we are given a point $\apoint : A$ (i.e.\ we regard $s$ as a morphism in $\fibslice \ttfc A$ instead of $\ttfc$).
 Then, the function $s$ is a homotopy equivalence.
\end{mainlemma}

\begin{proof}
Using the point $\apoint$, we define $\NN$ and $\overline s: B \to \lim_{\truncCat \opccat}\NN$ as before
in \eqref{eq:s-extended}, and consider the following:
\begin{equation} \label{eq:diagram-3-4-2}
\begin{tikzpicture}[align=left, node distance=2.5cm]
  \node (firstB) {$B$}; 
  \node [right of = firstB] (firstN) {$\lim_{\truncCat\opccat} \NN$}; 
  \node [right of = firstN](secondN) {$\NN(\cob 0 1)$}; 
  \node [right of = secondN] (secondB) {$B$}; 
  \node [below of = firstN, node distance=1.5cm] (thirdN) {$A \toomega B$}; 
  
  \draw[biggerArrow] (firstB) to node [above] {$\overline s$} (firstN);
  \draw[->>] (firstN) to node [above] {$\mathsf{pr'}$} (secondN);
  \draw[biggerArrow] (secondN) to node [above] {$\sim$} (secondB);
  \draw[biggerArrow] (firstB) to node [below left] {$s$} (thirdN);
  \draw[biggerArrow] (firstN) to node [right] {$\mathsf{pr}$} (thirdN);
\end{tikzpicture}
\end{equation}
The commutativity of the triangle on the left is given by \eqref{eq:s-overline-s}. Our first goal is to show that the fibration $\mathsf{pr'}$ is a homotopy equivalence.

\pgfmathsetmacro{\nodevert}{14} % vertical node distance in mm
\pgfmathsetmacro{\nodehor}{9} % horizontal node distance in mm
\pgfmathsetmacro{\dist}{3.5} % distance of 'ellipses' from centres in mm
\pgfmathsetmacro{\dashedheigth}{3} % heigth of dashed lines in mm

\pgfmathsetmacro{\angle}{atan(\nodevert/\nodehor)}
\pgfmathsetmacro{\sang}{\angle+90}
\pgfmathsetmacro{\eang}{\angle+270}
\pgfmathsetmacro{\fstlength}{\dashedheigth / sin(\angle)}
\pgfmathsetmacro{\sndlength}{\fstlength + cot(\angle) * 2 * \dist}

% this should be in the header, but I did not manage to make it work (arguments #number cannot be used in math mode, but without math mode, I cannot calculate the coordinates).
\newcommand{\drawincompgrouping}[1]{
  \draw
    ($(#1) + (\sang:\dist mm)$) arc (\sang:\eang:\dist mm);
  \draw [dashed]
    ($(#1) + (\sang:\dist mm)$) --++ (\sang-90:\fstlength mm)
    ($(#1) + (\eang:\dist mm)$) --++ (\sang-90:\sndlength mm); 
}

\begin{figure}[ht]
\subfloat[The sequence $D_0 \subset D_1 \subset D_2 \subset \ldots$ starts with $\{\cob 0 1\}$]{ 
\label{subfig:D-s}
\begin{tikzpicture}[node distance=\nodehor mm]
  \node (xA0) {$\cob 0 0$}; 
  \node [right of=xA0] (xA1) {$\cob 0 1$};
  
  \node [above of=xA0, node distance = \nodevert mm] (xB0) {$\cob 1 0$}; 
  \node [right of=xB0] (xB1) {$\cob 1 1$};
  \node [right of=xB1] (xB2) {$\cob 1 2$};
  \node [right of=xB2] (xB3) {}; % a phantom node for typesetting
  
  \node [above of=xB0, node distance = \nodevert mm] (xC0) {$\cob 2 0$}; 
  \node [right of=xC0] (xC1) {$\cob 2 1$};
  \node [right of=xC1] (xC2) {$\cob 2 2$};
  \node [right of=xC2] (xC3) {$\cob 2 3$};
  
  \node [above of=xC0, node distance = \nodevert mm] (xD0) {$\cob 3 0$}; 
  \node [right of=xD0] (xD1) {$\cob 3 1$};
  \node [right of=xD1] (xD2) {$\cob 3 2$};
  \node [right of=xD2] (xD3) {$\cob 3 3$};
  \node [right of=xD3] (xD4) {$\cob 3 4$};
  
  \node [above of=xD0, node distance = \nodevert mm] (xE0) {$\cob 4 0$}; 
  \node [right of=xE0] (xE1) {$\cob 4 1$};
  \node [right of=xE1] (xE2) {$\cob 4 2$};
  \node [right of=xE2] (xE3) {$\cob 4 3$};
  \node [right of=xE3] (xE4) {$\cob 4 4$};
  \node [right of=xE4] (xE5) {$\cob 4 5$};
  
  \draw[black] \convexpath{xA0,xB1}{\dist mm};
  \draw[black] \convexpath{xB0,xC1}{\dist mm};
  \draw[black] \convexpath{xC0,xD1}{\dist mm};
  \draw[black] \convexpath{xD0,xE1}{\dist mm};
  \draw[black] \convexpath{xB2,xC3}{\dist mm};
  \draw[black] \convexpath{xC2,xD3}{\dist mm};
  \draw[black] \convexpath{xD2,xE3}{\dist mm};
  \draw[black] \convexpath{xD4,xE5}{\dist mm};

  \drawincompgrouping{xE0}
  \drawincompgrouping{xE2}
  \drawincompgrouping{xE4}
\end{tikzpicture}
}
\, \vrule \, 
\subfloat[The sequence $D_0' \subset D_1' \subset D_2' \subset \ldots$ starts with $\{\cob n 0\}$]{ % 
\label{subfig:D-prime-s}
\begin{tikzpicture}[node distance=\nodehor mm]
  \node (xA0) {$\cob 0 0$}; 
  \node [right of=xA0] (xA1) {$\cob 0 1$};
  
  \node [above of=xA0, node distance = \nodevert mm] (xB0) {$\cob 1 0$}; 
  \node [right of=xB0] (xB1) {$\cob 1 1$};
  \node [right of=xB1] (xB2) {$\cob 1 2$};
  \node [right of=xB2] (xB3) {}; % a phantom node for typesetting
  
  \node [above of=xB0, node distance = \nodevert mm] (xC0) {$\cob 2 0$}; 
  \node [right of=xC0] (xC1) {$\cob 2 1$};
  \node [right of=xC1] (xC2) {$\cob 2 2$};
  \node [right of=xC2] (xC3) {$\cob 2 3$};
  
  \node [above of=xC0, node distance = \nodevert mm] (xD0) {$\cob 3 0$}; 
  \node [right of=xD0] (xD1) {$\cob 3 1$};
  \node [right of=xD1] (xD2) {$\cob 3 2$};
  \node [right of=xD2] (xD3) {$\cob 3 3$};
  \node [right of=xD3] (xD4) {$\cob 3 4$};
  
  \node [above of=xD0, node distance = \nodevert mm] (xE0) {$\cob 4 0$}; 
  \node [right of=xE0] (xE1) {$\cob 4 1$};
  \node [right of=xE1] (xE2) {$\cob 4 2$};
  \node [right of=xE2] (xE3) {$\cob 4 3$};
  \node [right of=xE3] (xE4) {$\cob 4 4$};
  \node [right of=xE4] (xE5) {$\cob 4 5$};
  
  \draw[black] \convexpath{xA1,xB2}{\dist mm};
  \draw[black] \convexpath{xB1,xC2}{\dist mm};
  \draw[black] \convexpath{xC1,xD2}{\dist mm};
  \draw[black] \convexpath{xD1,xE2}{\dist mm};
  \draw[black] \convexpath{xC3,xD4}{\dist mm};
  \draw[black] \convexpath{xD3,xE4}{\dist mm};

  \drawincompgrouping{xE1}
  \drawincompgrouping{xE3}
  \drawincompgrouping{xE5}
  
\end{tikzpicture}
} 
\caption{
Two infinite sequences of downwards closed full subcategories of $\truncCat \opccat$, constructed in the proof of \cref{lem:main}: the first starts with $D_0 \defeq \{\cob 0 1\}$. 
In each step, exactly two objects are added, paired as shown in the left drawing.
The second sequence starts with $D_0' \defeq \{\cob n 0\}$ (the leftmost column), i.e.\ $D_0'$ is isomorphic to $\oppo\omega$.
The pairings are shown in the right drawing.
The reader who has read through the proof of \cref{lem:main} is invited to combine the current figure with \cref{fig:N} in order to reconstruct the proof of \cref{satz:special-case-1}. 
}
\end{figure}

Consider the set $S \defeq \Set{(m,j) \in \N^2 | \text{$j$ is even and $j \leq m+1$}}$.
A pair $(m,j)$ is in $S$ if and only if $\cob m j$ is an object in an ``odd column'' of $\opccat$ in \cref{fig:opccat} on page \pageref{fig:opccat} (where we consider the leftmost column the ``first'').
Define a total order on $S$ by letting $(k,i)$ be smaller than $(m,j)$ if either $k+i < m+j$ or ($k+i \jdeq m+j$ and $i < j$).
We represent this total order by an isomorphism $f : \N_+ \to S$ (where $\N_+$ are the positive natural numbers) which has the property that $f(n)$ is always smaller than $f(n+1)$.
Write $f_1(n)$ and $f_2(n)$ for the first respectively the second projection of $f(n)$.

Let us define a sequence 
 $D_0 \subset D_1 \subset D_2 \subset D_3 \subset \ldots$
of full subcategories of $\truncCat \opccat$ by
\begin{align}
 D_0 &\defeq \{\cob 0 1\} \\
 D_n &\defeq D_{n-1} \, + \, \cob {f_1(n)} {f_2(n)} \, + \, \cob {f_1(n)+1} {f_2(n)+1}.
\end{align}
This construction illustrated in \cref{subfig:D-s}.
It is easy to see that every object $\cob m j$ is added exactly once, i.e.\ it is either $\cob 0 1$ or it is of the form $\cob {f_1(n)} {f_2(n)}$ or of the form $\cob {f_1(n)+1} {f_2(n)+1}$ for exactly one $n$. We have chosen the total order on $S$ in such a way that every $D_n$ is a downwards closed full subcategory of $\truncCat\opccat$.
Applying \cref{cor:main}, we get a sequence
\begin{equation}
 \lim_{D_0}\NN \trivfibre \lim_{D_1}\NN \trivfibre \lim_{D_2}\NN \trivfibre \lim_{D_3}\NN \trivfibre \ldots
\end{equation}
of acyclic fibrations.
\cref{lem:acyclic_limit} then shows that the canonical map
\begin{equation} %\label{eq:trivfib1}
 \lim_{\opccat}\NN \trivfib \lim_{D_0}\NN 
\end{equation}
is an acyclic fibration.
As $\lim_{D_0}\NN$ is simply $\NN(\cob 0 1)$, this proves that $\mathsf{pr'}$ is indeed a homotopy equivalence.

Next, we want to show the same about $\mathsf{pr}$. We proceed very similarly.
This time, we define $S' \defeq \Set{(m,j) \in \N^2 | \text{$j$ is odd and $j \leq m+1$}}$.
A pair $(m,j)$ is consequently in $S'$ if and only if $\cob m j$ is an object in an ``even'' column of \cref{fig:N}.
As before, we define an isomorphism $f' : \N_+ \to S'$, and 
define a sequence 
 $D_0' \subset D_1' \subset D_2' \subset D_3' \subset \ldots$
of full subcategories of $\truncCat \opccat$ by
\begin{align}
 D_0' &\defeq \{\cob m 0\} \qquad \text{(i.e.\ the full subcategory corresponding to $\truncCat\deltop$)}\\
 D_n' &\defeq D_{n-1}' \, + \, \cob {f_1(n)} {f_2(n)} \, + \, \cob {f_1(n)+1} {f_2(n)+1}.
\end{align}
We illustrate the construction of this sequence in \cref{subfig:D-prime-s}, which the reader is encourage to compare with \cref{subfig:D-s}.
Again, every object $\cob m j$ is added exactly once, and every $D_n$ is downwards closed.
\cref{cor:main} and \cref{lem:acyclic_limit} then tell us that $\lim_{\truncCat\opccat}\NN \fib \lim_{\{\cob m 0\}}\NN$ is an acyclic fibration. Hence, $\mathsf{pr}$ is indeed a homotopy equivalence, as claimed.

We take another look at the diagram \eqref{eq:diagram-3-4-2}. The composition of the three horizontal arrows is the identity by \eqref{eq:overline-s-is-section}. But homotopy equivalences satisfy ``2-out-of-3'', and we can conclude that $\overline s$ is an equivalence. Using ``2-out-of-3'' again, we see that $s$ is an equivalence as well. 
\end{proof}

It is straightforward to define what it mean for a type-theoretic fibration category to have propositional truncations by imitating the characterisation given in \cref{sec:motivation}.
We now show:
\begin{lemma} \label{lem:inf-lastBeforeMain}
 If $\ttfc$ has propositional truncations,
 then the canonical function 
 \begin{equation}
  s : B \to (A \toomega B), 
 \end{equation}
 viewed as a morphism in $\ttfc$, is a homotopy equivalence assuming $\proptrunc A$. 
 More precisely, we can construct a function
 \begin{equation}
  \proptrunc A \to \isequiv(s)
 \end{equation}
 in $\ttfc$.
\end{lemma}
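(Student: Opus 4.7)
The plan is to leverage \cref{lem:main} together with the universal property of the propositional truncation, using the fact that $\isequiv(s)$ is propositional. First I would observe that \cref{lem:main} can be read categorically as follows: after pulling back $s$ along the fibration $A \fib \unit$ to obtain a morphism $s_A$ in $\fibslice \ttfc A$, the object $\isequiv(s_A)$ (defined in the slice using the internal language, which is available since we only use $\unit$, $\Sigma$, $\Pi$ and identity types) is inhabited, i.e.\ admits a global section over $A$. Unwinding what this means back in $\ttfc$, the section is a morphism $A \to \isequiv(s)$, where $\isequiv(s)$ is the internally-defined type of proofs that $s$ is an equivalence, constructed in $\ttfc$ from $s : B \to (A \toomega B)$.

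Next I would verify that $\isequiv(s)$ is a \emph{proposition} in $\ttfc$. This is the standard fact that being an equivalence is a propositional predicate, and its proof uses only $\unit$, $\Sigma$, $\Pi$, identity types, and function extensionality, all of which are available in $\ttfc$ by our standing assumption. In categorical terms, this means that the diagonal $\isequiv(s) \to \isequiv(s) \times \isequiv(s)$ is a homotopy equivalence, or equivalently (by the factorisation axiom and \cite[Lemma 2.4]{shulman_inversediagrams}) that the canonical $\isequiv(s) \to P \isequiv(s)$ is an acyclic fibration.

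Finally, I would invoke the assumed universal property of propositional truncation in $\ttfc$: for any type $X$ and any proposition $P$, composition with $|{-}|_X : X \to \proptrunc X$ induces an equivalence $(\proptrunc X \to P) \arrowsim (X \to P)$. Applying this with $X \defeq A$ and $P \defeq \isequiv(s)$, the map $A \to \isequiv(s)$ obtained in the first step lifts (uniquely up to homotopy) to the desired morphism $\proptrunc A \to \isequiv(s)$ in $\ttfc$.

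The only step requiring any care is the first one, namely the translation between \cref{lem:main}'s statement ``$s$ is a homotopy equivalence in $\fibslice \ttfc A$'' and the existence of an internal term $A \to \isequiv(s)$ in $\ttfc$; here one uses, as already discussed in \cref{sec:ttfcs} and at the end of \cref{sec:extending-sst}, that pullback along $A \fib \unit$ commutes with the formation of the type $\isequiv$ (because this construction only involves $\Sigma$, $\Pi$ and identity types, all of which are stable under pullback of fibrations), so that $\isequiv(s_A)$ in the slice is the pullback of $\isequiv(s)$, and a global section of this pullback is precisely a morphism $A \to \isequiv(s)$ in $\ttfc$. No further obstacle is expected.
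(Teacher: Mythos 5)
Your proposal is correct and follows essentially the same route as the paper: use \cref{lem:main} to extract a morphism $A \to \isequiv(s)$ in $\ttfc$ (via the pullback of $s$ along $A \fib \unit$), then apply the ordinary universal property of the propositional truncation, using that $\isequiv(s)$ is propositional. The paper's proof is terser and leaves the propositionality of $\isequiv(s)$ and the commutation of $\isequiv$ with pullback implicit, but your more explicit treatment of these points is accurate and adds nothing that would diverge from the intended argument.
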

\begin{proof}
 We have shown in \cref{lem:main} that $s$ is a homotopy equivalence in $\fibslice \ttfc A$, i.e.\ if we pull back its domain and codomain along $A \fib \unit$.
 In $\ttfc$, this means that
 \begin{equation}
  \lam {(a,b)} (a , s(b)) : A \times B \; \to \; A \times (A \toomega B)
 \end{equation}
 is an equivalence, but this implies 
 \begin{equation}
  A \to \isequiv(s).
 \end{equation}
 The claim then follows from the ordinary universal property of the propositional truncation.
\end{proof}
This allows us to prove our main result:
\begin{theorem}[General universal property of the propositional truncation] \label{thm:result}
 Let $\ttfc$ be a type-theoretic fibration category that satisfies function extensionality, 
 has propositional truncations, and Reedy $\oppo\omega$-limits. 
 Let $A$ and $B$ be two types, i.e.\ objects in $\ttfc$. 
%  If $A$ is merely inhabited, then the type of coherently constant functions from $A$ to $B$ is homotopy equivalent to $B$ itself:
 Using the canonical function $s : B \to (A \toomega B)$ as defined in \cref{def:canonical-function},
 we can construct a function
 \begin{equation}
  (\proptrunc A \to B) \; \to \; (A \toomega B) ,
 \end{equation}
 and this function is a homotopy equivalence.
\end{theorem}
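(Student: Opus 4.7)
The plan is to define $\Phi : (\proptrunc A \to B) \to (A \toomega B)$ as the composite of two homotopy equivalences
\begin{equation*}
(\proptrunc A \to B) \; \xrightarrow{e_1} \; (\proptrunc A \to (A \toomega B)) \; \xrightarrow{e_2} \; (A \toomega B)
\end{equation*}
and then conclude that $\Phi$ is itself a homotopy equivalence. Crucially, both factors will be equivalences \emph{without} assuming that $\proptrunc A$ is inhabited.

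For $e_1$, I post-compose with $s : B \to (A \toomega B)$. \cref{lem:inf-lastBeforeMain} supplies a function $\phi : \proptrunc A \to \isequiv(s)$, and the pointwise formula $h \mapsto (\lambda x.\, s^{-1}_{\phi(x)}(h(x)))$ together with function extensionality yields a genuine two-sided inverse of post-composition with $s$. This argument is unconditional on $\proptrunc A$: when $\proptrunc A$ is not inhabited both source and target of $e_1$ are (equivalent to) $\unit$ and the claim is vacuous; when $\proptrunc A$ is inhabited $s$ itself is an equivalence, and post-composition by an equivalence is an equivalence in the usual way.

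For $e_2$, I use an infinite-dimensional analog of \cref{lem:ignore-trunc-A}. Since $A \toomega B \jdeq \lim_{\truncCat\deltop}\Nn_{A,B}$ and the functor $\proptrunc A \to (-)$ preserves limits, there is an isomorphism $\proptrunc A \to (A \toomega B) \; \cong \; \lim_{\truncCat \deltop}(\proptrunc A \to \Nn_{A,B})$. By the construction of $\Nn$, each level $\Nn_{\ordinal n}$ is a finite nested $\Sigma$-type whose successive fibre components $F^{\Nn}_{(i,m)} \jdeq \prd{d : \Aa{A}_i} F^{\Ee{B}}(i,\overline v (m,d))$ are $\Pi$-types over the powers of $A$ produced by $\Aa A$. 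Iterating \cref{lem:ignore-trunc-A} therefore gives a homotopy equivalence $(\proptrunc A \to \Nn_{\ordinal n}) \eqvsym \Nn_{\ordinal n}$ at every level. These equivalences are built canonically from the distributivity law and from the equivalence $g \mapsto (\lambda x.\, g)$ between $\prd A B$ and $\proptrunc A \to \prd A B$, and inspecting the recursive definition of $\Nn$ shows that they are natural in $n$ with respect to the matching-object projections. They therefore assemble into a levelwise homotopy equivalence of Reedy fibrant diagrams over $\truncCat \deltop$, and passage to the Reedy limit produces $e_2$.

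The main obstacle I expect is the naturality required to assemble the level-wise equivalences into $e_2$: each individual equivalence $(\proptrunc A \to \Nn_{\ordinal n}) \eqvsym \Nn_{\ordinal n}$ is a routine iterated application of \cref{lem:ignore-trunc-A}, but one must trace through the Reedy fibrant definition of $\Nn$ and verify that these equivalences commute strictly enough with the structure maps $\Nn_{\ordinal n} \fib \Nn_{\ordinal {n-1}}$ to induce an equivalence on Reedy limits (which is the point at which the Reedy $\oppo\omega$-limit hypothesis enters). Granting this, $\Phi \defeq e_2 \circ e_1$ is a homotopy equivalence as a composite of two homotopy equivalences, which is the desired conclusion.
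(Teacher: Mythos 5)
Your proposal follows essentially the same route as the paper: the first factor is exactly the paper's use of \cref{lem:inf-lastBeforeMain} (post-composition with $s$, inverted pointwise via $\proptrunc A \to \isequiv(s)$), and the second factor is the paper's diagram $\Pp_{\ordinal k} \defeq \proptrunc A \to \Nn_{\ordinal k}$, with a levelwise equivalence to $\Nn$ obtained from \cref{lem:ignore-trunc-A} and then a passage to Reedy $\oppo\omega$-limits. The one step you assert without argument --- that this exponential diagram is itself Reedy fibrant, which must hold before the Reedy-limit hypothesis can be invoked --- is precisely what the paper singles out as the crucial observation, justified there by the fact that the distributivity maps are \emph{strict} (judgmental) inverses, so that each $\proptrunc A \to \Nn_{\ordinal k}$ is isomorphic to a $\Sigma$-type fibred over $\proptrunc A \to \Nn_{\ordinal {k-1}}$.
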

\begin{proof}
%  Together with the ``ordinary'' universal property of the propositional truncation, \cref{lem:main} shows that
%  $\proptrunc A$ implies that 
%  $s : B \to (A \toomega B)$ is a homotopy equivalence.
 From \cref{lem:inf-lastBeforeMain} we can conclude, just as in the special cases in \cref{sec:motivation}, that 
 \begin{align}
  (\proptrunc A \to B)  \; &\to \; \left(\proptrunc A \to (A \toomega B)\right) \\
  f \; &\mapsto \; \lam x s(f(x)) \label{eq:inf-simple-homo-equiv}
%   \lam\big(\proptrunc A \to (A \toomega B)\big) \; \eqvsym \; \big(\proptrunc A \to B\big).
 \end{align}
 is a homotopy equivalence.

 This is not yet what we aim for. We need a statement corresponding to the infinite case of \cref{lem:ignore-trunc-A}, i.e.\ we need to prove that $\proptrunc A \to (A \toomega B)$ is equivalent to $A \toomega B$.
 To do this, we consider the diagram $\Pp : \truncCat{\deltop} \to \ttfc$, defined on objects by 
 \begin{equation}
  \Pp_{\ordinal k} \defeq \proptrunc A \to \Nn_{\ordinal k},
 \end{equation}
and on morphisms by 
\begin{equation} \label{eq:inf-result-isFib}
 \Pp(g) \defeq \lam {(h : \proptrunc A \to \Nn_{\ordinal k})} \lam x \Nn(g)(h(x)). 
\end{equation}
 
 Paolo Capriotti has pointed out that $\Pp$ is Reedy fibrant, and this is a crucial observation.
 As $\Pp$ is defined over a poset, it is enough to show that \eqref{eq:inf-result-isFib} is a fibration for every $g$.
 Our argument is the following:
 The maps in both directions which are used to prove the distributivity law~\eqref{eq:distributivity} are \emph{strict} inverses, i.e.\ their compositions (in both orders) are judgmentally equal to the identities. 
 This means that every $\Pp_i$ is isomorphic to a $\Sigma$-type, where we ``distribute'' $\proptrunc A$ over the 
 $\Sigma$-components.
 From this representation, it is clear that $\Pp(g)$ is always a fibration, as fibrations are closed under composition with isomorphisms.
 
 Because of \cref{lem:ignore-trunc-A} (and the fact that the equivalence there is defined uniformly), there is a natural transformation $\kappa : \Pp \to \Nn$ which is levelwise a homotopy equivalence. By the definition of $\ttfc$ having Reedy $\oppo\omega$-limits, the resulting arrow between the two limits, that is
 \begin{equation} \label{eq:inf-aux}
  \lim_{\truncCat{\deltop}}(\kappa) : \;  \left(\proptrunc A \to (A \toomega B)\right) \; \to \; (A \toomega B),
 \end{equation}
 is a homotopy equivalence as well.
 To conclude, we simply compose \eqref{eq:inf-simple-homo-equiv} and \eqref{eq:inf-aux}.
\end{proof}

\section{Finite Cases} \label{sec:infconstancy-finite}

If $B$ is an $n$-type for some finite fixed number $n$, the higher coherence conditions should intuitively become trivial.
This is obvious for the representation of $\Nn$ and $\Ee{B}$ given in \cref{fig:e-m-ideal,fig:N}, although admittedly not for our actual definition of $\Ee{B}$ in \cref{sec:equality} (and the corresponding definition of $\Nn$ and $\NN$) where it requires a little more thought. This is our main goal for this section.
For the presentation, we assume that the type theory in question has a universe $\UU$, although this assumption is not strictly necessary. 
After this, it will be easy to see that the universal properties of the propositional truncation with an $n$-type as codomain, for any externally fixed number $n$, can be formulated and proved in standard syntactical homotopy type theory.

We start by reversing the statement that ``singletons are contractible'':
\begin{lemma}\label{mainlem:singletons-everywhere} 
 Let us assume that $A$ is a type, $B : A \to \UU$ is a family, and $C : (\sm {a:A} B(a)) \to \UU$ a second family.
 The following are logically equivalent:
 \begin{enumerate}[label={(\roman*)},ref={\roman*}]
  \item For any $a:A$, there is a point $b_a : B(a)$ and a homotopy equivalence \label{item:inf:coc1} 
  \begin{equation}
    \eqv{C(a,b)}{(\id b {b_a})}.   
  \end{equation}
  \item The canonical projection \label{item:inf:coc2}
  \begin{equation}
   \fst : \big(\sm{a:A}\sm{b:B(a)}C(a,b)\big) \to A 
  \end{equation}
  is a homotopy equivalence.
 \end{enumerate}
\end{lemma}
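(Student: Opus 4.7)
The plan is to prove the biconditional by analysing the fibres of $\fst$. Throughout, I would work in the internal type-theoretic language of $\ttfc$, as justified by the coherence discussion at the end of \cref{sec:ttfcs}. The core observation is that the fibre of $\fst : \sm{a:A}\sm{b:B(a)}C(a,b) \to A$ over $a : A$ is canonically equivalent to the $\Sigma$-type $\sm{b:B(a)}C(a,b)$, so condition (\ref{item:inf:coc2}) amounts to asking that this type be contractible for every $a$.

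For (\ref{item:inf:coc1}) $\Rightarrow$ (\ref{item:inf:coc2}), I would fix $a:A$, extract from (\ref{item:inf:coc1}) the point $b_a : B(a)$ together with the family (in $b$) of equivalences $C(a,b) \eqvsym \id b {b_a}$, and assemble these into a fibrewise equivalence over $B(a)$. Taking total spaces yields
\begin{equation*}
\sm{b:B(a)}C(a,b) \eqvsymspace \sm{b:B(a)}\id b {b_a},
\end{equation*}
and the right-hand side is a singleton in the sense of \cref{sec:ttfcs} and hence contractible. Contractibility of all fibres of $\fst$ then shows, by the standard characterisation of equivalences via contractible fibres, that $\fst$ is a homotopy equivalence.

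For the converse direction (\ref{item:inf:coc2}) $\Rightarrow$ (\ref{item:inf:coc1}), I would start by using that $\fst$ is an equivalence to conclude that each fibre $T_a \defeq \sm{b:B(a)}C(a,b)$ is contractible with some centre $(b_a,c_a)$; this produces the required point $b_a : B(a)$. To obtain the required equivalence $C(a,b) \eqvsym \id b {b_a}$, I would build a map $\phi : T_a \to \sm{b:B(a)}\id b {b_a}$ that commutes with the first projection to $B(a)$, by sending $(b,c)$ to $(b,q)$ where $q$ is the first projection of the contraction path from $(b,c)$ to $(b_a,c_a)$. Both source and target of $\phi$ are contractible, hence $\phi$ is a homotopy equivalence, and since it commutes with the projections to $B(a)$ it is a fibrewise equivalence; its restriction to the fibre over $b : B(a)$ is the required equivalence $C(a,b) \eqvsym \id b {b_a}$.

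The main obstacle is essentially bookkeeping, since the argument is a variation on standard manipulations with $\Sigma$-types, contractibility, and the ``fibrewise equivalence iff total-space equivalence'' principle over a fixed base. One small subtlety is that in item (\ref{item:inf:coc1}) the variable $b$ is left free, so I would first clarify that the intended reading is $\prd{a:A}\sm{b_a:B(a)}\prd{b:B(a)}\bigl(C(a,b) \eqvsym \id b {b_a}\bigr)$, and ensure that the point $b_a$ constructed in the backward direction is genuinely independent of $b$, which is automatic from taking it to be the first projection of the centre of $T_a$.
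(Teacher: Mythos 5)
Your proposal is correct and follows essentially the same route as the paper: the forward direction reduces to the contractibility of singletons, and the backward direction extracts $b_a$ from the contractibility of $\sm{b:B(a)}C(a,b)$ and then applies the ``total-space equivalence iff fibrewise equivalence'' principle to a map between two contractible total spaces over $B(a)$. The only difference is presentational — you spell out the fibre-of-$\fst$ bookkeeping and the explicit construction of the comparison map, which the paper leaves implicit.
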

\begin{proof}
 The direction \implref{item:inf:coc1}{item:inf:coc2} is an obvious consequence from the contractibility of singletons. 
For the other direction, recall that, for any type $X$ and families $Y, Z : X \to \UU$, a map
\begin{equation} \label{eq:fibrewise-map}
 f : \prd{x:X} \left(Y(x) \to Z(x)\right)
\end{equation}
 is a \emph{fibrewise (homotopy) equivalence} if each $f(x) : Y(x) \to Z(x)$ is a homotopy equivalence~\cite[Chapter 4.7]{HoTTbook}.
Given~\eqref{eq:fibrewise-map}, there is a canonical way to define a map on the total spaces
\begin{equation}
 \mathsf{total}(f) : \sm{x:X}Y(x) \, \to \, \sm{x:X}Z(x).
\end{equation}
Then, $\mathsf{total}(f)$ is a homotopy equivalence if an only if $f$ is a fibrewise homotopy equivalence~\cite[Thm. 4.7.7]{HoTTbook}. 
Using this result, we derive a very short proof of~\implref{item:inf:coc2}{item:inf:coc1}:

We fix $a:A$ and assume~\eqref{item:inf:coc2} which implies that $\sm{b:B(a)}C(a,b)$ is contractible. 
This gives us the required $b_a$ and allows us to define a map
\begin{equation}
 g : \prd{b:B(a)} \left(C(a,b) \to \id{b_a}{b}\right).
\end{equation}
Clearly, $\mathsf{total}(g)$ is a homotopy equivalence as it is a map between contractible types.
Hence, $g$ is a fibrewise homotopy equivalence.
\end{proof}

We are now ready to show that, in the case of $n$-types, the higher ``fillers for complete boundaries'' become homotopically simpler and simpler, and finally trivial.
\begin{lemma} \label{lem:movenumbers}
 Let $n \geq -2$ be a number and $B$ be a type in $\ttfc$. 
 Consider the equality semi-simplicial type $\Ee{B} : \deltop \to \ttfc$ of $B$.
 For an object $\ordinal k$ of $\deltop$, we can consider the fibration $\Ee{B}_{\ordinal k} \fib M^{\Ee{B}}_{\ordinal k}$.
 We know that, by definition, the fibre over $m : M^{\Ee{B}}_{\ordinal k}$ is simply $\sm{x:B} \id{\tilde \eta_{\ordinal k}(x)}{m}$.
 
 If $B$ is an $n$-type, then, for any object $\ordinal k$ of $\deltop$, all these fibres are $(n-k)$-truncated (or contractible, if this difference is below $-2$). 
\end{lemma}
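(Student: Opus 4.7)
My plan is to proceed by induction on $k$. The base case $k = 0$ is direct: since $M^{\Ee{B}}_{\ordinal 0} \jdeq \unit$ and the identity type on $\unit$ is contractible, the fibre $\sm{x:B}\id{\ttt}{\ttt}$ is equivalent to $B$, and hence is $n$-truncated as required.

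For the inductive step with $k \geq 1$, first observe that $\tilde\eta_{\ordinal k}$ factors as $\iota_{\ordinal k} \circ \eta_{\ordinal k}$ with $\eta_{\ordinal k} : B \to \Ee{B}_{\ordinal k}$ a homotopy equivalence by \cref{lem:eta-is-equiv}, so the fibre of $\tilde\eta_{\ordinal k}$ at $m$ is equivalent to the fibre of $\iota_{\ordinal k}$ at $m$. I would then exploit the Reedy limit definition of $M^{\Ee{B}}_{\ordinal k}$: via function extensionality, an identity $\id{\tilde\eta_{\ordinal k}(x)}{m}$ unfolds into a compatible family $(p_f : \id{\eta_{\ordinal j}(x)}{\overline f(m)})_f$ indexed by the non-identity morphisms $f : \ordinal k \to \ordinal j$ in $\deltop$. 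Using that each $\eta_{\ordinal j}$ is an equivalence, each component identity rewrites to a path $\id{x}{y_f}$ in $B$ with $y_f \defeq \eta_{\ordinal j}^{-1}(\overline f(m))$; thus the fibre is equivalent to a type consisting of a point $x : B$ together with a compatible family of such paths.

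To extract the truncation level, I would single out the morphism $f_0 : \ordinal k \to \ordinal 0$ corresponding to the inclusion of the vertex $0$: the pair of $x$ and the component $p_{f_0} : \id{x}{y_{f_0}}$ forms a singleton which contracts, fixing $x \jdeq y_{f_0}$. What remains is a Reedy limit of path types in $B$ indexed by the co-slice $\ordinal k \sslash \deltop$ with $f_0$ removed; each such path type is $(n-1)$-truncated, and the indexing structure is arranged exactly like the matching-object diagram of the equality semi-simplicial type at level $\ordinal{k-1}$, but with paths in $B$ in place of points. Applying the induction hypothesis (one level down, on the $(n-1)$-truncated path space) yields that the residual limit is $(n-1) - (k-1) \jdeq n - k$ truncated; when $n - k$ falls below $-2$ each such path type is already contractible, forcing the limit to be contractible.

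The main technical obstacle lies in justifying that final combinatorial identification: that removing $f_0$ from the co-slice $\ordinal k \sslash \deltop$ yields a diagram of path types in $B$ which is, as a Reedy-fibrant diagram, the matching-object-diagram of the equality diagram at level $k-1$ on the $(n-1)$-truncated base of paths from $y_{f_0}$. This bookkeeping amounts to the kind of explicit coherence management that the paper deliberately abstracts away via the Reedy framework; unwinding the inductive definition of $M^{\Ee{B}}$ at this point is what underlies the remark in \cref{sec:equality-sst} that a strictly-formulated version of \cref{fig:e-m-ideal} is combinatorially delicate already at level $\ordinal 3$.
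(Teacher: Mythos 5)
Your base case and the overall inductive framing agree with the paper's, but the inductive step has a genuine gap at exactly the point you flag as ``the main technical obstacle'', and the identification you assert there is not merely unproven but appears false as stated. After contracting the singleton at the vertex component, what remains is indexed by $\ordinal k \sslash \deltop$ with one object removed --- a poset with $2^{k+1}-3$ objects --- whereas the matching-object diagram at level $\ordinal{k-1}$ is indexed by $\ordinal{k-1}\sslash\deltop$, which has only $2^{k}-2$ objects. So the residual limit is not ``arranged exactly like'' $M^{\Ee{B}}_{\ordinal{k-1}}$ with path types substituted for points, and there is no single based path space to which the induction hypothesis could be applied: the remaining components are paths between varying endpoints, stacked over one another as \emph{dependent} paths via transport, not the fibres of the equality diagram of any fixed type $P$. (A smaller quibble: unfolding $\id{\tilde\eta_{\ordinal k}(x)}{m}$ componentwise is governed by the characterisation of identity types of iterated $\Sigma$-types, not by function extensionality, and the ``compatibility'' is a dependency structure rather than a side condition.)

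The paper obtains the same one-level drop per step without this bookkeeping, via two moves your proposal is missing. First, \cref{lem:continuous-functor} gives a pullback square showing that the fibration $M^{\Ee{B}}_{\ordinal{k+1}} \fib \lim_{\overline{\ordinal{k+1}}_{-0}}(\Ee{B}\circ U)$ (matching object over the $0$-th horn) has the same fibres as $\Ee{B}_{\ordinal k}\fib M^{\Ee{B}}_{\ordinal k}$, hence $(n-k)$-truncated fibres by the induction hypothesis. Second, since the composite $\Ee{B}_{\ordinal{k+1}}\fib M^{\Ee{B}}_{\ordinal{k+1}}\fib \lim_{\overline{\ordinal{k+1}}_{-0}}(\Ee{B}\circ U)$ is a homotopy equivalence by \cref{mainlem:horn-filler-trivial}, \cref{mainlem:singletons-everywhere} identifies the fibre of $\Ee{B}_{\ordinal{k+1}}\fib M^{\Ee{B}}_{\ordinal{k+1}}$ with a based path type inside an $(n-k)$-truncated fibre, hence it is $(n-k-1)$-truncated. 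Your singleton contraction at the single vertex $\{0\}$ is a shadow of this second move, but it is applied to too small a piece of the boundary; the contraction must be performed against the entire horn at once for the combinatorics to close up, and that is precisely what the horn-filler lemma packages for you.
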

\begin{remark}
 The other direction of \cref{lem:movenumbers} should also hold, as $M^{\Ee{B}}_{\ordinal k}$ should be equivalent to $\sm{b:B}\Omega^{k}(B,b)$. We do neither prove nor require this direction here.
\end{remark}
\begin{proof}[Proof of \cref{lem:movenumbers}]
 The statement clearly holds for $\ordinal k \jdeq \ordinal 0$, as the matching object $M^{\Ee{B}}_{\ordinal k}$ will in this case be the unit type.
 We assume that the statement holds for $\ordinal k$ and show it for $\ordinal {k+1}$.
 Recall our notation from \cref{sec:equality-sst} (see right before \cref{mainlem:horn-filler-trivial}): 
 If $s$ is some set, we write $\overline s$ for the poset generated by $s$. If $i$ is an element of $s$, then $\overline{s}_{-i}$ is the poset $\overline s$ without the set $s$ and without the set $s-i$.
 
 Consider the following diagram in the poset $\mathsf{Sub}(\ordinal{k+1})$:
 \begin{center}
  \begin{tikzpicture}[align=left, node distance=3.0cm]
  \node [](M1) {$\overline{\ordinal{k+1}} - \ordinal{k+1}$};
  \node [below of=M1, node distance = 1.2cm](H) {$\overline{\ordinal{k+1}}_{-0}$}; 
  \node [right of=M1](E) {$\overline{\ordinal{k}}$}; 
  \node [right of=H](M2) {$\overline{\ordinal{k}} - \ordinal{k}$}; 
  
  \draw[->] (M1) to node {} (H);
  \draw[->] (M1) to node {} (E);
  \draw[->] (H) to node {} (M2);
  \draw[->] (E) to node {} (M2);
  \end{tikzpicture}
 \end{center}
 If we apply the functor $\lim_{-} (\Ee{B} \circ U)$ on this square, we get
 \begin{center}
  \begin{tikzpicture}[align=left, node distance=3.0cm]
  \node [](M1) {$M^{\Ee{B}}_{\ordinal {k+1}}$}; 
  \node [below of=M1, node distance = 1.2cm](H) {$\lim_{\overline {\ordinal {k+1}}_{-0}}(\Ee{B} \circ U)$}; 
  \node [right of=M1](E) {$\Ee{B}_{\ordinal{k}}$}; 
  \node [right of=H](M2) {$M^{\Ee{B}}_{\ordinal{k}}$}; 
  
  \draw[->>] (M1) to node {} (H);
  \draw[->>] (M1) to node {} (E);
  \draw[->>] (H) to node {} (M2);
  \draw[->>] (E) to node {} (M2);
  \end{tikzpicture}
 \end{center}
 where the bottom left type is the $0$-th $\ordinal k$-horn as in \cref{mainlem:horn-filler-trivial}.
 By the induction hypothesis, the right vertical fibration is an $(n-k)$-truncated type. By \cref{lem:continuous-functor}, the square is a pullback.
 This means that the left vertical fibration is $(n-k)$-truncated as well, as fibres on the left side are homotopy equivalent to fibres on the right side. 
 
 Consider the composition of fibrations
 \begin{equation}
  \Ee{B}_{\ordinal {k+1}} \fib M^{\Ee{B}}_{\ordinal{k+1}} \fib \lim_{\overline {\ordinal {k+1}}_{-0}}(\Ee{B} \circ U).
 \end{equation}
 Intuitively, the horn is a ``tetrahedron with missing filler and one missing face'', the matching object is the same plus one component which represents this face, and $\Ee{B}_{\ordinal k}$ has, in addition to the face, also a filler of the whole boundary. The filler is really the statement that the ``new'' face equals the canonical one, and we can now make this intuition precise by applying \cref{mainlem:singletons-everywhere}.
 Let us check the conditions:
 \begin{itemize}
  \item  Certainly, we can write the sequence in the form
 \begin{equation}
  \sm{x:X}\sm{x:Y(x)}Z(x,y) \fib \sm{x:X} Y(x) \fib X
 \end{equation}
 (this is given by \cref{lem:sub-gives-fibs}).
  \item The composition is a homotopy equivalence by \cref{mainlem:horn-filler-trivial}.
 \end{itemize}
 Thus, we can assume that $Z(x,y)$ is equivalent to $\id[Y(x)]{y}{y_x}$ for some $y_x$, and thereby of a truncation level that is by one lower than $Y(x)$. But the latter is $(n-k)$ as we have seen above.\footnote{On low levels, we can consider the situation in terms of the presentation in \cref{fig:e-m-ideal}. Here, $y_x$ will be the ``missing face'' that one gets by gluing together the other faces.}
\end{proof}

As a corollary, we get the case for $\ordinal k \jdeq \ordinal{n+2}$:
\begin{corollary} \label{cor:infconstancy-finite}
 Let $B$ be an $n$-type. 
 Then, the fibration
 \begin{equation}
  \Ee{B}_{\ordinal {n+2}} \fib M^{\Ee{B}}_{\ordinal {n+2}}
 \end{equation}
 is a homotopy equivalence. \qed
\end{corollary}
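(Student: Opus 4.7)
The corollary should be an almost immediate consequence of the preceding \cref{lem:movenumbers}. My plan is simply to instantiate that lemma at $\ordinal k \jdeq \ordinal{n+2}$, which yields that the fibres of
\begin{equation*}
 \Ee{B}_{\ordinal{n+2}} \fib M^{\Ee{B}}_{\ordinal{n+2}}
\end{equation*}
over any $m : M^{\Ee{B}}_{\ordinal{n+2}}$ are $(n-(n+2))$-truncated, i.e.\ $(-2)$-truncated, which (by the convention stated in \cref{lem:movenumbers}) means they are contractible.

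It then remains to observe that a fibration whose fibres are all contractible is itself a homotopy equivalence. In the type-theoretic internal language, this is the fact that if every fibre $F(m)$ of the projection $\bigl(\sm{m : M^{\Ee{B}}_{\ordinal{n+2}}} F(m)\bigr) \to M^{\Ee{B}}_{\ordinal{n+2}}$ is contractible, then the projection is an equivalence: function extensionality gives a global section, and the total space is equivalent to the base. Concretely, the fibre here is $\sm{x:B} \id{\tilde\eta_{\ordinal{n+2}}(x)}{m}$, and being $(-2)$-truncated means it is inhabited and propositional, hence contractible.

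I do not foresee any genuine obstacle: the work is entirely absorbed into \cref{lem:movenumbers}, and the translation from ``fibres are contractible'' to ``the fibration is a homotopy equivalence'' is a standard fact in a type-theoretic fibration category satisfying function extensionality (which was fixed as a standing assumption for $\ttfc$ in \cref{sec:ttfcs}). The only minor point to be careful about is the edge case $n \jdeq -2$, where $\ordinal{n+2} \jdeq \ordinal 0$; but there the matching object is $\unit$ and the statement reduces to the fact that $\Ee{B}_{\ordinal 0} \arrowsim \unit$, which follows since $B$ is itself contractible and $\Ee{B}_{\ordinal 0} \arrowsim B$ by \cref{lem:eta-is-equiv}.
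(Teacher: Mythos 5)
Your proposal is correct and matches the paper's (implicit) argument exactly: the corollary is stated as an immediate consequence of \cref{lem:movenumbers} at $\ordinal k \jdeq \ordinal{n+2}$, where the fibres become $(-2)$-truncated, i.e.\ contractible, and a fibration with contractible fibres is a homotopy equivalence given function extensionality. Your remark on the edge case $n \jdeq -2$ is consistent with the lemma's base case and causes no difficulty.
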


We are now in the position to formulate our result for $n$-types with finite $n$.
Recall from \cref{not:ton} that we write $A \ton {n} B$ for $\Nn_{n}$.
\begin{theorem}[Finite general universal property of the propositional truncation] \label{thm:cor}
 Let $n$ be a fixed number, $-2 \leq n < \infty$. 
 In Martin-L\"of type theory with propositional truncations and function extensionality we can, for any type $A$ and any $n$-type $B$, derive a canonical function
 \begin{equation}
  \big(\proptrunc A \to B\big) \; \to \; \big(A \ton {n+1} B\big)
 \end{equation}
 that is a homotopy equivalence. 
\end{theorem}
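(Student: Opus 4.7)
The plan is to adapt the proof of \cref{thm:result} to the finite case, using \cref{lem:movenumbers} to see that the tower of coherence conditions collapses above level $n+1$. Concretely, I would first define the canonical map $s_{n+1} : B \to (A \ton{n+1} B)$ as the restriction of the cone $\gamma$ from \cref{def:canonical-function} to level $\ordinal{n+1}$, and then define the desired function $(\proptrunc A \to B) \to (A \ton{n+1} B)$ by $f \mapsto \lam x s_{n+1}(f(x))$, using the universal property of the propositional truncation as in the proof of \cref{lem:inf-lastBeforeMain}.

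The key observation is that \cref{lem:movenumbers} implies that, for an $n$-type $B$ and any $k \geq n+2$, the fibre $\sm{x:B}\id{\tilde\eta_{\ordinal k}(x)}{m}$ over every $m : M^{\Ee{B}}_{\ordinal k}$ is contractible. By function extensionality, the fibration $\Nn_{\ordinal k} \fib \Nn_{\ordinal{k-1}}$ is therefore an acyclic fibration for all $k \geq n+2$, so that truncating the tower at level $n+1$ loses no information up to homotopy equivalence. In particular, $A \ton{n+1} B$ is a genuine finite iterated $\Sigma$-type that can be written in the syntax of standard HoTT.

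To prove that the constructed function is a homotopy equivalence, I would replay the proof of \cref{lem:main} with the index category $\truncCat\opccat$ replaced by its finite full subcategory of those $\cob m j$ with $m \leq n+1$. The two sequences $D_i$ and $D_i'$ from that proof now become finite sequences that terminate once every such object has been included, so the appeal to \cref{lem:acyclic_limit} is replaced by an ordinary finite composition of acyclic fibrations; \cref{cor:main} still applies at each step. The same two-out-of-three argument yields that $s_{n+1}$ is a homotopy equivalence when considered as a morphism in the slice over $A$. The analogs of \cref{lem:inf-lastBeforeMain} and \cref{lem:ignore-trunc-A}, both of which involve only finitely nested $\Sigma$-types, then conclude the proof just as in \cref{thm:result}.

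The main obstacle is the careful bookkeeping required to verify that, when restricted to the finite subcategory of $\opccat$, none of the constructions of \cref{sec:equality,sec:equality-sst,sec:nat-trans,sec:extending-sst,sec:maintheorem} invoke any infinite limits, so that the proof genuinely takes place in standard syntactic HoTT without any Reedy $\oppo\omega$-limit assumption. Beyond this, the degenerate cases $n \jdeq -2$ and $n \jdeq -1$ are handled by the convention $A \ton{-1} B \defeq \unit$ and the classical universal property, respectively.
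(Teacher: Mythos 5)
Your proposal is correct and follows essentially the same route as the paper: both use \cref{lem:movenumbers} (via \cref{cor:infconstancy-finite}) to see that the fibrations $\Nn_{\ordinal{k}} \fib \Nn_{\ordinal{k-1}}$ are acyclic for $k \geq n+2$, truncate all diagrams to the finite subcategory $\Set{\cob k i | k \leq n+1}$, and replay the argument of \cref{lem:main} with finitely nested $\Sigma$-types, concluding via \cref{lem:ignore-trunc-A} instead of the infinite-limit argument of \cref{thm:result}. Your version merely spells out more explicitly the bookkeeping that the paper leaves as ``it is easy to see.''
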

\begin{proof}
 Looking at \cref{cor:infconstancy-finite} and at the definition of $\Nn$, as given in \cref{sec:nat-trans}, we see immediately that each $\Nn_{\ordinal {k+1}} \fib \Nn_{\ordinal k}$ with $k \geq n+1$
 is a homotopy equivalence. Thus, using \cref{lem:acyclic_limit}, the Reedy limit $\lim_{\truncCat\deltop}\Nn$ is equivalent to $\Nn_{\ordinal{n+1}}$, and these are $A \toomega B$ and $A \ton {n+1} B$ by definition.
 Similarly, the limit $\lim_{\truncCat\opccat}\NN$ (which we used in the proof of \cref{lem:main}) is homotopy equivalent to the limit over $\truncCat\opccat$ restricted to $\Set{\cob k i | k \leq n+1}$. 
 It is easy to see that the whole proof can be carried out using only finite parts of the infinite diagrams.
 But then, of course, all we need are finitely many nested $\Sigma$-types instead of Reedy $\oppo\omega$-limits, and these automatically exist.
 Further, the only point where we crucially used the judgmental $\eta$-rule for $\Sigma$ is the proof of \cref{thm:result}. In the finite case, however, this is not necessary, as \cref{lem:ignore-trunc-A} is sufficient (similarly, the judgmental $\eta$-rule for $\Pi$-types is not necessary).
 Therefore, the whole proof can be carried out in the standard version of MLTT with propositional truncations.
\end{proof}

\section{Concluding Remarks} \label{sec:conclusions}
 
For any type $B$, we have constructed the equality semi-simplicial type, written $\Ee{B} : \deltop \to \ttfc$, and we have shown that natural transformations from the \emph{trivial} diagram $\Aa{A}$ (the $\ordinal 0$-coskeleton of the diagram constantly $A$) to $\Ee{B}$ correspond to maps $\proptrunc A \to B$.
The construction required us to assume that $\ttfc$ has Reedy $\oppo\omega$-limits.
There are several points that we would like to discuss briefly here, all of which naturally raise further open questions.

First, there are many connections to constructions and results in homotopy theory and the theory of higher topoi, model categories, and quasi-categories.
As we have discussed, for any type $B$ and any inverse category $\I$ that is admissible for $\ttfc$, the constructed equality diagram $\Ee{B} : \I \to \ttfc$ is a Reedy fibrant replacement of the diagram that is constantly $B$. Similarly, the diagram $\Aa{A}$ is a $\ordinal 0$-coskeleton.
One anonymous reviewer has pointed out that \cref{thm:result} is a type-theoretic version of a result on $(\infty,1)$-topoi by Lurie~\cite[Proposition 6.2.3.4]{lurie_topos}.
There are certainly deep connections that have yet to be explored.

Second, we have presented the assumptions of Reedy $\oppo\omega$-limits as a necessary requirement. 
However, we are not aware of a model in which the necessary limits are absent. 
Even though it seems very unlikely, it is in principle possible that these Reedy limits exist in any type-theoretic fibration category automatically.
Assume $A$ and $B$ are some given types.
We do not know whether it is possible to define
the expression $\Nn_{A,B}(n)$ for a \emph{variable} $n$ in HoTT, i.e.\ to give a function $f_{A,B} : \N \to \UU$ (where $\UU$ is a universe) such that the type $f_{A,B}(n)$ is equivalent to $\Nn_{A,B}(n)$ for all \emph{numerals} $n$. 
If this can be done, it should be possible to actually construct what is intuitively an ``infinite $\Sigma$-type'', by asking for all finite approximations with proofs that they fit together, and we could reasonably hope that \cref{thm:result} can be proved in HoTT without any further assumptions.
This has been made precise for the more general case of $M$-types by 
Ahrens, Capriotti and Spadotti~\cite{paolo_nonwellfounded}.
However, we do not expect that such a function $f_{A,B}$ can be defined.
This is at least as hard as defining the equality semi-simplicial type over $B$ as a function $\Ee{B} : \N \to \UU$; this would correspond to the special case where $A$ is the unit type. 
Defining $\Ee{B}$ in this way, however, seems to be as difficult as the famous open problem of defining semi-simplicial types internally as a function $\mathsf{SS} : \N \to \UU_1$ (where $\UU_1$ is a universe that is larger than $\UU$). 
The two problems are identical apart from the fact that the fibres over the matching objects differ.
For $\Ee{B}$, the fibre over a point $m$ of the matching object is $\sm{x:B} \id{\tilde \eta_i(x)}{m}$ as can be seen from~\eqref{eq:define-E-by-fibres} on page~\pageref{eq:define-E-by-fibres}, while for $\mathsf{SS}$, the fibres are constantly the universe $\UU$.
The author does not expect that this makes a real difference in difficulty.
It seems likely that a function $\Ee{B}$ would enable us to talk about coherent equalities so that we could define the function $\mathsf{SS}$, implying that defining $\Ee{B}$ is at least not easier.

Going back a step, while we can prove \cref{thm:cor} internally if $n$ is instantiated with any numeral, we conjecture that it is impossible to prove it for a variable $n$. 
What we think is certainly possible is to write a program in any standard programming language that takes a number $n$ as input and prints out the formalised statement of \cref{thm:cor} (in the syntax of a proof assistant such as Coq or Agda) together with a proof. 
Even in Agda itself, we would be able to define a function which generates the Agda source code of \cref{thm:cor}, for any natural number $n$.
This would provide a solution if we were able to interpret syntax of HoTT in HoTT, which is another famous open problem~\cite{shulman:eating}.

Third, instead of asking whether HoTT allows us to define Reedy fibrant diagrams such as $\Ee{B}$ or $\mathsf{SS}$, we may choose to work in a theory in which we know that it is possible.
Candidates are Voevodsky's HTS (homotopy type system)~\cite{voe_hts}, or the two-level system outlined by Altenkirch, Capriotti and the current author~\cite{altenkirch_twolevels}.
We believe that the results of the current article can be formalised in such settings.

Fourth, is seems obvious to ask whether statements analogous to \cref{thm:result,thm:cor} can be derived for higher truncation operators, written $\trunc n -$~\cite[Chapter 7.3]{HoTTbook}.
A partial result, namely a characterisation of maps $\trunc k A \to B$ if $B$ is $(k+1)$-truncated, have been obtained by Capriotti, Vezzosi and the current author~\cite{capKraVez_elimTruncs}. 

More general results are currently unknown, but we want to conclude with a conjecture.
Assume a type $A$ and an object $\ordinal k$ of $\deltplus$ are given.
We define the \emph{(fibrant) $\ordinal k$-skeleton} of the diagram that is constantly $A$, written $\coskel k A {}$, by giving the fibres over the matching objects:
\begin{equation}
  \coskel k A {\ordinal i} \defeq \begin{cases}
                          \Ee{A}_{\ordinal i} & \mbox{if $\ordinal i \preceq \ordinal k$} \\[4pt]
                          M^{\coskel k A {}}_{\ordinal i} & \mbox{else}.
                         \end{cases}
\end{equation}
Note that, with this definition, the diagram $\Aa{A}$ that we have defined earlier is not \emph{exactly} $\coskel 0 A {}$ for the same reason as for which $\Ee{A}_{\ordinal 0}$ is not \emph{exactly} $A$, but of course, $\coskel 0 A {}$ and $\Aa{A}$ are homotopy equivalent. 
In principle, we could have done the whole proof with $\coskel 0 A {}$ instead of $\Aa{A}$.
Merely for convenience, we have taken advantage of the fact that $\Aa{A}$ is already Reedy fibrant.

For $n \geq -1$, we conjecture that natural transformations from $\coskel {n+1} A {}$ to $\Ee{B}$ correspond to functions $\trunc n A \to B$.
Even more generally, given a higher inductive type $H$, it may be possible to determine a representation of $H$ as a diagram $\repr H : \deltop \to \ttfc$
such that natural transformations from $\repr H$ to $\Ee{B}$ corresponds to functions $H \to B$.
This is very simple for non-recursive higher ``inductive'' types that do not refer to $\refl{}$ or applications of $J$ in their constructors: for example, the circle $\Sn^1$ can be represented with $\repr {\Sn^1}_{\ordinal 0} \jdeq \repr {\Sn^1}_{\ordinal 1} \jdeq \unit$ and $\repr {\Sn^1}_{\ordinal {n+2}} \jdeq \emptyt$, while the suspension of $A$ can be realised as $\repr{\Sigma A}_{\ordinal 0} \jdeq \bool$, $\repr{\Sigma A}_{\ordinal 1} \jdeq A$, and $\repr{\Sigma A}_{\ordinal {n+2}} \jdeq \emptyt$.
If this turns out to work for a larger class of higher inductive types, it may be understood as a type-theoretic version of the \emph{homotopy hypothesis} which has so far suffered from the difficulty of formulating the coherences of categorical laws~\cite{hirschowitz_wild}.

\vspace*{0.5cm}

\subparagraph*{\textbf{Acknowledgements.}}

First of all, I want to thank Paolo Capriotti. It was him who pointed out to me that what I was doing could be formulated nicely in terms of diagrams over inverse categories, and he helped me to transfer my original definition of the equality semi-simplicial type to this setting.
Numerous discussions with him helped me greatly to understand Shulman's work and related concepts.
He, as well as Christian Sattler and the anonymous reviewers, has enabled me to understand connections with homotopy theory that I had not been aware of.
Especially the reviewers' reports have been helpful for the improvements that have been incorporated in this article.
Steve Awodey, as examiner of my Ph.D.\ thesis, has given me very useful feedback on this work as well.
I am grateful to Nils Anders Danielsson, who has formalised the propositions of \cref{sec:motivation}, and who has pointed out an issue with \cref{lem:ignore-trunc-A} in an earlier version of this article.

I further want to thank Thorsten Altenkirch for his constant general support, Mart{\'i}n Escard{\'o} for many discussions on related questions, and Michael Shulman for making his LaTeX macros publicly available.

\bibliographystyle{plain}
\bibliography{refs_infconstancy}

\end{document}